\newtheorem{theointro}{Theorem}
\newtheorem{theo}{Theorem}[section]}
\theoremstyle{definition} \newtheorem{defin}[theo]{Definition}
							\newtheorem{prop}[theo]{Proposition}
							\newtheorem{lemme}[theo]{Lemma}}
\theoremstyle{remark} \newtheorem{remarque}[theo]{Remark}
						\newtheorem{exemple}[theo]{Example}}
\newcommand{\Id}{Id}	
\newcommand{\Set}{\text{Set}}
\newcommand{\op}{op}
\newcommand{\sS}{\text{sSet}}
\newcommand{\Hom}{\text{Hom}}
\newcommand{\Top}{\text{Top}}
\newcommand{\Real}[1]{{}||#1||}
\newcommand{\RealP}[1]{{}\Real{#1}_P}
\newcommand{\Sing}{\text{Sing}}
\newcommand{\pr}{pr}
\newcommand{\colim}{\operatornamewithlimits{colim}}
\newcommand{\Fun}{\text{Fun}}
\newcommand{\fil}[1]{(#1,\varphi_{#1})}
\newcommand{\C}{\mathcal{C}}
\newcommand{\D}{\mathcal{D}}
\newcommand{\N}{\mathbb{N}}
\newcommand{\Diag}{\text{Diag}}
\newcommand{\Colim}{\text{Colim}}
\newcommand{\RealNP}[1]{\Real{#1}_{N(P)}}
\newcommand{\Int}{\text{Int}}
\newcommand{\Strat}{\text{Strat}}
\newcommand{\StrStrat}{\text{StrStrat}}
\newcommand{\CP}{\mathcal{C}^0_P}
\newcommand{\CNP}{\mathcal{C}^0_{N(P)}}
\newcommand{\TopNP}{\Top_{N(P)}}
\renewcommand{\Im}{\text{Im}}
\title{Homotopy theory of stratified spaces}
\author{Sylvain Douteau}
\begin{document}
\begin{abstract}
In this article, we construct a cofibrantly generated model structure on the category of spaces stratified over a fixed poset, and show that it is Quillen-equivalent to a category of diagrams of simplicial sets. Then, considering all those model structures together, we construct a cofibrantly generated model structure on the category of all stratified spaces. In both model categories, weak-equivalences are characterized by stratified homotopy groups.
\end{abstract}
\maketitle

\tableofcontents
\section*{Introduction}

Stratified spaces are omnipresent in geometry and topology whenever dealing with singular objects. Stratifications were first defined by H. Whitney \cite{WhitneyStratification} for algebraic varieties, and R. Thom \cite{ThomEnsemblesMorphismesStratifies} for analytic varieties. Both defined a stratification as a decomposition of a variety $X$ into smooth pieces called strata, $X=\coprod_{p\in P}X_p$, together with some gluing conditions. 
The study of stratified spaces developed through the generalization of topological invariants of manifolds to the stratified setting. Among those invariants, intersection cohomology - introduced in 1980 by M. Goresky and R. MacPherson in \cite{IntersectionHomologyI} and \cite{IntersectionHomologyII} - is one of the first, and probably the most well-known. It is a "cohomology theory" suited for the study of stratified spaces and for which pseudo-manifolds - such as those defined by R. Thom and H. Whitney - satisfy Poincaré duality. Since then, numerous other invariants have been defined for stratifed spaces, such as 
\begin{itemize}
\item The category of exit-paths, and its higher-categorical generalizations, first defined by MacPherson (unpublished) and studied by Treuman \cite{Treumann}, J. Woolf \cite{WoolfFundamentalCategory} and J. Lurie \cite{HigherAlgebra}.
\item Singular chains and cochains complexes carrying cap and cup products and computing intersection (co)homology have been defined by H.King \cite{King} and G. Friedman and J. McClure \cite{FriedmanMcClure}, \cite{Friedman}.
\item A rational homotopy theory has been developed by D. Chataur, M. Saralegui and D. Tanré \cite{DavidMemoire}.
\item Generalized intersection homology theories have been proposed by M. Banagl \cite{BanaglRationalGeneralizedHomology}.
\end{itemize}
All those invariants are compatible with strata-preserving homotopy equivalences, though they are not preserved by regular homotopy equivalences. This begs the question - already asked by Goresky and MacPherson about intersection cohomology \cite[Problem \#4]{Borel}.
What is the corresponding homotopy category of stratified spaces that factor those invariants?

Several partial answers have been given toward the resolution of this problem.
D. Miller \cite{Miller}, gave a characterization of strata-preserving homotopy equivalences for the class of homotopically stratified spaces defined by Quinn \cite{Quinn}. Later, in his thesis, S. Nand-Lal \cite{NandLal} defined a partial model structure on the subcategory of "fibrant stratified spaces" using an adjunction with the category of simplicial sets. In another direction, both P.Haine \cite{Haine} and the author \cite{ArticleMoi} independently defined model structures on a category of filtered simplicial sets. All those results allowed to further the understanding of the homotopy theory of stratified spaces, but a description of a model structure on the entire category of stratified spaces was still missing.

The goal of this paper is to construct and characterize such a model structure. As is classical, we define a stratified space as a topological space $X$ together with a continuous map to a partially ordered set of strata $\varphi_X\colon X\to P$ (see for example \cite[Definition A.5.1]{HigherAlgebra}). We use the stratified homotopy groups, a stratified homotopy invariant introduced in \cite{ArticleMoi}, to characterize stratified weak-equivalences and prove the following (Theorem \ref{TheoremCMFStrat}).

\begin{theointro}\label{TheointroStrat}
The category of stratified spaces admits a cofibrantly generated model structure in which a map $f\colon (X\to P)\to (Y\to Q)$ is a weak-equivalence if and only if it induces isomorphisms on all stratified homotopy groups.
\begin{equation*}
s\pi_n(f)\colon s\pi_n((X\to P),\phi)\to s\pi_n((Y\to Q),f\circ\phi)\circ \widehat{f}
\end{equation*}
\end{theointro}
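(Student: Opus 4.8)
The plan is to obtain the model structure by transfer along an adjunction with a category of filtered simplicial sets, in two stages: first over a fixed poset, then globally.

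First I would fix a poset $P$ and consider the adjunction $\RealP{-}\dashv\Sing_P$ between the category of filtered simplicial sets over $P$ — equipped with the model structure of \cite{ArticleMoi} (equivalently, a suitable category of $P$-diagrams of simplicial sets, $\sS^P$) — and the category $\Top_P$ of spaces stratified over $P$. I would apply Kan's transfer theorem. The domains of the generating (trivial) cofibrations of $\FStrat_P$ realize to compact stratified spaces, hence are small in $\Top_P$, where all colimits are computed on underlying spaces; so the only real point is the acyclicity condition, namely that every relative cell complex built from $\RealP{J}$ is sent by $\Sing_P$ to a weak equivalence. I expect to handle this exactly as in the classical topological case: each generating trivial cofibration of $\FStrat_P$ realizes to a stratified inclusion admitting a strong stratified deformation retraction, this property is stable under pushout and transfinite composition by an explicit homotopy, and such maps are in particular stratified homotopy equivalences, hence weak equivalences. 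This yields a cofibrantly generated model structure on $\Top_P$ in which $f$ is a weak equivalence (resp. fibration) precisely when $\Sing_P(f)$ is one in $\FStrat_P$; the adjunction is then a Quillen adjunction, promoted to a Quillen equivalence by checking that the unit $X\to\RealP{\Sing_P X}$ is a weak equivalence (a stratified analogue of the Milnor comparison) and the counit is (from \cite{ArticleMoi}).

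Next I would identify the weak equivalences intrinsically. By \cite{ArticleMoi} the weak equivalences of $\FStrat_P$ are detected by simplicial stratified homotopy groups, and for a stratified space $X$ these agree with the topological stratified homotopy groups $s\pi_n$, since the latter are defined by mapping stratified simplices into $X$ and hence factor through $\Sing_P$. Combined with the previous step, $f$ is a weak equivalence in $\Top_P$ if and only if it induces isomorphisms on all $s\pi_n$.

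Finally I would assemble these fibrewise structures into a model structure on the whole category $\Strat$ of stratified spaces, viewed as the Grothendieck construction of the functor $P\mapsto\Top_P$ over the category $\Poset$ of posets. The strategy is again transfer, now along $\RealFStrat{-}\dashv\Sing^{\Strat}$ from the category $\FStrat$ of filtered simplicial sets with varying base (itself endowed with a model structure, obtained by an analogous global transfer). One takes as generating (trivial) cofibrations a set of representatives — which suffices because all cells are supported on simplices stratified over posets with boundedly many elements — together with maps effecting change of the base poset. Smallness is as before, and the crux, the step I expect to be hardest, is the acyclicity condition once pushouts along generating trivial cofibrations may alter the indexing poset: one must check these pushouts are weak equivalences in the strong sense of Theorem~\ref{TheointroStrat}, i.e.\ isomorphisms on all $s\pi_n$ \emph{together with} the induced comparison $\widehat f$ of posets. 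The fixed-poset deformation-retract argument no longer applies verbatim, and the genuinely new input is control of stratified homotopy groups under change of base. I would isolate a lemma asserting that $\Sing^{\Strat}$ preserves the filtered colimits occurring in cell attachments and that the base functor $\Strat\to\Poset$ is compatible with them, so that acyclicity reduces fibre by fibre to the already-established fixed-poset case. Together with the small object argument this produces the cofibrantly generated model structure on $\Strat$, and the characterization of its weak equivalences follows from the fibrewise characterization above.
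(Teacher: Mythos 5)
Your fixed-poset step is a legitimate variant of the paper's. The paper does not transfer from $\sS_P$ directly: it transfers the projective model structure from the diagram category $\Diag_P=\Fun(R(P)^{\op},\sS)$ to the strongly filtered category $\Top_{N(P)}$ and only then to $\Top_P$, and it verifies acyclicity not by deformation retractions of cells but by a mapping-path-space factorization (Lemma \ref{LemmeFactorisationRetract}) feeding into the accessible-transfer criterion of \cite[Corollary 3.3.4]{Hess}, which only requires showing that maps with the left lifting property against all fibrations are weak-equivalences. Your route (classical Kan transfer, stratified deformation retracts stable under pushout and transfinite composition, then comparison of simplicial and topological stratified homotopy groups) is more work but plausible; the difference is one of technique, not substance, and for Theorem~\ref{TheointroStrat} itself the Quillen equivalence you prove along the way is not even needed.

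The global step is where there is a genuine gap. The paper assembles the fiberwise structures by viewing $\Strat\to\text{Poset}$ as a Grothendieck bifibration and applying \cite[Theorem 4.2]{Cagne}; the entire difficulty is concentrated in that theorem's hypothesis $(Q)$, proved as Proposition \ref{PropositionQETopPTopQ}: for every map of posets $\alpha\colon P\to Q$, the base-change adjunction $\alpha_*\colon\Top_P\leftrightarrow\Top_Q\colon\alpha^*$ is a Quillen adjunction, and a Quillen equivalence exactly when $\alpha$ is an isomorphism. This is what makes factorization and lifting work for maps that change the poset (every $f$ factors through its cocartesian and cartesian lifts as $f_{\triangleright}$ and $f^{\triangleleft}$, see Proposition \ref{PropositionTriangleLeftRight}), and the ``only if $\alpha$ is an isomorphism'' half is precisely where the condition that $\widehat{f}$ be an isomorphism in the characterization of weak-equivalences comes from (Lemma \ref{LemmeFTriangleLeftWeakEquivalence}). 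Your proposal never engages with this. Transferring instead from a global category $\FStrat$ whose model structure is ``obtained by an analogous global transfer'' defers the same gluing problem rather than solving it; and your proposed acyclicity lemma (that $\Sing^{\Strat}$ preserves the relevant colimits and that the base functor is compatible with them) addresses colimit bookkeeping, not the real obstruction. Concretely, to reduce a pushout along a generating trivial cofibration attached via a map $g$ with $\widehat{g}=\beta$ to a single fiber, you must push the cell forward along $\beta$ and know that $\beta_*$ still yields a trivial cofibration there --- which fails to be a generating one when $\beta\circ\varphi$ is not injective, and requires the retract argument of Lemma \ref{LemmeRetractNonDegenerateSimplex}. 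Without isolating and proving this base-change compatibility, the ``fibre by fibre'' reduction is not available, and the proof does not close.
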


In order to prove Theorem \ref{TheointroStrat}, we first work over a fixed poset $P$, and construct a model structure on the category of spaces filtered over $P$, $\Top_P$ (Theorem \ref{TheoCMFTopP}).

\begin{theointro}\label{TheoIntroTopNP}
There exists a cofibrantly generated model structure on $\Top_P$ in which a map $f\colon (X\to P)\to (Y\to P)$ is a weak-equivalence if and only if it induces isomorphisms on all stratified homotopy groups.
\begin{equation*}
s\pi_n(f)\colon s\pi_n((X\to P),\phi)\to s\pi_n((Y\to P),f\circ\phi)
\end{equation*}
\end{theointro}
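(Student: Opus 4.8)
\emph{Proof idea.} The plan is to obtain the model structure on $\Top_P$ by transferring, along a realization--singular adjunction, the cofibrantly generated model structure on the category $\D$ of $P$-filtered simplicial sets constructed in \cite{ArticleMoi} (the diagram category referred to in the abstract), and then to re-express the transferred weak equivalences through stratified homotopy groups. First I would set up the adjunction
\begin{equation*}
\RealP{\cdot}\colon \D \rightleftarrows \Top_P \colon \Sing_P ,
\end{equation*}
where $\Top_P = \Top/P$ with $P$ carrying the Alexandrov topology, so that $\Top_P$ is complete and cocomplete; the left adjoint $\RealP{\cdot}$ is the stratified geometric realization, sending the representable attached to a flag $p_0\le\cdots\le p_n$ of $P$ to the topological simplex $\Real{\Delta^n}$ equipped with its tautological stratifying map to $P$; and $\Sing_P(X\xrightarrow{\varphi_X}P)$ is the filtered simplicial set whose simplices are the continuous filtered maps from the stratified simplices into $X$. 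Adjointness follows from the coend presentation of $\RealP{\cdot}$.

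Next I would invoke Kan's transfer criterion for cofibrantly generated model structures applied to this adjunction. Writing $I$ and $J$ for generating cofibrations and generating trivial cofibrations of $\D$, it suffices to check: (i) the domains of $\RealP{I}$ and $\RealP{J}$ are small in $\Top_P$, which holds once one works in a convenient category of topological spaces (compactly generated, or $\Delta$-generated), since the stratified simplices involved are compact; and (ii) the acyclicity condition, that every map obtained from $\RealP{J}$ by cobase change and transfinite composition is sent by $\Sing_P$ to a weak equivalence of $\D$. Granting (i) and (ii), $\Top_P$ acquires a cofibrantly generated model structure with generating cofibrations $\RealP{I}$ and generating trivial cofibrations $\RealP{J}$, in which a map $f$ is a fibration, respectively a weak equivalence, precisely when $\Sing_P(f)$ is.

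The main obstacle is condition (ii), and within it the geometric input that the realizations of the chosen generating trivial cofibrations are stratified strong deformation retracts. Following the classical template for $\Top$, I would arrange that every map in $\RealP{J}$ is the inclusion of such a retract; this requires care, as the naive filtered horn inclusions must be replaced by, or supplemented with, suitable "$P$-anodyne" maps whose realizations deformation retract compatibly with the stratification, i.e. through filtered homotopies. The class of stratified strong deformation retract inclusions is then closed under cobase change and transfinite composition in $\Top_P$ --- the retractions and homotopies being assembled cell by cell over the map to $P$ --- so every $\RealP{J}$-cell complex is again such an inclusion. Since $\Sing_P$ sends filtered strong deformation retracts to simplicial ones, it carries every $\RealP{J}$-cell complex to a simplicial homotopy equivalence, in particular to a weak equivalence of $\D$; this gives (ii). One could alternatively bypass this geometric input with a path-object argument using that $\Sing_P(Y)$ is always fibrant, but the deformation-retraction route is more transparent.

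Finally I would identify the weak equivalences of the transferred structure. By construction $f\colon(X\to P)\to(Y\to P)$ is a weak equivalence if and only if $\Sing_P(f)$ is one in $\D$, i.e. if and only if it induces isomorphisms on the homotopy groups of the fibers over all flags of $P$, that being the characterization of weak equivalences of $\D$ from \cite{ArticleMoi}. By the comparison between the simplicial and topological descriptions of $s\pi_n$ --- again \cite{ArticleMoi}, which follows from the adjunction together with homotopy invariance --- these homotopy groups are canonically the stratified homotopy groups $s\pi_n((X\to P),\phi)$. Combining the two statements yields exactly the asserted characterization of weak-equivalences, and hence the theorem.
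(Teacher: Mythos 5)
Your route is genuinely different from the paper's, and it contains a gap at its central step. On the difference: the paper does not transfer along $\RealP{-}\dashv\Sing_P$ from the category $\sS_P$ of filtered simplicial sets of \cite{ArticleMoi} (which, incidentally, is not the ``diagram category'' of the abstract --- that is $\Diag_P=\Fun(R(P)^{\op},\sS)$ with its projective model structure). Instead it right-transfers the projective structure on $\Diag_P$ along the adjunction $\Colim\dashv D$, first onto the category $\Top_{N(P)}$ of spaces over $\Real{N(P)}$ (Theorem \ref{TheoremeCMFTopNP}), and then onto $\Top_P$ via the further adjunction $\varphi_P\circ-\dashv-\times_P\Real{N(P)}$ (Theorem \ref{TheoCMFTopP}). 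The generating trivial cofibrations are therefore realizations of \emph{products} $\Lambda^n_k\otimes\Delta^{\varphi}\to\Delta^n\otimes\Delta^{\varphi}$ with $\Delta^{\varphi}$ a non-degenerate simplex of $N(P)$, and acyclicity is checked not cell by cell but via the criterion of \cite[Corollary 3.3.4]{Hess} together with a mapping path-space factorization and a lifting/retract argument (Lemma \ref{LemmeFactorisationRetract}).

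The gap is your condition (ii). For the paper's product cells the needed stratified strong deformation retraction is immediate: it is the standard retraction of $\Real{\Delta^n}$ onto $\Real{\Lambda^n_k}$ times the identity of $\RealNP{\Delta^{\varphi}}$, and this is the only geometric input Lemma \ref{LemmeFactorisationRetract} uses. By contrast, the generating trivial cofibrations of $\sS_P$ in \cite{ArticleMoi} are admissible horn inclusions $\Lambda^n_k\to\Delta^n$ carrying an \emph{arbitrary} filtration $\Delta^n\to N(P)$; these do not split as products, and proving that their realizations are filtered strong deformation retracts (with retractions and homotopies that are themselves filtered) is a substantive geometric theorem, not an adaptation of the classical template. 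You acknowledge that the naive horns must be ``replaced by, or supplemented with, suitable $P$-anodyne maps,'' but that replacement is precisely the missing content --- it is the reason the paper detours through $\Top_{N(P)}$ and $\Diag_P$ at all. Your proposed fallback via a path-object argument does not avoid the issue, since the fibrancy of $\Sing_P(Y)$ for every $Y$ is equivalent to the same extension property of realized admissible horns. The statement can be proved along your lines, but only after supplying this input, which neither \cite{ArticleMoi} nor the present paper provides in the form you need; as written, the proposal establishes the theorem only modulo this unproven step.
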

To construct this model structure, we consider an adjunction with a category of diagrams of simplicial sets
\begin{equation}\label{EquationIntroQE}
\Top_P\leftrightarrow \Fun(R(P)^{\op},\sS)
\end{equation}
where $R(P)$ can be succintly described as the subdivision of $P$.
The projective model structure on $\Fun(R(P)^{\op},\sS)$ can then be transported onto $\Top_P$ along the adjunction (\ref{EquationIntroQE}) by \cite[Corollary 3.3.4]{Hess}.
We then have the following characterization of $\Top_P$ (Theorems \ref{TheoCMFTopP} and \ref{TheoQETopNPDiagP}).
\begin{theointro}\label{TheoIntroQE}
The adjunction (\ref{EquationIntroQE}) is a Quillen-equivalence.
\end{theointro}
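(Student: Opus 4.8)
The adjunction~\eqref{EquationIntroQE} is a Quillen adjunction automatically, since the model structure on $\Top_P$ is by definition transferred from the projective structure on $\Fun(R(P)^{\op},\sS)$ along it; write $|\cdot|_P\colon\Fun(R(P)^{\op},\sS)\to\Top_P$ for the left adjoint (stratified realization) and $\Sing_P\colon\Top_P\to\Fun(R(P)^{\op},\sS)$ for the right adjoint. Because the structure on $\Top_P$ is transferred, $\Sing_P$ both preserves and reflects weak equivalences. This collapses the standard characterization of Quillen equivalences: the requirement that the right adjoint reflect weak equivalences between fibrant objects holds for free, and, since $\Sing_P$ sends a fibrant replacement $|A|_P\to R|A|_P$ to a weak equivalence, the derived unit at a cofibrant object is a weak equivalence if and only if the plain unit $\eta_A\colon A\to\Sing_P|A|_P$ is a levelwise weak equivalence. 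So the entire statement reduces to proving: \emph{for every projectively cofibrant $A$, the unit $\eta_A$ is a levelwise weak equivalence of simplicial sets.}

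The plan is to establish this by reduction to cells. A cofibrant $A$ is a retract of an object obtained from $\emptyset$ as a transfinite composite of pushouts of the generating projective cofibrations $L_c(\partial\Delta^n\hookrightarrow\Delta^n)$, where $L_c$ is left adjoint to evaluation at $c\in R(P)$; and, by construction of the realization, $|L_c\Delta^n|_P\cong\Delta^n_c$ and $|L_c\partial\Delta^n|_P\cong\partial\Delta^n_c$ are the standard stratified $n$-simplex of type $c$ and its boundary. The class of $A$ for which $\eta_A$ is a levelwise weak equivalence is closed under retracts and contains $\emptyset$ (where $\eta$ is the identity); it is closed under the relevant transfinite composites because $\Sing_P$ commutes with the corresponding filtered colimits — the $\Delta^n_c$ being compact and relative cell complexes in $\Top_P$ having underlying closed $T_1$-inclusions, exactly as in the unstratified case. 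What remains, and is the heart of the argument, is the pushout step: if $\eta_A$ is a levelwise weak equivalence, then so is $\eta_B$ for $B=A\cup_{L_c\partial\Delta^n}L_c\Delta^n$.

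Here $|B|_P=|A|_P\cup_{\partial\Delta^n_c}\Delta^n_c$, a homotopy pushout in $\Top_P$ along a cofibration, and two inputs are needed. First, a base case: $\eta_{L_c\Delta^n}\colon L_c\Delta^n\to\Sing_P\Delta^n_c$ is a levelwise weak equivalence for all $c$ and $n$. Evaluated at $c'\in R(P)$ this compares a disjoint union of copies of $\Delta^n$, indexed by the morphisms $c'\to c$ of $R(P)$, with the simplicial set of stratified maps from the $c'$-simplex into $\Delta^n_c$; I would show it splits, according to the combinatorial type of such a map, into a disjoint union of weakly contractible pieces by exhibiting an explicit stratified straight-line deformation onto the appropriate face. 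The corresponding fact for $L_c\partial\Delta^n$ is then already covered, since $\partial\Delta^n_c$ is glued from stratified simplices of lower dimension, so the whole pushout step is an induction on $n$. Second, and this is where the real work lies, one needs $\Sing_P$ to send the homotopy pushout $|A|_P\cup_{\partial\Delta^n_c}\Delta^n_c$ to a homotopy pushout, i.e.\ that the canonical map from $\Sing_P|A|_P\cup_{\Sing_P\partial\Delta^n_c}\Sing_P\Delta^n_c$ to $\Sing_P|B|_P$ is a weak equivalence. Granting this, the gluing lemma applied to
\begin{equation*}
(A\leftarrow L_c\partial\Delta^n\rightarrow L_c\Delta^n)\xrightarrow{\ \eta\ }\bigl(\Sing_P|A|_P\leftarrow\Sing_P\partial\Delta^n_c\rightarrow\Sing_P\Delta^n_c\bigr),
\end{equation*}
together with the base case and the inductive hypothesis, identifies $B$ up to weak equivalence with that pushout, so $\eta_B$ is a weak equivalence.

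The main obstacle is this last, excision-type statement. Its unstratified shadow is the fact that $\Sing$ preserves homotopy pushouts, proved by barycentric subdivision: every singular simplex of $X\cup_A Y$, after enough subdivision, becomes a sum of simplices lying in $X$ or in $Y$. Porting this to $\Top_P$ is delicate because ordinary barycentric subdivision is incompatible with the map to $P$; one has to subdivide stratified simplices in a way adapted to the poset — which is precisely the role of $R(P)$, the subdivision of $P$, in the whole set-up — and then check that the "small" stratified simplices so produced already compute $\Sing_P$ up to levelwise weak equivalence. I would isolate this as a separate lemma, a stratified approximation/Mayer–Vietoris theorem for $\Sing_P$, prove it using the stratified subdivision together with a stratified version of the Kan $\Ex^{\infty}$ argument, and then feed it into the induction above.
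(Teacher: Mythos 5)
Your reduction to the statement that the unit $\eta_A$ is a levelwise weak equivalence for every projectively cofibrant $A$ is exactly the paper's starting point, and your cell-induction skeleton (base case on free cells, gluing lemma, closure under retracts and transfinite composition) is sound in outline. The genuine gap is the step you yourself flag as ``where the real work lies'': the claim that $\Sing_P$ carries the pushout $|A|_P\cup_{\partial\Delta^n_c}\Delta^n_c$ to a homotopy pushout. This is not a routine port of the unstratified excision argument, and you do not prove it --- you only name a strategy (stratified subdivision plus a stratified $\Ex^{\infty}$). Ordinary barycentric subdivision is incompatible with the reference map to $P$ (the barycenter of a simplex whose vertices lie in several strata has no canonical stratum), and building a filtered subdivision, a filtered last-vertex map, and a filtered $\Ex^{\infty}$ whose unit is a weak equivalence is a substantial theory in its own right --- it is essentially the content of the companion paper \cite{ArticleMoi}, and nothing in the present paper supplies it. As written, the entire difficulty of Theorem \ref{TheoIntroQE} has been displaced into an unproved lemma, so the proposal is incomplete.

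It is worth recording how the paper avoids excision altogether, because that mechanism is the main point of its construction. The adjunction is factored through the category $\Top_{N(P)}$ of \emph{strongly} filtered spaces $X\to\Real{N(P)}$ (Theorems \ref{TheoQETopNPDiagP} and \ref{TheoCMFTopP}). For a cofibrant diagram $F$, a strongly filtered map $\RealNP{\Delta^{\varphi}}\to\Colim(F)$ is forced --- by compatibility with the projections to $\Real{N(P)}$, the cartesian square (\ref{EquationCarreCartesienColim}), and the point-set Lemma \ref{LemmeCofibrantDiagramHomeoOntoImage} --- to land in the single closed piece $\Real{F(\Delta^{\varphi})}\otimes\RealNP{\Delta^{\varphi}}\subset\Colim(F)$; the comparison map of mapping spaces is then a homeomorphism composed with a deformation retraction, and the unit is a weak equivalence with no decomposition of the target into cells and no approximation theorem. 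The passage from $\Top_{N(P)}$ down to $\Top_P$ is handled separately by showing that $\varphi_P\colon(\Real{N(P)},\varphi_P)\to(P,\Id_P)$ is a trivial fibration (Lemma \ref{LemmeVarphiPWeakEquivalence}). To complete your argument you must either import the filtered $\Ex^{\infty}$ machinery from \cite{ArticleMoi} to prove your excision lemma, or reroute through the strongly filtered category as the paper does.
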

Using this Quillen-Equivalence, P. Haine \cite{Haine} was able to relate the aforementioned $\infty$-category of filtered spaces, $\Top_P$, and the $\infty$-category of décollages, $\text{Déc}_P$ considered by C. Barwick, S. Glassman and himself in \cite{Exodromy}.

In order to prove Theorem \ref{TheoIntroQE}, we introduce a more rigid notion of stratification. A strongly stratified space will be a space $X$ together with a strong stratification $\varphi_X\colon X\to \Real{N(P)}$. This allows one to consider the preimages of any point $t\in \Real{N(P)}$, $\varphi_X^{-1}(t)\subset X$, in addition to the strata of $X$. This idea was first considered by A. Henriques \cite{Henriques} - for the study of filtered simplicial sets through their realization - and proved to be extremely useful for the study of strongly stratified spaces.

\subsection*{Linear overview}

Section \ref{SectionStrat} covers the definitions needed in the rest of the paper.
In section \ref{SubsectionStrStrat}, we introduce strongly stratified spaces, strongly stratified map, and give some examples. The section also contains the notions of stratified homotopy equivalences and of various useful functors.
In section \ref{SubsectionSPiN}, we recall the definition of stratified homotopy groups from \cite{ArticleMoi}, although the definition is given in terms of strongly stratified spaces.
In section \ref{SubsectionStrat}, we make explicit the relationship between stratified and strongly stratified spaces.

Section \ref{SectionCMFTopP} contains the proof of Theorems \ref{TheoIntroTopNP} and \ref{TheoIntroQE}. It is concerned with the homotopy theory of spaces (strongly) filtered over a fixed poset $P$.
In section \ref{SubsectionDiagP}, we recall the projective model structure on a category of diagrams. The model structure on the category of strongly filtered spaces, $\Top_{N(P)}$ is constructed in section \ref{SubsectionCMFTopNP} (Theorem \ref{TheoremeCMFTopNP}), and its Quillen-equivalences with the category of diagrams $\Diag_P$ (Theorem \ref{TheoQETopNPDiagP}) and the category of filtered spaces $\Top_P$ (Theorem \ref{TheoCMFTopP}) are proved in section \ref{SubsectionQETopNPDiagP} and \ref{SubsectionCMFTopP} respectively.

Section \ref{SectionCMFStrat} contains the proof of Theorem \ref{TheointroStrat}.
In section \ref{SubsectionQAAlpha}, we prove that any map of posets $\alpha\colon P\to Q$ induces Quillen-adjunctions between $\Top_P$ and $\Top_Q$ which is a Quillen-equivalence if and only if $\alpha$ is an isomorphism. This is precisely hypothesis ($Q$) in \cite[Theorem 4.2]{Cagne}, which we apply in section \ref{SubsectionCMFStrat} to produce a model structure on $\Strat$ (Theorem \ref{TheoremCMFStrat}).

Appendix \ref{AppendixColimits} contains the proof of a simple but technical fact about particular colimits appearing in the proof of Theorem \ref{TheoQETopNPDiagP}.

\section{Strongly stratified spaces and stratified homotopy groups}
\label{SectionStrat}
In this section, we introduce the category of strongly stratified spaces, $\StrStrat$, and provide a few examples. We then recall a complete definition of the stratified homotopy groups from \cite{ArticleMoi} expressed in terms of strongly stratified spaces. Finally, after recalling the classical definition of a stratified space, we explain the relation between the categories of stratified spaces and of strongly stratified spaces.
\subsection{Strongly stratified, strongly filtered spaces and filtered simplicial sets}
\label{SubsectionStrStrat}
Throughout this paper, $\Top$ stands for the category of $\Delta$-generated spaces in the sense of \cite{Dugger}. In particular, all topological spaces are assumed to be $\Delta$-generated. This is needed in order for the category $\Top$ (and in turn the categories $\Top_P$ and $\Top_{N(P)}$) to be a locally presentable category.

\begin{defin}
A strongly stratified space is the data of a triple $(X,P,\varphi_X)$ where 
\begin{itemize}
\item $X$ is a topological space,
\item $P$ is a partially ordered set,
\item $\varphi_X\colon X\to \Real{N(P)}$ is a continuous map, where $\Real{N(P)}$ is the realisation of the nerve of $P$.
\end{itemize}
A (strongly) stratified map between strongly stratified spaces $f\colon (X,P,\varphi_X)\to (Y,Q,\varphi_Y)$ is the data of 
\begin{itemize}
\item a continuous map $f\colon X\to Y$,
\item a map of posets $\widehat{f}\colon P\to Q$,
\end{itemize}
such that the following diagram commutes
\begin{equation*}
\begin{tikzcd}[column sep=large]
X
\arrow[swap]{d}{\varphi_X}
\arrow{r}{f}
&Y
\arrow{d}{\varphi_Y}
\\
\Real{N(P)}
\arrow{r}{\Real{N(\widehat{f})}}
&\Real{N(Q)}
\end{tikzcd}
\end{equation*}
The category $\StrStrat$ is the category of strongly stratified spaces and (strongly) stratified maps.
\end{defin}

\begin{remarque}
We will drop the adverb "strongly" in the expression "strongly stratified map", since all the map we will be considering between strongly stratified spaces will be strongly stratified.
\end{remarque}

\begin{exemple}
\begin{itemize}
\item The most immediate class of example of strongly stratified spaces comes from triangulated spaces. Let $X\simeq \Real{K}$ be a triangulated space, and assume that some decomposition of $X$ into sub-triangulated spaces - the strata of X - is given $X=\coprod_{p\in P}X_p$, with $X_p=\Real{K_p}$, and $K_p\subset K$. This gives a map between set $\varphi_K\colon K_0\to P$ sending a vertex $v\in K$ to the $p\in P$ such that $v\in K_p$. If the strata of $X$ satisfy the frontier condition (we refer the reader to \cite[Section 3.1]{TamakiTanaka} for a definition of the frontier condition and a discussion of the various possible assumptions on a stratification), one can define a partial order on the set of strata of $X$ as
\begin{equation*}
p\leq q\Leftrightarrow X_p\cap \bar{X_q}\not = \emptyset.
\end{equation*}
The map of sets $\varphi_K$ then extends to a map of simplicial sets
\begin{equation*}
\varphi_K\colon K\to N(P)
\end{equation*}
Taking the realization of this map, one then has a strong stratification on $X$
\begin{equation*}
\varphi_X\colon X\simeq \Real{K}\to \Real{N(P)},
\end{equation*}
satisfying $\varphi_X^{-1}(\{p\})=X_p$ for all $p\in P$. See Definitions \ref{DefinitionFilteredSimplicialSet} and \ref{DefinitionStronglyFilteredRealization}.
\item Let $L$ be some topological space, and define its (closed) cone as
\begin{equation*}
\bar{c}(L)=L\times [0,1]/L\times\{0\}.
\end{equation*}
Identifying $\Real{N(\{p_0<p_1\})}$ with $[0,1]$ gives a strong stratification on $\bar{c}(L)$ :
\begin{equation*}
\varphi_{\bar{c}(L)}=\pr_{[0,1]}\colon \bar{c}(L)\simeq L\times[0,1]/L\times\{0\}\to [0,1]\simeq \Real{N(\{p_0<p_1\})}.
\end{equation*}
\item More generally, let $X$ be a pseudo-manifold with isolated singularities. That is, $X$ is a topological manifold outside of a discrete subset of points, and those points have neighborhoods homeomorphic to (closed) cones. Let $\{x_i\}_{i\in I}\subset X$ be the set of singular points, and fix some mutually disjoint neighborhood of the $x_i$, $F_i\subset X$, such that $F_i\simeq \bar{c}(L_i)$ for some manifold $L_i$, for $i\in I$.
Then, each of the $F_i$ admits a strong stratification to $\Real{N(\{p_i<q\})}$. Consider the poset $P=\{p_i\ | i\in I\}\cup \{q\}$, with relations $p_i<q$ for all $i\in I$. Taking all the strong stratifications $\varphi_{\bar{c}(L_i)}$ together - and defining $\varphi_X$ on the complement of the $F_i$ as $q$ - gives a strong stratification on $X$ : $\varphi_X\colon X\to \Real{N(P)}$.
\item Let $\varphi_L\colon L\to \Real{N(P)}$ be a strongly stratified space. Define the poset $c(P)$ as $P\coprod\{-\infty\}$, with $-\infty< p$ for all $p\in P$. Notice that $\Real{N(c(P))}$ can be identified with $c(\Real{N(P)})$. With this identification, $\bar{c}(L)$ admits a strong stratification to $\Real{N(c(P))}$.
\begin{equation*}
\varphi_{\bar{c}(L)}=\bar{c}(\varphi_L)\colon \bar{c}(L)\to \bar{c}\Real{N(P)}\simeq \Real{N(c(P))}.
\end{equation*}
\item Taking both previous examples together, one can define recursively a strong stratification on any pseudo-manifold given compatible trivializing neighborhoods for all strata.
\end{itemize}
\end{exemple}

\begin{defin}
Let $f,g\colon (X,P,\varphi_X)\to (Y,Q,\varphi_Y)$ be two stratified maps. We say that $f$ and $g$ are stratified homotopic if there exists a stratified homotopy $H\colon (X\times [0,1],P,\varphi_X\circ\pr_X)\to(Y,Q,\varphi_Y)$ such that $H_{|X\times\{0\}}=f$ and $H_{|X\times \{1\}}=g$. A stratified map $f\colon (X,P,\varphi_X)\to (Y,Q,\varphi_Y)$ is a stratified homotopy equivalence if there exists a stratified map $g\colon (Y,Q,\varphi_Y)\to (X,P,\varphi_X)$ such that $f\circ g$ and $g\circ f$ are stratified homotopic to $\Id_Y$ and $\Id_X$ respectively.
\end{defin}

\begin{remarque}\label{RemarqueFixedPoset}
From the definition of a stratified homotopy equivalence, we deduce that if $(X,P,\varphi_X)$ and $(Y,Q,\varphi_Y)$ are stratified homotopy equivalent, then $P$ and $Q$ must be isomorphic as posets. In particular, in order to understand the homotopy theory of stratified spaces, it is enough to work over a fixed poset. This is what we will call the \textbf{filtered} setting. This leads us to the following definition.
\end{remarque}

\begin{defin}
Let $P$ be a partially ordered set, the category $\Top_{N(P)}$ of strongly filtered spaces over $P$ is the subcategory of $\StrStrat$ whose objects are the strongly stratified spaces $(X,P,\varphi_X)$, and whose maps $f\colon (X,P,\varphi_X)\to (Y,P,\varphi_Y)$ satisfy $\widehat{f}=\Id_P$. When working with filtered objects, if the poset $P$ is clear from the context, we will drop it from the notation and write $\fil{X}$ instead of $(X,P,\varphi_X)$.
\end{defin}

\begin{defin}\label{DefinitionFilteredSimplicialSet}
Let $P$ be a partially ordered set. A filtered simplicial set over $P$ is the data of a pair $(K,\varphi_K)$, where
\begin{itemize}
\item $K$ is a simplicial set,
\item $\varphi_K\colon K\to N(P)$ is a simplicial map, where $N(P)$ is the nerve of $P$.
\end{itemize}
A filtered map between filtered simplicial sets, $f\colon (K,\varphi_K)\to (L,\varphi_L)$ is a simplicial map $f\colon K\to L$ such that the following diagram commutes
\begin{equation*}
\begin{tikzcd}
K
\arrow{rr}{f}
\arrow[swap]{dr}{\varphi_K}
&&L
\arrow{dl}{\varphi_L}
\\
&N(P)
\end{tikzcd}
\end{equation*}
The category $\sS_P$ is the category of filtered simplicial sets over $P$ and filtered maps.
\end{defin}

\begin{defin}\label{DefinitionStronglyFilteredRealization}
We define the strongly filtered realization functor $\RealNP{-}\colon \sS_P\to \Top_{N(P)}$ as follows. On objects, $\RealNP{(K,\varphi_K\colon K\to N(P))}=(\Real{K},\Real{\varphi_K})$, and on maps, $\RealNP{f\colon (K,\varphi_K)\to (L,\varphi_L)}=\Real{f}\colon \RealNP{(K,\varphi_K)}\to\RealNP{(L,\varphi_L)}$.
\end{defin}

\begin{prop}
The functor $\RealNP{-}\colon \sS_P\to \Top_{N(P)}$ admits a right adjoint, $\Sing_{N(P)}\colon \Top_{N(P)}\to \sS_P$ defined as the following pullback
\begin{equation*}
\begin{tikzcd}
\Sing_{N(P)}(X,\varphi_X)
\arrow{r}
\arrow{d}
&\Sing(X)
\arrow{d}{\Sing(\varphi_X)}
\\
N(P)
\arrow{r}
&\Sing(\Real{N(P)})
\end{tikzcd}
\end{equation*}
\end{prop}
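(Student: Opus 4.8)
The plan is to reduce everything to the classical geometric realization/singular complex adjunction $\Real{-}\dashv\Sing$ between $\sS$ and $\Top$, together with the universal property of the pullback defining $\Sing_{N(P)}$. First I would record that the pullback in the statement exists (as $\sS$ is complete), and that the unlabelled map $N(P)\to\Sing(\Real{N(P)})$ appearing in it is the unit $\eta_{N(P)}$ of $\Real{-}\dashv\Sing$ at $N(P)$, equivalently the adjunct of $\Id_{\Real{N(P)}}$. The projection $\Sing_{N(P)}(X,\varphi_X)\to N(P)$ then exhibits $\Sing_{N(P)}(X,\varphi_X)$ as an object of $\sS_P$, and given a map $f$ in $\Top_{N(P)}$, applying $\Sing$ and using $\Sing(\varphi_Y)\circ\Sing(f)=\Sing(\varphi_X)$ produces, by the universal property of the pullback for $(Y,\varphi_Y)$, a unique map of pullbacks over $N(P)$; uniqueness yields functoriality. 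This defines $\Sing_{N(P)}\colon\Top_{N(P)}\to\sS_P$.

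Next I would establish the hom-set bijection. Fix $(K,\varphi_K)\in\sS_P$ and $(X,\varphi_X)\in\Top_{N(P)}$. An element of $\Hom_{\Top_{N(P)}}(\RealNP{(K,\varphi_K)},(X,\varphi_X))$ is a continuous map $g\colon\Real{K}\to X$ with $\varphi_X\circ g=\Real{\varphi_K}$. Transposing along $\Real{-}\dashv\Sing$, the maps $g$ correspond bijectively to simplicial maps $\widetilde g\colon K\to\Sing(X)$. By naturality of the transposition, the adjunct of $\varphi_X\circ g$ is $\Sing(\varphi_X)\circ\widetilde g$, while by naturality of the unit the adjunct of $\Real{\varphi_K}$ is $\eta_{N(P)}\circ\varphi_K$; since transposition is a bijection, the constraint $\varphi_X\circ g=\Real{\varphi_K}$ is therefore equivalent to $\Sing(\varphi_X)\circ\widetilde g=\eta_{N(P)}\circ\varphi_K$. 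That is exactly the assertion that $(\widetilde g,\varphi_K)$ is a commutative square over the cospan $\Sing(X)\xrightarrow{\Sing(\varphi_X)}\Sing(\Real{N(P)})\xleftarrow{\eta_{N(P)}}N(P)$, and by the universal property of the pullback such squares correspond to simplicial maps $K\to\Sing_{N(P)}(X,\varphi_X)$. The composite of such a map with the projection to $N(P)$ is $\varphi_K$ by construction, so it lies in $\Hom_{\sS_P}((K,\varphi_K),\Sing_{N(P)}(X,\varphi_X))$; running the correspondences backwards shows the assignment is a bijection.

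Finally I would check naturality of this bijection in both variables, which is immediate as it is assembled from the natural transposition of $\Real{-}\dashv\Sing$ and the natural universal property of pullbacks; this establishes $\RealNP{-}\dashv\Sing_{N(P)}$. There is no real obstacle here: the only points needing care are the identification of the structural map $N(P)\to\Sing(\Real{N(P)})$ with the unit and the use of its naturality (and of naturality of transposition in the target) to translate the commutativity condition $\varphi_X\circ g=\Real{\varphi_K}$. In fact the whole statement is an instance of the general principle that an adjunction $F\dashv G$ induces on slice categories an adjunction $F/c\dashv\bigl(\text{pullback of }G(-)\text{ along }c\to GF(c)\bigr)$, applied with $F=\Real{-}$, $G=\Sing$, and $c=N(P)$.
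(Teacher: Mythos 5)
Your proof is correct. The paper itself gives no argument for this proposition---it only cites \cite[Definition A.6.2]{HigherAlgebra}---and what you have written out is precisely the standard slice-category argument (transpose along $\Real{-}\dashv\Sing$, identify the structure map $N(P)\to\Sing(\Real{N(P)})$ with the unit, and invoke the universal property of the pullback) that underlies that reference, so there is nothing to object to.
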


\begin{proof}
See for example \cite[Definition A.6.2]{HigherAlgebra}
\end{proof}

\subsection{Stratified homotopy groups}
\label{SubsectionSPiN}
\begin{defin}\label{DefinitionDiagP}
Let $P$ be a poset. We define $R(P)$ as the full subcategory of $\sS_P$ whose objects are the non degenerate simplices of $N(P)$, $\varphi\colon \Delta^n\to N(P)$. We will write $\Delta^{\varphi}$, or $\{p_0<\dots<p_n\}$ for such a simplex, where $p_i$ is the image by $\varphi$ of the $i$-th vertex of $\Delta^n$.
The category $\Diag_P$ of simplicial diagrams over $P$ is the category of functors
\begin{equation*}
\Diag_P=\Fun(R(P)^{\op},\sS)
\end{equation*}
\end{defin}

\begin{defin}
Let $(X,\varphi_X)$ and $(Y,\varphi_Y)$ be two strongly filtered spaces. The inclusion 
\begin{equation*}
\Hom_{\Top_{N(P)}}(\fil{X},\fil{Y})\subset \mathcal{C}^0(X,Y),
\end{equation*}
 induces a topology on $\Hom_{\Top_{N(P)}}(\fil{X},\fil{Y})$. Write $\CP(\fil{X},\fil{Y})$ for the corresponding topological space. This defines a functor
\begin{equation*}
\C^0_{N(P)}\colon \TopNP^{\op}\times\TopNP\to \Top
\end{equation*}
\end{defin}

\begin{defin}
We define the functor $D\colon \Top_{N(P)}\to \Diag_P$ as follows
\begin{align*}
\Top_{N(P)}&\to \Diag_P\\
(X,\varphi_X)&\mapsto \Sing\left(\CNP\left(\RealNP{-},\fil{X}\right)\right)
\end{align*}
\end{defin}

\begin{defin}
Let $(X,\varphi_X)$ be a strongly filtered space over $P$. The stratified set of connected components of $(X,\varphi_X)$ is the functor $s\pi_0(X,\varphi_X)\colon R(P)^{\op}\to \Set$, obtained by composing $D$ with the functor $\pi_0\colon \sS\to\Set$.
\end{defin}

We want to define the stratified homotopy groups of a strongly stratified space $(X,P,\varphi_X)$, as the collection of homotopy groups of $D(\fil{X})(\Delta^{\varphi})$ for all $\Delta^{\varphi}$. In order to do so, one should first point each simplicial set $D(\fil{X})(\Delta^{\varphi})$ in a way that is compatible with the maps $D(\fil{X})(\Delta^{\varphi})\to D(\fil{X})(\Delta^{\psi})$ induced by the inclusions $\Delta^{\psi}\to\Delta^{\varphi}$. One way of obtaining such a collection of pointings is to consider a filtered map $\RealNP{N(P)}\to \fil{X}$. However, such a map need not always exist, even for non-pathological spaces, as illustrated by Example \ref{ExampleStephen}. Instead, we will consider partial collection of pointings, induced by strongly filtered maps $\RealNP{\Delta^{\varphi}}\to\fil{X}$ (Definition \ref{DefinitionStratifiedPointings}).

\begin{exemple}[{\cite[Example 10.1.0.10]{NandLal}}]\label{ExampleStephen}
Consider the stratified space of figure \ref{FigurePointings}, obtained as the realization of some stratified simplicial set. It is stratified over $P=\{a_1,a_2,a_3,b_1,b_2,b_3,c\}$ with relations $a_i<b_{i-1},b_{i+1}$, $1\leq i\leq 3$ where $i+1$ and $i-1$ are to be understood mod 3, and $b_j< c$ for $1\leq j\leq 3$. One checks that $s\pi_0(X,P,\varphi_X)$ is given by the functor
\begin{align*}
s\pi_0(X,P,\varphi_X)\colon R(P)^{\op}&\to \Set\\
\Delta^{\varphi}&\mapsto \{*\}
\end{align*}
And so, $(X,P,\varphi_X)$ is "connected" as a stratified space in the strongest possible sense, and it is the realization of a stratified simplicial set. Yet, there exists no stratified "pointing" of the form $\phi\colon\Real{N(P)}\to (X,P,\varphi_X)$.
\end{exemple}

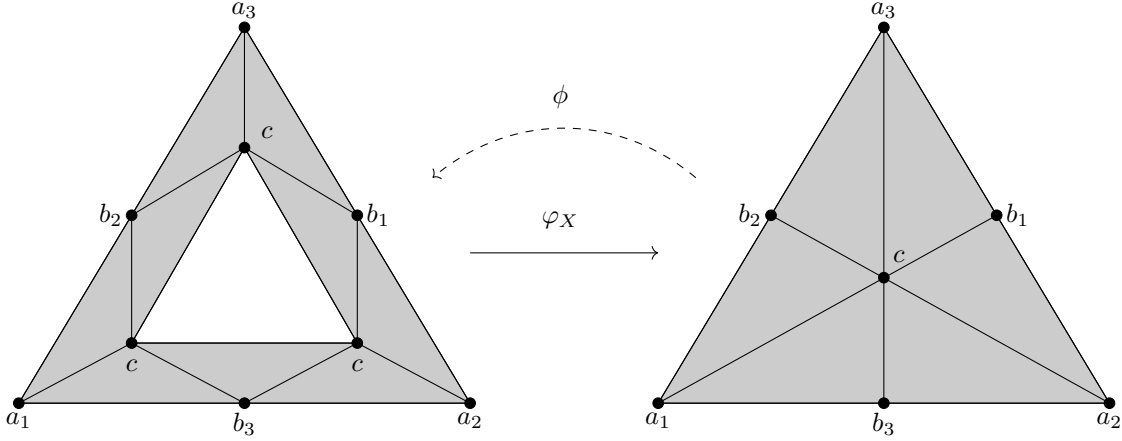
\begin{figure}[h]
\begin{tikzpicture}

\filldraw[fill=gray!40!white,draw = black](0,0)--(6,0)--(3,5)--(0,0);
\filldraw[fill=white, draw=black](1.5,0.8)--(4.5,0.8)--
(3,3.4)--(1.5,0.8);
\draw[black](0,0)--(6,0)--(3,5)--(0,0)--(1.5,0.8)--(1.5,2.5)--(3,3.4)--(3,5);
\draw[black](3,3.4)--(4.5,2.5)--(4.5,0.8)--(6,0);
\draw[black](4.5,0.8)--(3,0)--(1.5,0.8);
\filldraw[black] (0,0) circle (2pt);
\filldraw[black](6,0) circle (2pt);
\filldraw[black](3,5) circle (2pt);
\filldraw[black](4.5,2.5) circle (2pt);
\filldraw[black](1.5,2.5) circle (2pt);
\filldraw[black](3,0) circle (2pt);
\filldraw[black](1.5,0.8) circle (2pt);
\filldraw[black](4.5,0.8) circle (2pt);
\filldraw[black](3,3.4) circle (2pt);
\draw[black](1.5,0.8)--(4.5,0.8)--(3,3.4)--(1.5,0.8);
\coordinate[label = {[black]below : $a_1$}] (a1) at (0,0);
\coordinate[label = {[black]below : $a_2$}] (a1) at (6,0);
\coordinate[label = {[black]above : $a_3$}] (a1) at (3,5);
\coordinate[label = {[black]right : $b_1$}] (a1) at (4.5,2.5);
\coordinate[label = {[black]left : $b_2$}] (a1) at (1.5,2.5);
\coordinate[label = {[black]below : $b_3$}] (a1) at (3,0);
\coordinate[label = {[black]below : $c$}] (a1) at (1.5,0.7);
\coordinate[label = {[black]below : $c$}] (a1) at (4.5,0.7);
\coordinate[label = {[black]above : $c$}] (a1) at (3.3,3.4);

\filldraw[fill=gray!40!white,draw=black,cm ={1,0,0,1,(8.5cm,0cm)}](0,0)--(6,0)--(3,5)--(0,0);
\draw[->] (6,2)--(8.5,2);
\draw[->, dashed] (9,3)to [out= 140,in=40](5.5,3);
\coordinate[label ={[black]above : $\phi$}](a) at (7.2,3.8);
\coordinate[label ={[black]above : $\varphi_X$}](a) at (7.2,2.2);
\draw[black,cm ={1,0,0,1,(8.5cm,0cm)}](0,0)--(6,0)--(3,5)--(0,0);
\draw[black,cm ={1,0,0,1,(8.5cm,0cm)}](0,0)--(4.5,2.5);
\draw[black,cm ={1,0,0,1,(8.5cm,0cm)}](6,0)--(1.5,2.5);
\draw[black,cm ={1,0,0,1,(8.5cm,0cm)}](3,0)--(3,5);
\filldraw[black,cm ={1,0,0,1,(8.5cm,0cm)}] (0,0) circle (2pt);
\filldraw[black,cm ={1,0,0,1,(8.5cm,0cm)}](6,0) circle (2pt);
\filldraw[black,cm ={1,0,0,1,(8.5cm,0cm)}](3,5) circle (2pt);
\filldraw[black,cm ={1,0,0,1,(8.5cm,0cm)}](4.5,2.5) circle (2pt);
\filldraw[black,cm ={1,0,0,1,(8.5cm,0cm)}](1.5,2.5) circle (2pt);
\filldraw[black,cm ={1,0,0,1,(8.5cm,0cm)}](3,0) circle (2pt);
\filldraw[black,cm ={1,0,0,1,(8.5cm,0cm)}](3,1.67) circle (2pt);
\coordinate[cm ={1,0,0,1,(8.5cm,0cm)},label = {[black]below : $a_1$}] (a1) at (0,0);
\coordinate[cm ={1,0,0,1,(8.5cm,0cm)},label = {[black]below : $a_2$}] (a1) at (6,0);
\coordinate[cm ={1,0,0,1,(8.5cm,0cm)},label = {[black]above : $a_3$}] (a1) at (3,5);
\coordinate[cm ={1,0,0,1,(8.5cm,0cm)},label = {[black]right : $b_1$}] (a1) at (4.5,2.5);
\coordinate[cm ={1,0,0,1,(8.5cm,0cm)},label = {[black]left : $b_2$}] (a1) at (1.5,2.5);
\coordinate[cm ={1,0,0,1,(8.5cm,0cm)},label = {[black]below : $b_3$}] (a1) at (3,0);
\coordinate[cm ={1,0,0,1,(8.5cm,0cm)},label = {[black]above : $c$}] (a1) at (3.2,1.75);

\end{tikzpicture}
\caption{The space $X$ with a stratification over $\Real{N(P)}$, with $P=\{a_1,a_2,a_3,b_1,b_2,b_3,c\}$}
\label{FigurePointings}
\end{figure}

\begin{defin}\label{DefinitionStratifiedPointings}
Let $\fil{X}$ be a space strongly filtered over $P$. A filtered pointing of $\fil{X}$ is the data of a non-degenerate simplex of $N(P)$, $\Delta^{\varphi}$ and of a strongly filtered map
\begin{equation*}
\phi\colon \RealNP{\Delta^{\varphi}}\to \fil{X}.
\end{equation*} 
\end{defin}

\begin{remarque}
If $\varphi\colon \Delta^n\to N(P)$ is a non-degenerate simplex of $N(P)$, the vertices of $\Delta^n$ are sent by $\varphi$ to an increasing sequence of elements of $P$, $\varphi(e_0)<\varphi(e_1)<\dots<\varphi(e_n)$. We will write $\Im(\varphi)$ for this totally ordered subset of $P$. Note that $R(\Im(\varphi))$ is canonically equivalent to the full subcategory of $R(P)$ whose objects are the $\Delta^{\psi}\subseteq \Delta^{\varphi}$.
\end{remarque}

\begin{defin}
Let $\fil{X}$ be a space strongly filtered over $P$, and let $\phi\colon\RealNP{\Delta^{\varphi}}\to\fil{X}$ be a filtered pointing of $\fil{X}$. We define the pointed diagram of $\fil{X}$, 
\begin{equation*}
D_{\phi}\fil{X}\colon R(\Im(\varphi))^{op}\to \sS_*
\end{equation*}
 as the restriction of the functor $D(\fil{X})$ to the full subcategory $R(\Im(\varphi))^{\op}\subseteq R(P)^{\op}$, where each $D\fil{X}(\Delta^{\psi})$ is pointed by $\phi_{|\RealNP{\Delta^{\psi}}}$ for all $\Delta^{\psi}\subseteq \Delta^{\varphi}$. Here, $\sS_*$ is the category of pointed simplicial sets. Let $k\geq 1$, the $k$-th stratified homotopy group of $\fil{X}$ with respect to the pointing $\phi$ is the functor with values in the category of groups
\begin{equation*}
s\pi_k(\fil{X},\phi)\colon R(\Im(\varphi))^{\op}\to\text{Grp},
\end{equation*}
obtained by composing $D_{\phi}\fil{X}$ with the $k$-th homotopy group functor $\pi_k\colon \sS_*\to \text{Grp}$.
\end{defin}

\begin{remarque}
If $P=\{*\}$ is the poset with only one element, then any space $X$ is stratified over $P$. In this case, a stratified pointing is just a point $x\in X$, and the category $R(P)$ has a single object $\Delta^{*}$. For $n\geq 1$, one has $s\pi_n(X,x)(\Delta^{*})=\pi_n(X,x)$ and $s\pi_0(X)(\Delta^*)=\pi_0(X)$.
\end{remarque}

The following propositions follow easily from the functoriality of previous definitions.
\begin{prop}\label{PropositionSPiNFonctorial}
Let $f\colon (X,\varphi_X)\to(Y,\varphi_Y)$ be a filtered map. Then, for all $n\geq 0$ and all stratified pointings of $X$, $\phi\colon \RealNP{\Delta^{\varphi}}\to (X,\varphi_X)$, $f$ induces a natural transformation
\begin{equation*}
s\pi_n(f)\colon s\pi_n(\fil{X},\phi)\to s\pi_n(\fil{Y},f\circ\phi)
\end{equation*}
\end{prop}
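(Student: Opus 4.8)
The plan is to observe that $s\pi_n(-,-)$ is assembled from a chain of functorial constructions — the functor $D$, restriction to a subdiagram, the choice of basepoints, and the homotopy group functors — so that $s\pi_n(f)$ is produced simply by pasting together the functoriality of each link in the chain. The first step is to check that $D$ is a genuine functor $\TopNP\to\Diag_P$ and not merely an assignment on objects. Given a filtered map $f\colon \fil{X}\to\fil{Y}$, post-composition by $f$ defines for each non-degenerate simplex $\Delta^{\varphi}$ of $N(P)$ a map $f_*\colon\CNP(\RealNP{\Delta^{\varphi}},\fil{X})\to\CNP(\RealNP{\Delta^{\varphi}},\fil{Y})$. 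Since these mapping objects carry the subspace topology inherited from the spaces of continuous maps, and post-composition by a fixed continuous map is continuous for the mapping-space topology, $f_*$ is continuous; moreover it is natural in $\Delta^{\varphi}\in R(P)$, because both composites around the relevant square send $g$ to $f\circ g\circ\RealNP{\iota}$ for the inclusion $\iota\colon\Delta^{\psi}\hookrightarrow\Delta^{\varphi}$. Applying $\Sing$ levelwise then yields a natural transformation $D(f)\colon D(\fil{X})\to D(\fil{Y})$ in $\Diag_P$, and $f\mapsto D(f)$ visibly respects identities and composition.

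Next I would transport the pointing and check compatibility with basepoints. Because $f$ lies in $\TopNP$ we have $\widehat{f}=\Id_P$, so $f\circ\phi\colon\RealNP{\Delta^{\varphi}}\to\fil{Y}$ is again a filtered pointing indexed by the \emph{same} simplex $\Delta^{\varphi}$; in particular $\Im(\varphi)$, hence the indexing category $R(\Im(\varphi))^{\op}$, is unchanged, and one may restrict $D(f)$ along $R(\Im(\varphi))^{\op}\subseteq R(P)^{\op}$. For each $\Delta^{\psi}\subseteq\Delta^{\varphi}$ the basepoint of $D\fil{X}(\Delta^{\psi})=\Sing\CNP(\RealNP{\Delta^{\psi}},\fil{X})$ is the $0$-simplex given by $\phi_{|\RealNP{\Delta^{\psi}}}$, and $D(f)(\Delta^{\psi})=\Sing(f_*)$ sends it to $f\circ\phi_{|\RealNP{\Delta^{\psi}}}=(f\circ\phi)_{|\RealNP{\Delta^{\psi}}}$, which is precisely the basepoint of $D\fil{Y}(\Delta^{\psi})$ determined by the pointing $f\circ\phi$. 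Hence the restriction of $D(f)$ refines to a natural transformation $D_{\phi}\fil{X}\to D_{f\circ\phi}\fil{Y}$ of functors $R(\Im(\varphi))^{\op}\to\sS_*$.

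Finally, composing with $\pi_n\colon\sS_*\to\Grp$ for $n\geq 1$ (respectively $\pi_0\colon\sS\to\Set$ for $n=0$), which carries natural transformations to natural transformations, produces the desired natural transformation $s\pi_n(f)\colon s\pi_n(\fil{X},\phi)\to s\pi_n(\fil{Y},f\circ\phi)$. The only point deserving any care — and the closest thing to an obstacle — is the continuity and naturality of $f_*$ on the filtered mapping spaces used in the first step; everything else is a direct unwinding of the definitions. Even this is essentially automatic, since post-composition by a fixed map is continuous for the relevant mapping-space topology with no compactness hypothesis on the source, which is why the statement is asserted to follow easily from the functoriality of the earlier constructions.
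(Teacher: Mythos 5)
Your proposal is correct and is exactly the argument the paper intends: the paper offers no written proof beyond the remark that the proposition "follows easily from the functoriality of previous definitions," and your write-up simply makes that chain of functorialities (functoriality of $D$, compatibility of basepoints under post-composition by $f$, and application of $\pi_n$) explicit. The one point you flag as needing care, continuity of post-composition on the filtered mapping spaces, is indeed automatic here since $\Top$ is the cartesian closed category of $\Delta$-generated spaces.
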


\begin{prop}\label{PropositionSPiNHomotopyInvariant}
Let $\fil{X}$ be a space strongly filtered over $P$ and let $\phi,\phi'\colon \RealNP{\Delta^{\varphi}}\to\fil{X}$ be two pointings of $\fil{X}$. Then if $H\colon \RealNP{\Delta^{\varphi}}\times [0,1]\to\fil{X}$ is a stratified homotopy between $\phi$ and $\phi'$, then $H$ induces isomorphisms for all $n\geq 1$
\begin{equation*}
s\pi_n(H)\colon s\pi_n(\fil{X},\phi)\xrightarrow{\simeq} s\pi_n(\fil{X},\phi').
\end{equation*}
Furthermore, let $f,g\colon \fil{X}\to\fil{Y}$ be two filtered maps, if $H'\colon \fil{X}\times[0,1]\to\fil{Y}$ is a filtered homotopy between $f$ and $g$, then there is a commutative diagram
\begin{equation*}
\begin{tikzcd}
s\pi_n(\fil{X},\phi)
\arrow{r}{s\pi_n(f)}
\arrow[swap]{dr}{s\pi_n(g)}
&s\pi_n(\fil{Y},f\circ\phi)
\arrow{d}{s\pi_n(H'\circ\phi)}
\\
&s\pi_n(Y,g\circ \phi)
\arrow[phantom, shift left = 3]{u}{\simeq}
\end{tikzcd}
\end{equation*}
\end{prop}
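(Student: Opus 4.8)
The plan is to reduce everything to the classical, unstratified statements about homotopy groups of pointed simplicial sets, applied objectwise over the subdivision category $R(P)$. Recall that for each non-degenerate simplex $\Delta^\psi \subseteq \Delta^\varphi$ the group $s\pi_n(\fil{X},\phi)(\Delta^\psi)$ is by definition $\pi_n\big(D\fil{X}(\Delta^\psi), \phi_{|\RealNP{\Delta^\psi}}\big)$, where $D\fil{X}(\Delta^\psi) = \Sing\big(\CNP(\RealNP{\Delta^\psi}, \fil{X})\big)$. So all the data in sight is, simplex by simplex, a pointed simplicial set together with maps induced by restriction, and a stratified homotopy will produce, simplex by simplex, an ordinary (Serre) homotopy between the corresponding based maps or based points.

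First I would treat the first assertion. Fix $\Delta^\psi \subseteq \Delta^\varphi$. A stratified homotopy $H\colon \RealNP{\Delta^\varphi}\times[0,1]\to\fil{X}$ between $\phi$ and $\phi'$ restricts to a stratified homotopy $\RealNP{\Delta^\psi}\times[0,1]\to\fil{X}$ between $\phi_{|\RealNP{\Delta^\psi}}$ and $\phi'_{|\RealNP{\Delta^\psi}}$; applying the adjunction $\RealNP{-}\dashv\Sing_{N(P)}$, equivalently using that $[0,1]$-homotopies of maps into $X$ land in $\CNP(\RealNP{\Delta^\psi},\fil{X})$, this is the same as a path in $\CNP(\RealNP{\Delta^\psi},\fil{X})$ from the point $\phi_{|\RealNP{\Delta^\psi}}$ to $\phi'_{|\RealNP{\Delta^\psi}}$, hence (after applying $\Sing$) a path in the Kan complex $D\fil{X}(\Delta^\psi)$. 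The classical fact that a path between two basepoints in a Kan complex induces an isomorphism on all $\pi_n$, $n\geq 1$, gives the isomorphism $s\pi_n(H)(\Delta^\psi)$. Naturality in $\Delta^\psi$ — i.e.\ that these isomorphisms assemble into a natural transformation — follows because the change-of-basepoint isomorphism is itself natural with respect to the maps $D\fil{X}(\Delta^\varphi')\to D\fil{X}(\Delta^\psi)$ induced by face inclusions $\Delta^\psi\subseteq\Delta^{\varphi'}$, once one observes that the chosen path is compatible under these restrictions (it comes from the single homotopy $H$).

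For the second assertion, given a filtered homotopy $H'\colon\fil{X}\times[0,1]\to\fil{Y}$ from $f$ to $g$, compose with the pointing to get $H'\circ(\phi\times\mathrm{id})\colon\RealNP{\Delta^\varphi}\times[0,1]\to\fil{Y}$, a stratified homotopy between $f\circ\phi$ and $g\circ\phi$; this is the pointing-homotopy ``$H'\circ\phi$'' appearing in the diagram, and by the first part it induces the vertical isomorphism $s\pi_n(H'\circ\phi)$. Commutativity of the triangle is then the statement that, for each $\Delta^\psi$, the two based maps $f_*$ and $g_*$ on $\pi_n$ differ precisely by the change-of-basepoint isomorphism attached to the path traced out by $H'$ — i.e.\ the usual compatibility $g_* = (\text{transport along }H'\circ\phi)\circ f_*$ for homotopic maps of pointed spaces, applied objectwise and checked to be natural in $\Delta^\psi$ by the same functoriality argument as before.

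I expect the only real subtlety to be bookkeeping rather than mathematics: one must make sure the restriction functors $R(\Im(\varphi))^{\op}\to R(\Im(\psi))^{\op}$ and the induced maps on the diagrams $D\fil{X}$ interact correctly with the change-of-basepoint isomorphisms, so that everything stated as a ``natural transformation'' or ``commutative diagram'' of functors on $R(\Im(\varphi))^{\op}$ genuinely is one. Since all maps in the pointed diagram $D_\phi\fil{X}$ are induced by precomposition with face inclusions $\RealNP{\Delta^\psi}\hookrightarrow\RealNP{\Delta^{\varphi'}}$, and the homotopies in play are restrictions of a single homotopy over $\RealNP{\Delta^\varphi}$, this compatibility is automatic; I would spell it out in one short paragraph and otherwise cite the classical change-of-basepoint results for Kan complexes. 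As the excerpt itself says, the proposition ``follows easily from the functoriality of previous definitions,'' so the write-up should be brief.
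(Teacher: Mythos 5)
Your proof is correct and fills in exactly the argument the paper leaves implicit (the paper gives no proof, asserting only that the proposition ``follows easily from the functoriality of previous definitions''): objectwise reduction to classical change-of-basepoint and homotopy invariance for the Kan complexes $D\fil{X}(\Delta^{\psi})$ via the exponential adjunction of Lemma \ref{LemmeAdjunctionRealC}, with naturality in $\Delta^{\psi}$ coming from the fact that all structure maps are precomposition with face inclusions and all paths are restrictions of a single homotopy. No gaps.
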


\begin{remarque}
Although pointings of the form $\phi\colon\RealNP{N(P)}\to \fil{X}$ need not always exist (see Example \ref{ExampleStephen}), one might still want to consider them when they do. Given such a pointing, one can still define the stratified homotopy groups of $(X,\varphi_X)$ with respect to $\phi$, and in that case, one gets a functor from the full category $R(P)^{\op}$ to $\text{Grp}$. Furthermore, this functor is still invariant under stratified homotopy in the sense of Propositions \ref{PropositionSPiNFonctorial} and \ref{PropositionSPiNHomotopyInvariant}.
\end{remarque}

\subsection{Stratified and filtered spaces}
\label{SubsectionStrat}
\begin{defin}
A stratified space is the data of a triple $(X,P,\varphi_X)$ where
\begin{itemize}
\item $X$ is a topological space,
\item $P$ is a partially ordered set,
\item $\varphi_X\colon X\to P$ is a continuous map, where $P$ is given the Alexandrov topology.
\end{itemize}
A stratified map between stratified spaces $f\colon (X,P,\varphi_X)\to (Y,Q,\varphi_Y)$ is the data of 
\begin{itemize}
\item a continuous map $f\colon X\to Y$,
\item a map of posets $\widehat{f}\colon P\to Q$,
\end{itemize}
such that the following diagram commutes
\begin{equation*}
\begin{tikzcd}[column sep=large]
X
\arrow[swap]{d}{\varphi_X}
\arrow{r}{f}
&Y
\arrow{d}{\varphi_Y}
\\
P
\arrow{r}{\widehat{f}}
&Q
\end{tikzcd}
\end{equation*}
The category $\Strat$ is the category of stratified spaces and stratified maps. Let $P$ be some fixed poset, the category of filtered spaces over $P$, $\Top_P$ is the subcategory of $\Strat$ whose objects are of the form $(X,P,\varphi_X)$ and whose maps $f\colon (X,P,\varphi_X)\to (Y,P,\varphi_Y)$ verify $\widehat{f}=\Id_P$.
\end{defin}

Recall that a point in $\Real{N(P)}$ is represented by a pair, $(\{p_0<\dots<p_n\},t)$, where $\{p_0,\dots,p_n\}$ is a simplex of $N(P)$, and $t\in \Delta^n$ is a point in the standard $n$-simplex. If one adds the condition that $t$ lies in the interior of $\Delta^n$, then this representation is unique.
\begin{defin}\label{DefinitionPhiP}
Let $\varphi_P\colon \Real{N(P)}\to P$ be the following morphism
\begin{align*}
\varphi_P\colon \Real{N(P)}&\to P\\
\left(\{p_0<\dots <p_n\},t\in \Int(\Delta^n)\right)&\mapsto p_n
\end{align*}
We will write $\varphi_P\circ-\colon \Top_{N(P)}\to\Top_P$ for the functor sending $(X,P,\varphi_X)\in \Top_{N(P)}$ to $(X,P,\varphi_P\circ\varphi_X)$. Let $(Y,P,\varphi_Y)\in \Top_P$ be a $P$-filtered space. Consider the following pullback
\begin{equation*}
\begin{tikzcd}
Y\times_P \Real{N(P)}
\arrow{r}{\widetilde{\varphi_Y}}
\arrow{d}
&\Real{N(P)}
\arrow{d}{\varphi_P}
\\
Y
\arrow[swap]{r}{\varphi_Y}
&P
\end{tikzcd}
\end{equation*}
Define the functor $-\times_P\Real{N(P)}\colon \Top_P\to\Top_{N(P)}$ as the following functor 
\begin{align*}
-\times_P\Real{N(P)}\colon \Top_P&\to\Top_{N(P)}\\
(Y,P,\varphi_Y)&\mapsto(Y\times_P \Real{N(P)},P,\widetilde{\varphi_Y})
 \end{align*}
\end{defin}

\begin{prop}
The functor $-\times_P\Real{N(P)}$ is right adjoint to the functor $\varphi_P\circ-$
\end{prop}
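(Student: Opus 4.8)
The plan is to recognise this adjunction as an instance of the standard adjunction between post-composition and base change in slice categories. By definition $\Top_{N(P)}$ is the slice category $\Top/\Real{N(P)}$ and $\Top_P$ is the slice category $\Top/P$ (with $P$ carrying the Alexandrov topology). Under these identifications, $\varphi_P\circ-$ is exactly the post-composition functor $\Sigma_{\varphi_P}\colon \Top/\Real{N(P)}\to\Top/P$ associated with the continuous map $\varphi_P\colon\Real{N(P)}\to P$ of Definition \ref{DefinitionPhiP}, and $-\times_P\Real{N(P)}$ is the pullback (base-change) functor $\varphi_P^{*}\colon\Top/P\to\Top/\Real{N(P)}$, which is well defined because $\Top$ is complete. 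So the statement follows from the general fact that, in any category with pullbacks, $\Sigma_f\dashv f^{*}$ for every morphism $f$; the counit at $(Y,\varphi_Y)$ is the projection $\varphi_P^{*}\Sigma_{\varphi_P}$ applied appropriately, and the unit at $(X,\varphi_X)$ is the map into the pullback determined by $\Id_X$ and $\varphi_X$.

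For completeness I would spell out the natural bijection directly. Fix $(X,\varphi_X)\in\Top_{N(P)}$ and $(Y,\varphi_Y)\in\Top_P$. A morphism $\varphi_P\circ(X,\varphi_X)\to(Y,\varphi_Y)$ in $\Top_P$ is a continuous $f\colon X\to Y$ with $\varphi_Y\circ f=\varphi_P\circ\varphi_X$, while a morphism $(X,\varphi_X)\to(Y,\varphi_Y)\times_P\Real{N(P)}$ in $\Top_{N(P)}$ is a continuous $g\colon X\to Y\times_P\Real{N(P)}$ with $\widetilde{\varphi_Y}\circ g=\varphi_X$. Given such a $g$, set $f=\proj_Y\circ g$, where $\proj_Y\colon Y\times_P\Real{N(P)}\to Y$ is the other projection; commutativity of the defining pullback square yields $\varphi_Y\circ f=\varphi_P\circ\widetilde{\varphi_Y}\circ g=\varphi_P\circ\varphi_X$. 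Conversely, given $f$ with $\varphi_Y\circ f=\varphi_P\circ\varphi_X$, the pair $(f,\varphi_X)$ factors uniquely through the pullback as a continuous $g\colon X\to Y\times_P\Real{N(P)}$ with $\proj_Y\circ g=f$ and $\widetilde{\varphi_Y}\circ g=\varphi_X$. These two assignments are mutually inverse, and naturality in $(X,\varphi_X)$ and in $(Y,\varphi_Y)$ is immediate from the uniqueness in the universal property of the pullback.

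The argument is purely formal, so I do not anticipate any genuine obstacle. The only points deserving a word of care are that the pullback $Y\times_P\Real{N(P)}$ is to be taken in the category of $\Delta$-generated spaces — which causes no problem, since that category is complete (indeed locally presentable, as recalled above) — and that continuity of $g$ in the backward direction should be obtained from the universal property of the pullback rather than checked by hand; the remaining naturality verifications are routine diagram chases.
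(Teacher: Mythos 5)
Your proof is correct: the paper states this proposition without proof, and the intended argument is exactly the standard slice-category adjunction $\Sigma_{\varphi_P}\dashv\varphi_P^*$ that you identify and then verify explicitly via the universal property of the pullback. Your remark about taking the pullback in the category of $\Delta$-generated spaces is the right point of care, and the rest is the routine diagram chase you describe.
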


\begin{remarque}
Thanks to the previous adjonction, one can define an adjunction between the category of filtered simplicial set and of filtered spaces as follows
\begin{equation*}
\RealP{-}=(\varphi_P\circ-)\RealNP{-}\colon \sS_P\leftrightarrow \Top_P\colon \Sing_{N(P)}(-\times_P\Real{N(P)})=\Sing_P
\end{equation*}
Similarly, one defines the stratified homotopy groups of a filtered space $(Y,P,\varphi_Y)$ as the filtered homotopy groups of $(Y\times_P\Real{N(P)},P,\widetilde{\varphi_Y})$. One checks that this definition coincides with the definition given in \cite{ArticleMoi}
\end{remarque}

\section{The model category of filtered spaces}
\label{SectionCMFTopP}
We saw in Remark \ref{RemarqueFixedPoset} that it was sufficient to work over a fixed poset to understand the homotopy theory of stratified spaces. In this section, some poset $P$ is fixed, and we construct and characterize model structures for the categories of filtered and strongly filtered spaces over $P$ - the construction of the model structure on $\Strat$ is the content of section \ref{SectionCMFStrat}. We first recall the projective model structure on the category of diagrams of simplicial sets, $\Diag_P$ (see Definition \ref{DefinitionDiagP}) and then transport this model structure on $\Top_{N(P)}$ (Theorem \ref{TheoremeCMFTopNP}). We then go on to show that the two model structure are Quillen-equivalent (Theorem \ref{TheoQETopNPDiagP}), and that one can also transport a model structure to $\Top_P$ that is Quillen-equivalent to the one on $\Top_{N(P)}$ (Theorem \ref{TheoCMFTopP}).

\subsection{The category of diagrams}
\label{SubsectionDiagP}
Categories of functors between a small category and a cofibrantly generated model category always admit cofibrantly generated model structures. We recall how to define the class of generating (trivial) cofibrations in the case of $\Diag_P$. We also exhibit an adjunction $\Diag_P\leftrightarrow\Top_{N(P)}$.
\begin{defin}
Let $\Delta^{\varphi}\in R(P)$ be a non-degenerate simplex and $K$ a simplicial set. Define $K^{\Delta^{\varphi}}\colon R(P)^{\op}\to \sS$ as the following functor
\begin{align*}
K^{\Delta^{\varphi}}\colon R(P)^{\op}&\to \sS\\
\Delta^{\psi}&\mapsto \left\{\begin{array}{cl}
K &\text{ if $\Delta^{\psi}\subseteq\Delta^{\varphi}$}\\
\emptyset & \text{ else}
\end{array}\right.
\end{align*}
Where, for $\Delta^{\psi_1}\subseteq\Delta^{\psi_2}\subseteq\Delta^{\varphi}$, the maps $K^{\Delta^{\varphi}}(\Delta^{\psi_1}\to\Delta^{\psi_2})$ are defined as $\Id_K\colon K\to K$.
\end{defin}
\begin{prop}
The category $\Diag_P$ together with the following classes of maps is a cofibrantly generated model category.
\begin{itemize}
\item Fibrations are the maps $f\colon F\to G$ such that for all $\Delta^{\varphi}\in R(P)$, $f(\Delta^{\varphi})\colon F(\Delta^{\varphi})\to G(\Delta^{\varphi})$ is a Kan fibration.
\item Weak-equivalences are the maps $f\colon F\to G$ such that for all $\Delta^{\varphi}\in R(P)$, $f(\Delta^{\varphi})\colon F(\Delta^{\varphi})\to G(\Delta^{\varphi})$ is a weak-equivalence in the Kan model structure on $\sS$.
\item Cofibrations are the maps with the left lifting property against trivial fibrations.
\end{itemize}
The sets of generating cofibrations and trivial cofibrations are the following
\begin{itemize}
\item $
I=\left\{(\partial(\Delta^n)\to\Delta^n)^{\Delta^{\varphi}}\ |\ 0\leq n,\ \Delta^{\varphi}\in R(P)\right\}
$
\item $
J=\left\{(\Lambda^n_k\to \Delta^n)^{\Delta^{\varphi}}\ |\ 0\leq k\leq n, \ n>0,\ \Delta^{\varphi}\in R(P)\right\}
$
\end{itemize}
\end{prop}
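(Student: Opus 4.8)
The plan is to identify the stated structure with the projective model structure on the functor category $\Diag_P=\Fun(R(P)^{\op},\sS)$, obtained via Kan's recognition theorem by transferring the Kan--Quillen model structure on $\sS$ along the evaluation functors. (Alternatively one may simply invoke the general existence of the projective model structure on diagrams valued in a cofibrantly generated model category.)

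First I would pin down the category $R(P)$ and the left adjoints to evaluation. The objects of $R(P)$ are the non-degenerate simplices of $N(P)$, equivalently the finite non-empty chains of $P$; a morphism $\Delta^{\psi}\to\Delta^{\varphi}$ in $\sS_P$ is a simplicial map over $N(P)$, and since both simplices are non-degenerate and $N(P)$ is the nerve of a poset, such a map is forced to be a face inclusion, existing (and then unique) precisely when $\Im(\psi)\subseteq\Im(\varphi)$, i.e. when $\Delta^{\psi}\subseteq\Delta^{\varphi}$. Thus $R(P)$ is (equivalent to) a poset. For $\Delta^{\varphi}\in R(P)$, write $\ev_{\Delta^{\varphi}}\colon\Diag_P\to\sS$ for evaluation at $\Delta^{\varphi}$; its left adjoint $F_{\Delta^{\varphi}}$ sends a simplicial set $K$ to the functor with value $\coprod_{\Hom_{R(P)}(\Delta^{\psi},\Delta^{\varphi})}K$ at $\Delta^{\psi}$, which by the previous remark is $K$ when $\Delta^{\psi}\subseteq\Delta^{\varphi}$ and $\emptyset$ otherwise, with identity transition maps — in other words $F_{\Delta^{\varphi}}(K)=K^{\Delta^{\varphi}}$. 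Hence $I$ and $J$ are exactly the images under the $F_{\Delta^{\varphi}}$ of the generating cofibrations $\partial\Delta^n\to\Delta^n$ and of the generating trivial cofibrations $\Lambda^n_k\to\Delta^n$ of $\sS$.

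Next I would run the recognition theorem with $W$ the class of objectwise weak equivalences (which trivially satisfies two-out-of-three and is stable under retracts) and the sets $I$, $J$ above. Bicompleteness of $\Diag_P$ is inherited from $\sS$, and the smallness hypotheses are automatic since $\Diag_P$ is locally presentable. By the adjunctions $F_{\Delta^{\varphi}}\dashv\ev_{\Delta^{\varphi}}$, a map $f$ has the right lifting property against $J$ if and only if each $f(\Delta^{\varphi})$ is a Kan fibration, i.e. $f$ is a fibration in the stated sense, and against $I$ if and only if each $f(\Delta^{\varphi})$ is a trivial Kan fibration, i.e. $f$ is an objectwise trivial fibration. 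Since in $\sS$ the trivial fibrations are precisely the maps that are simultaneously fibrations and weak equivalences, this already shows that the $I$-injectives coincide with $W$ intersected with the $J$-injectives, which supplies the compatibility axiom and one of the two acyclicity conditions of the recognition theorem at once.

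The one step needing a genuine — if short — argument is that relative $J$-cell complexes lie in $W$ and are $I$-cofibrations. For the latter: each $\Lambda^n_k\to\Delta^n$ is a cofibration of simplicial sets, hence an $I_{\sS}$-cofibration, and since $F_{\Delta^{\varphi}}$ is a left adjoint carrying the set $I_{\sS}$ of generating cofibrations of $\sS$ into $I$, it carries $I_{\sS}$-cofibrations into $I$-cofibrations; thus $J$ consists of $I$-cofibrations and so do relative $J$-cell complexes. For the former: pushing out some $F_{\Delta^{\varphi}}(\Lambda^n_k\to\Delta^n)$ along an arbitrary map and evaluating at $\Delta^{\psi}$ yields, when $\Delta^{\psi}\subseteq\Delta^{\varphi}$, a pushout of the anodyne map $\Lambda^n_k\to\Delta^n$, and otherwise an identity, so it is objectwise a trivial cofibration of simplicial sets; transfinite composites of such maps are again objectwise trivial cofibrations, in particular objectwise weak equivalences. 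The recognition theorem then produces the model structure, and unwinding the definitions shows that the cofibrations are exactly the maps with the left lifting property against the $I$-injectives, i.e. against the trivial fibrations, as asserted. I do not expect a serious obstacle; the only mild subtlety is the combinatorial description of $R(P)$ used to identify $F_{\Delta^{\varphi}}(K)$ with $K^{\Delta^{\varphi}}$.
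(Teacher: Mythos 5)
Your proposal is correct and is essentially the paper's approach: the paper simply identifies the structure as the projective model structure on $\Fun(R(P)^{\op},\sS)$ and cites Hirschhorn, while you reproduce the standard proof of that result (identifying $K^{\Delta^{\varphi}}$ as the left adjoint $F_{\Delta^{\varphi}}$ applied to $K$, then running the recognition theorem). Your combinatorial description of $R(P)$ as a poset and the identification of the generating sets with the images of the generating (trivial) cofibrations of $\sS$ under the $F_{\Delta^{\varphi}}$ are exactly right.
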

\begin{proof}
This is simply the projective model structure on $\Fun(R(P)^{\op},\sS)$. See \cite[Theorem 11.6.1]{Hirschhorn}.
\end{proof}

\begin{defin}
We define the bi-functor $-\otimes-\colon \sS\times \sS_P\to \sS_P$ as follows : 
\begin{align*}
-\otimes-\colon \sS\times \sS_P&\to \sS_P\\
K\otimes \fil{X}&\mapsto (K\times X,\varphi_X\circ \pr_X)\\
(f\colon K\to L)\otimes (g\colon \fil{X}\to\fil{Y})&\mapsto f\times g\colon K\otimes\fil{X}\to L\otimes\fil{Y}
\end{align*}
\end{defin}

\begin{remarque}
We will also write $-\otimes-$ for the bi-functor $\Top\times\Top_{N(P)}\to \Top_{N(P)}$ defined in a similar way. The conflict of notation will not be problematic because, thanks to the classical result of J. Milnor \cite{MilnorGeometricRealization}, one has $\RealNP{K\otimes \fil{X}}\simeq \Real{K}\otimes\RealNP{\fil{X}},$. if either $K$ or $X$ has only finitely many non-degenerate simplices.
\end{remarque}

\begin{defin}\label{DefinitionColim}
Let $\mathcal{C}\subset R(P)^{\op}\times R(P)$ be the full subcategory whose objects are of the form $(\Delta^{\varphi},\Delta^{\psi})$ with $\Delta^{\psi}\subseteq \Delta^{\varphi}$.
Define the functor $-\otimes R(P)$ as follows
\begin{align*}
-\otimes R(P)\colon \Diag_P&\to \Fun(\mathcal{C},\sS_P)\\
F&\mapsto \left\{\begin{array}{ccc}
\mathcal{C}&\to & \sS_P\\
(\Delta^{\varphi},\Delta^{\psi})&\mapsto &F(\Delta^{\varphi})\otimes \Delta^{\psi}
\end{array}\right.
\end{align*}
Define the functor $\Colim\colon \Diag_P\to \Top_{N(P)}$ as follows \begin{align*}
\Colim\colon \Diag_P&\to \Top_{N(P)}\\
F&\mapsto \colim_{\C}\RealNP{F\otimes R(P)}
\end{align*}
\end{defin}

\begin{prop}
The functor $\Colim\colon \Diag_P\to \Top_{N(P)}$ is left adjoint to the functor $D\colon \Top_{N(P)}\to\Diag_P$.
\end{prop}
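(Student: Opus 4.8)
The plan is to establish the adjunction $\Colim \dashv D$ by exhibiting a natural bijection between $\Hom_{\Top_{N(P)}}(\Colim(F), \fil{X})$ and $\Hom_{\Diag_P}(F, D(\fil{X}))$ for any diagram $F \in \Diag_P$ and any strongly filtered space $\fil{X}$. The strategy is to unwind both sides using the definitions of $\Colim$ and $D$ and the adjunctions already established in the excerpt, namely $\RealNP{-} \dashv \Sing_{N(P)}$ and the (simplicial-set level) adjunction $\Sing \dashv \Real{-}$.

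First I would unwind the left-hand side. By the definition of $\Colim$, we have $\Colim(F) = \colim_{\C} \RealNP{F \otimes R(P)}$, so by the universal property of colimits, $\Hom_{\Top_{N(P)}}(\Colim(F), \fil{X}) \cong \lim_{\C^{\op}} \Hom_{\Top_{N(P)}}\bigl(\RealNP{F(\Delta^{\varphi}) \otimes \Delta^{\psi}}, \fil{X}\bigr)$, where the limit is over pairs $(\Delta^{\varphi}, \Delta^{\psi})$ with $\Delta^{\psi} \subseteq \Delta^{\varphi}$. Using the adjunction $\RealNP{-} \dashv \Sing_{N(P)}$, each term becomes $\Hom_{\sS_P}\bigl(F(\Delta^{\varphi}) \otimes \Delta^{\psi}, \Sing_{N(P)}(\fil{X})\bigr)$. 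Then one uses the tensor-hom type adjunction relating $-\otimes-\colon \sS \times \sS_P \to \sS_P$ to the mapping space: a filtered map $F(\Delta^{\varphi}) \times \Delta^{\psi} \to \Sing_{N(P)}(\fil{X})$ (with the appropriate filtration) corresponds to a simplicial map $F(\Delta^{\varphi}) \to \Map(\Delta^{\psi}, \dots)$; combined with the pullback description of $\Sing_{N(P)}$ and the identification of simplicial mapping spaces with $\Sing$ of the corresponding topological mapping spaces, this term is recognized as $\Hom_{\sS}\bigl(F(\Delta^{\varphi}), \Sing(\CNP(\RealNP{\Delta^{\psi}}, \fil{X}))\bigr) = \Hom_{\sS}\bigl(F(\Delta^{\varphi}), D(\fil{X})(\Delta^{\psi})\bigr)$. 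Taking the limit over $\C^{\op}$, the data of a compatible family of simplicial maps $F(\Delta^{\varphi}) \to D(\fil{X})(\Delta^{\psi})$ indexed by $\Delta^{\psi} \subseteq \Delta^{\varphi}$, natural in both variables, is exactly the data of a natural transformation $F \Rightarrow D(\fil{X})$ of functors $R(P)^{\op} \to \sS$ — this is the coend/end manipulation identifying $\lim_{\C^{\op}} \Hom(F(\Delta^{\varphi}), G(\Delta^{\psi}))$ with $\Hom_{\Fun(R(P)^{\op},\sS)}(F, G)$, valid because $\C^{\op}$ is the twisted-arrow-like category whose limit computes natural transformations. Hence the right-hand side is $\Hom_{\Diag_P}(F, D(\fil{X}))$.

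Throughout, I would check naturality in $F$ and in $\fil{X}$, which follows automatically since each isomorphism used is itself natural. The main obstacle I anticipate is the bookkeeping in the middle step: correctly matching the filtration on $F(\Delta^{\varphi}) \otimes \Delta^{\psi}$ (induced via $\varphi_X \circ \pr_X$ composed with the structure map $\Delta^{\psi} \to \Delta^{\varphi} \to N(P)$) against the pullback defining $\Sing_{N(P)}(\fil{X})$, so that the resulting topological mapping space is precisely $\CNP(\RealNP{\Delta^{\psi}}, \fil{X})$ rather than the unfiltered one — in other words, verifying that the filtered structure is carried along correctly under all the adjunctions. Once this compatibility is pinned down, the identification of the limit over $\C^{\op}$ with a set of natural transformations is a formal categorical fact (and is essentially the statement that $\Colim$ is the left Kan extension of $\RealNP{-} \otimes R(P)$, which is why this adjunction holds in the first place). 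The technical colimit fact deferred to Appendix \ref{AppendixColimits} is used elsewhere, not here; this proof is purely formal once the definitions are spelled out.
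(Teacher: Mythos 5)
Your argument is correct and is essentially the paper's proof read in the opposite direction: both reduce the adjunction to the formal chain of adjunctions $(\Real{-},\Sing)$ and the tensor–hom adjunction $-\otimes\RealNP{\Delta^{\varphi}}\dashv\CNP(\RealNP{\Delta^{\varphi}},-)$ (Lemma \ref{LemmeAdjunctionRealC}), identifying maps out of $\Colim(F)$ with coherent families $\RealNP{F(\Delta^{\varphi})\otimes\Delta^{\varphi}}\to\fil{X}$, i.e.\ with natural transformations $F\Rightarrow D\fil{X}$. You are somewhat more explicit than the paper about why the limit over $\C^{\op}$ computes natural transformations, but the substance is the same.
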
  
\begin{proof}
Let $F\in \Diag_P$ be a functor and $\fil{X}$ be a strongly stratified space over $P$. A map $f\colon F\to D\fil{X}$ is the coherent data of a map of simplicial sets \begin{equation*}
f_{\Delta^{\varphi}}\colon F(\Delta^{\varphi})\to \Sing\left(\C^0_{N(P)}\left(\RealNP{\Delta^{\varphi}},\fil{X}\right)\right),
\end{equation*}
 for all $\Delta^{\varphi}\in R(P)$. By the adjunction $(\Real{-},\Sing)$, this is equivalent to the coherent data of maps
\begin{equation*}
\Real{F(\Delta^{\varphi})}\to \C^0_{N(P)}\left(\RealNP{\Delta^{\varphi}},\fil{X}\right),
\end{equation*} 
By Lemma \ref{LemmeAdjunctionRealC}, and using the fact that $\Real{F(\Delta^{\varphi})}\otimes \RealNP{\Delta^{\varphi}}\simeq \RealNP{F(\Delta^{\varphi})\otimes\Delta^{\varphi}}$ this is equivalent to the coherent data of maps
\begin{equation*}
\RealNP{F(\Delta^{\varphi})\otimes\Delta^{\varphi}}\to \fil{X}
\end{equation*} 
Which corresponds to a map $\Colim(F)\to \fil{X}$.
\end{proof}

\begin{lemme}\label{LemmeAdjunctionRealC}
For all $\Delta^{\varphi}\in R(P)$, the functor $-\otimes\RealNP{\Delta^{\varphi}}$ is left adjoint to the functor $\C^0_{N(P)}(\RealNP{\Delta^{\varphi}},-)$.
\end{lemme}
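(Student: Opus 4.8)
The statement is the relative (i.e.\ fibered over $\Real{N(P)}$) form of the exponential adjunction in the cartesian closed category $\Top$ of $\Delta$-generated spaces, and the plan is to transport that adjunction through the defining (sub)objects. Write $\varphi\colon\Delta^n\to N(P)$ for the non-degenerate simplex underlying $\Delta^{\varphi}$, so that $\RealNP{\Delta^{\varphi}}=(\Real{\Delta^n},\Real{\varphi})$ and, for a space $Z\in\Top$, the bifunctor $-\otimes-$ gives $Z\otimes\RealNP{\Delta^{\varphi}}=(Z\times\Real{\Delta^n},\Real{\varphi}\circ\pr_{\Real{\Delta^n}})$, with the product taken in $\Top$.

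First I would unpack the source of the claimed bijection. Fix $Z\in\Top$ and $\fil{X}\in\TopNP$. By the definition of $\TopNP$, an element of $\Hom_{\TopNP}(Z\otimes\RealNP{\Delta^{\varphi}},\fil{X})$ is exactly a continuous map $g\colon Z\times\Real{\Delta^n}\to X$ satisfying $\varphi_X\circ g=\Real{\varphi}\circ\pr_{\Real{\Delta^n}}$. Since $\Top$ is cartesian closed, the exponential adjunction $-\times\Real{\Delta^n}\dashv\C^0(\Real{\Delta^n},-)$ provides a bijection, natural in $Z$ and $X$, between continuous maps $g$ (forgetting the fibering condition) and continuous maps $\widehat{g}\colon Z\to\C^0(\Real{\Delta^n},X)$.

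Next I would identify which $\widehat{g}$ come from fibered $g$. For $z\in Z$ and $t\in\Real{\Delta^n}$ one has $\varphi_X(\widehat{g}(z)(t))=\varphi_X(g(z,t))$, so the equality $\varphi_X\circ g=\Real{\varphi}\circ\pr_{\Real{\Delta^n}}$ holds if and only if $\varphi_X\circ\widehat{g}(z)=\Real{\varphi}$ for every $z\in Z$, i.e.\ if and only if $\widehat{g}(z)$ lies in $\Hom_{\TopNP}(\RealNP{\Delta^{\varphi}},\fil{X})$ for every $z$. Equivalently, the underlying function of $\widehat{g}$ factors through the inclusion $\Hom_{\TopNP}(\RealNP{\Delta^{\varphi}},\fil{X})\subseteq\C^0(\Real{\Delta^n},X)$. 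Since $\CNP(\RealNP{\Delta^{\varphi}},\fil{X})$ is by definition this set equipped with the topology it inherits from $\C^0(\Real{\Delta^n},X)$, it remains only to check that a continuous map $\widehat{g}\colon Z\to\C^0(\Real{\Delta^n},X)$ which lands set-theoretically in the subset is automatically continuous into $\CNP(\RealNP{\Delta^{\varphi}},\fil{X})$, and conversely; this is immediate for the subspace topology, and the passage to its $\Delta$-generated replacement (needed to stay inside $\Top$) is invisible here because $Z$ is already $\Delta$-generated and $\Top$ is coreflective in all topological spaces. Composing the two bijections, and noting that both are natural in $Z$ and in $\fil{X}$ — the first by naturality of the exponential adjunction, the second because the factorization condition and the induced topology are functorial in $\fil{X}$ — yields the adjunction $Z\otimes\RealNP{\Delta^{\varphi}}\dashv\CNP(\RealNP{\Delta^{\varphi}},-)$.

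I do not expect a genuine obstacle: once the exponential adjunction for $\Delta$-generated spaces is in hand, the argument is bookkeeping. The only point requiring a little care is the last step, namely the compatibility of the mapping-space topologies — checking that the transpose of a fibered map is continuous into $\CNP(\RealNP{\Delta^{\varphi}},\fil{X})$ with its subspace ($\Delta$-generated) topology and conversely — which is precisely where coreflectivity of $\Top$ in topological spaces is used.
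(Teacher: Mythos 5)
Your argument is correct and is exactly the paper's proof, merely written out in full: the paper's proof consists of the single remark that the lemma ``follows from the definition of $\C^0_{N(P)}$ and the fact that $\Top$ is cartesian closed,'' and your transposition-plus-restriction-to-fibered-maps bookkeeping is the intended expansion of that remark. No discrepancy to report.
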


\begin{proof}
This follows from the definition of $\C^0_{N(P)}$ and the fact that $\Top$ is cartesian closed (recall that $\Top$ stands for the category of $\Delta$-generated space, see \cite{Dugger}).
\end{proof}

\subsection{The model category $\Top_{N(P)}$}
\label{SubsectionCMFTopNP}
In this section, we prove the following theorem.
\begin{theo}\label{TheoremeCMFTopNP}
There exists a cofibrantly generated model structure on $\Top_{N(P)}$ where a map $f\colon \fil{X}\to\fil{Y}$ is
\begin{itemize}
\item a fibration if $D(f)$ is a fibration,
\item a weak-equivalence if $D(f)$ is a weak-equivalence,
\item a cofibration if it has the left lifting property against all trivial fibrations.
\end{itemize}
The set of generating cofibrations and trivial cofibrations are
\begin{itemize}
\item $
I=\left\{\RealNP{\partial(\Delta^n)\otimes \Delta^{\varphi}\to \Delta^n\otimes\Delta^{\varphi}}\ |\ n\geq 0,\ \Delta^{\varphi}\in R(P)\right\}
$
\item $
J=\left\{\RealNP{\Lambda^n_k\otimes \Delta^{\varphi}\to \Delta^n\otimes\Delta^{\varphi}}\ |\ 0\leq k\leq n\ n> 0,\ \Delta^{\varphi}\in R(P)\right\}
$
\end{itemize}
\end{theo}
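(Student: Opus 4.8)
The plan is to transport the projective model structure on $\Diag_P$ along the adjunction $\Colim\colon \Diag_P\leftrightarrow\Top_{N(P)}\colon D$ using the transfer principle for cofibrantly generated model structures (as cited, \cite[Corollary 3.3.4]{Hess}, or equivalently the crude transfer / Kan's lemma). Since $\Diag_P$ is cofibrantly generated with generating (trivial) cofibrations $I_{\Diag},J_{\Diag}$ as recalled in the previous subsection, the candidate transferred structure on $\Top_{N(P)}$ has generating (trivial) cofibrations $\Colim(I_{\Diag}),\Colim(J_{\Diag})$; and one computes directly that $\Colim((\partial\Delta^n\to\Delta^n)^{\Delta^\varphi}) = \RealNP{\partial\Delta^n\otimes\Delta^\varphi\to\Delta^n\otimes\Delta^\varphi}$ and similarly for the horn inclusions, which is exactly the sets $I,J$ in the statement — this identification uses the definition of $\Colim$ together with the fact that $R(\Im\varphi)$ has a terminal object $\Delta^\varphi$, so the colimit over $\mathcal{C}$ restricted to the support of $K^{\Delta^\varphi}$ collapses to $\RealNP{K\otimes\Delta^\varphi}$. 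By definition, the fibrations and weak equivalences of the transferred structure are the maps $f$ with $D(f)$ a fibration (resp. weak equivalence), and cofibrations are those with the left lifting property against trivial fibrations, matching the statement.

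**The verification** of the transfer hypotheses proceeds in the usual steps. First, $\Top_{N(P)}$ is complete and cocomplete: it is a category of filtered objects over $P$ in $\Top$, and since $\Top$ (the $\Delta$-generated spaces) is locally presentable, so is $\Top_{N(P)}$ as a slice-type construction, giving all limits and colimits. Second, the domains of $J$ are small relative to $J$-cell complexes (this is automatic in a locally presentable category, or follows from compactness of the finite simplicial sets involved and the fact that $\RealNP{-}$ preserves the relevant colimits). Third — the crucial point — one must check the \emph{acyclicity condition}: every relative $J$-cell complex is a weak equivalence, i.e. becomes a weak equivalence after applying $D$. For this, the standard strategy is to show that $D$ sends pushouts of maps in $J$ to weak equivalences and commutes with the transfinite composites and coproducts appearing in $J$-cell complexes. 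The commutation with filtered colimits reduces to knowing that $D(\fil{X}) = \Sing(\CNP(\RealNP{-},\fil{X}))$, and $\Sing$ together with $\CNP(\RealNP{\Delta^\varphi},-)$ (which is a right adjoint by Lemma \ref{LemmeAdjunctionRealC}) must be shown to preserve the directed colimits in play — here $\Delta$-generation and the finiteness of $\Delta^\varphi$ are what make this work, so one argues that $\CNP(\RealNP{\Delta^\varphi},-)$ preserves filtered colimits along closed $T_1$-inclusions, or restricts attention to the specific cell attachments.

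**The main obstacle** is precisely this acyclicity condition, and within it, showing that a single pushout of $\RealNP{\Lambda^n_k\otimes\Delta^\varphi\to\Delta^n\otimes\Delta^\varphi}$ along an arbitrary map into $\fil{X}$ induces a weak equivalence on $D$. The natural approach is a relative version of the Quillen equivalence argument: attaching such a cell in $\Top_{N(P)}$ corresponds, under $\Colim\dashv D$, to attaching the cell $(\Lambda^n_k\to\Delta^n)^{\Delta^\varphi}$ in $\Diag_P$, and since that is a trivial cofibration in the projective model structure, one wants $D$ of the pushout to agree (up to weak equivalence) with the $\Diag_P$-pushout. This is essentially a compatibility statement between the unit/counit of $\Colim\dashv D$ and the projective weak equivalences, and it is the same computation that underlies Theorem \ref{TheoQETopNPDiagP}; morally, one shows the derived unit is an equivalence on the relevant cofibrant objects. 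A clean way to organize this is: (i) observe $D$ preserves the class of fibrations and trivial fibrations essentially by definition (these are detected objectwise, and $\CNP(\RealNP{\Delta^\varphi},-)$ takes fibrations of filtered spaces to Kan fibrations — this needs a small argument using the lifting characterization and Lemma \ref{LemmeAdjunctionRealC}); (ii) deduce that $\Colim$ sends $I$ and $J$ into cofibrations and into maps inducing weak equivalences on $D$ by adjunction from the objectwise lifting properties in $\sS$; (iii) conclude the acyclicity condition since a $J$-cell complex is built from such maps and $D$ commutes with the colimits. The remaining model-category axioms (the retract, 2-out-of-3, and lifting axioms) then follow formally from the transfer theorem.

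Once these are in place, cofibrant generation with the stated sets $I$ and $J$ is immediate from the transfer, completing the proof; the characterization of weak equivalences via stratified homotopy groups (as in Theorem \ref{TheoIntroTopNP}) is then obtained by unwinding that $D(\fil{X})(\Delta^\varphi) = \Sing(\CNP(\RealNP{\Delta^\varphi},\fil{X}))$ computes, on homotopy groups, the stratified homotopy groups $s\pi_k(\fil{X},\phi)$ indexed by pointings $\phi$.
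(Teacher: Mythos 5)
Your overall framework is the right one: the paper does transport the projective model structure along $\Colim\dashv D$ using \cite[Corollary 3.3.4]{Hess}, the categorical hypotheses are exactly Lemma \ref{LemmeHypotheseCategoriques} (local presentability of $\Top_{N(P)}$ and accessibility of $\Diag_P$), and the generating sets are obtained by applying $\Colim$ to the generating sets of $\Diag_P$ as you compute. You also correctly identify the acyclicity condition as the only substantive point. However, the argument you offer for acyclicity has a genuine gap. You propose to show that a pushout of $\RealNP{\Lambda^n_k\otimes\Delta^{\varphi}\to\Delta^n\otimes\Delta^{\varphi}}$ induces a weak equivalence on $D$ by arguing that ``$D$ of the pushout agrees with the $\Diag_P$-pushout'' and that ``$D$ commutes with the colimits.'' But $D$ is a \emph{right} adjoint built from mapping spaces $\CNP(\RealNP{\Delta^{\varphi}},-)$; it has no reason to preserve pushouts, and this failure is precisely what makes transferred model structures nontrivial. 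Your step (ii) likewise conflates ``$\Colim(j)$ has the left lifting property against all fibrations'' (which \emph{is} automatic by adjunction) with ``$D(\Colim(j))$ is a weak equivalence'' (which is the acyclicity condition itself), and your appeal to ``the derived unit being an equivalence'' is circular, since that statement (Theorem \ref{TheoQETopNPDiagP}) presupposes the model structure whose existence you are proving.

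The paper closes this gap by a different and more concrete route, using the \emph{other} standard form of the acyclicity condition from \cite[Corollary 3.3.4]{Hess}: it suffices to show that every map with the left lifting property against all fibrations is a weak equivalence. This is done via Lemma \ref{LemmeFactorisationRetract}, an explicit mapping-path-space construction: any $f\colon\fil{X}\to\fil{Y}$ factors as $q\circ i$ where $i$ is a filtered deformation-retract inclusion (hence induces isomorphisms on all stratified homotopy groups) and $q$ is a fibration; the proof that $q$ is a fibration is an explicit lift built from the deformation retraction of $\Real{\Delta^n}$ onto $\Real{\Lambda^n_k}$. Given $f$ with the lifting property, one lifts against $q$ and runs a retract argument on $s\pi_n$ to conclude $f$ is a weak equivalence. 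If you want to salvage your cell-by-cell approach instead, the missing ingredient is to show that each pushout of a generating trivial cofibration is itself a filtered deformation retract (pushouts of deformation-retract inclusions along arbitrary maps remain such), and that $\CNP(\RealNP{\Delta^{\varphi}},-)$ takes the relevant transfinite composites to colimits by compactness of $\RealNP{\Delta^{\varphi}}$; neither step is carried out in your proposal.
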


\begin{remarque}\label{RemarqueWeakEquivalenceIsoGroupesHomtopies}
A weak-equivalence in $\Top_{N(P)}$ is a map, $f\colon\fil{X}\to\fil{Y}$ inducing weak-equivalences $D(f)(\Delta^{\varphi})\colon D\fil{X}(\Delta^{\varphi})\to D\fil{Y}(\Delta^{\varphi})$ for all $\Delta^{\varphi}\in N(P)$. Equivalently, $f$ is a weak-equivalence, if and only if it induces isomorphisms of functors $s\pi_n(f)\colon s\pi_n(\fil{X},\phi)\to s\pi_n(\fil{Y},f\circ\phi)$, for all pointings of $X$ and for all $n$.
\end{remarque}

\begin{proof}
By lemma \ref{LemmeHypotheseCategoriques}, \cite[Corollary 3.3.4]{Hess} applies and  we only need to show that any filtered map having the left lifting property against any fibration is a weak-equivalence (in the sense of Theorem \ref{TheoremeCMFTopNP}). Let $f\colon\fil{X}\to\fil{Y}$ be such a filtered map. By lemma \ref{LemmeFactorisationRetract}, $f$ admits a factorization of the form 
\begin{equation*}
\begin{tikzcd}
\fil{X}
\arrow[swap]{r}{i}
&\fil{Z}
\arrow[swap,bend right,shift right = 1]{l}{r}
\arrow{r}{q}
&\fil{Y}
\end{tikzcd}
\end{equation*}
Consider the following lifting problem
\begin{equation*}
\begin{tikzcd}
\fil{X}
\arrow{r}{i}
\arrow[swap]{d}{f}
&\fil{Z}
\arrow{d}{q}
\\
\fil{Y}
\arrow[swap]{r}{\Id_Y}
\arrow[dashrightarrow]{ur}{g}
&\fil{Y}
\end{tikzcd}
\end{equation*}
By lemma \ref{LemmeFactorisationRetract}, $q$ is a fibration, hence there exists a lift $g\colon \fil{Y}\to\fil{Z}$. Fixing some pointing of $X$, $\phi$ and passing to stratified homotopy groups, we get
\begin{equation*}
\begin{tikzcd}
s\pi_n(\fil{X},\phi)
\arrow{r}{s\pi_n(i)}
\arrow[swap]{d}{s\pi_n(f)}
&s\pi_n(\fil{Z},i\circ\phi)
\arrow{d}{s\pi_n(q)}
\\
s\pi_n(\fil{Y},f\circ\phi)
\arrow[swap]{r}{\Id}
\arrow{ur}{s\pi_n(g)}
&s\pi_n(\fil{Y},f\circ\phi)
\end{tikzcd}
\end{equation*}
Since $\Id$ and $s\pi_n(i)$ are isomorphisms (By lemma \ref{LemmeFactorisationRetract}), so is $s\pi_n(g)$ since it admits left and right inverses. But then, $s\pi_n(f)$ is an isomorphism with inverse $\left(s\pi_n(i)\right)^{-1}s\pi_n(g)$. By Remarque \ref{RemarqueWeakEquivalenceIsoGroupesHomtopies}, this implies that $f$ is a weak-equivalence, which concludes the proof of the first part of theorem \ref{TheoremeCMFTopNP}. The set of generating cofibrations and trivial cofibrations are then obtained by applying the functor $\Colim$ to the corresponding generating set in $\Diag_P$.
\end{proof}

\begin{lemme}\label{LemmeHypotheseCategoriques}
The model category $\Diag_P$ is accessible, and the category $Top_{N(P)}$ is locally presentable.
\end{lemme}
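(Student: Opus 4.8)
The plan is to prove the two assertions separately, both by reduction to standard facts about simplicial sets and their realizations. For accessibility of $\Diag_P$: the category $R(P)$ is small (its objects are the non-degenerate simplices of $N(P)$), and $\sS$ with its Kan model structure is a combinatorial — hence accessible — model category. It is a standard result (see e.g.\ \cite[Theorem 11.6.1]{Hirschhorn} together with the fact that the projective model structure on diagrams in a combinatorial model category is again combinatorial) that $\Fun(R(P)^{\op},\sS) = \Diag_P$ with the projective model structure is combinatorial, in particular accessible. So this part is essentially a citation; I would just spell out that $\Diag_P$ is a presheaf category on a small category valued in a locally presentable category, hence locally presentable, and that the projective model structure is cofibrantly generated with accessible classes of (trivial) cofibrations and weak equivalences.

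For local presentability of $\Top_{N(P)}$: recall that $\Top$ denotes $\Delta$-generated spaces, which is locally presentable (this is precisely why the paper adopts that convention, as noted in Section \ref{SubsectionStrStrat}). The category $\Top_{N(P)}$ is the category of objects of $\Top$ equipped with a map to the fixed object $\Real{N(P)}$, i.e.\ the slice category $\Top_{/\Real{N(P)}}$. Slices of locally presentable categories by a fixed object are again locally presentable (a slice $\C_{/c}$ is the category of algebras for an accessible monad, or more directly: it is accessible, being the inserter/comma construction of two accessible functors, and cocomplete since $\C$ is). Hence $\Top_{N(P)} = \Top_{/\Real{N(P)}}$ is locally presentable.

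The only genuinely routine point to check is that the functor forming the slice does not disturb presentability; I expect no obstacle there, since $\lambda$-filtered colimits in $\Top_{/\Real{N(P)}}$ are computed in $\Top$, and the representable-style argument identifying a dense generator of $\Top_{/\Real{N(P)}}$ out of one for $\Top$ is immediate. The main subtlety — and the reason the remark about $\Delta$-generated spaces was inserted earlier — is simply that for the \emph{usual} category of topological spaces this would fail, so one must be careful to invoke $\Delta$-generatedness; with that in hand, both halves of the lemma are formal.

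Thus the proof reduces to: (i) cite combinatoriality of the projective model structure on $\Diag_P$ for accessibility; (ii) identify $\Top_{N(P)}$ with the slice $\Top_{/\Real{N(P)}}$ and invoke closure of locally presentable categories under slicing, using that $\Top$ is locally presentable. No step requires significant new work; the "hard part'' is only bookkeeping to make sure the hypotheses of \cite[Corollary 3.3.4]{Hess} (invoked in the proof of Theorem \ref{TheoremeCMFTopNP}) are met, namely accessibility of $\Diag_P$ together with local presentability — hence in particular cocompleteness and completeness — of $\Top_{N(P)}$.
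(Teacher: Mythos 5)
Your proposal is correct and follows the same route as the paper: accessibility of $\Diag_P$ via its being a combinatorial (cofibrantly generated, locally presentable) model category, and local presentability of $\Top_{N(P)}$ by identifying it with the slice $\Top/\Real{N(P)}$ of the locally presentable category of $\Delta$-generated spaces. The paper's proof is just a terser version of the same two citations.
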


\begin{proof}
The model category $Diag_P$ is cofibrantly generated, and locally presentable so it is accessible (see \cite{Hess}). In this text, $\Top$ is the category of $\Delta$-generated spaces, which is locally presentable (see \cite{Dugger}). Since $\Top_{N(P)}=\Top/\Real{N(P)}$, it is also locally presentable. 
\end{proof}

\begin{lemme}\label{LemmeFactorisationRetract}
Let $f\colon \fil{X}\to\fil{Y}$ be a filtered map. There exists a strongly filtered space, $\fil{Z}$, a fibration $q\colon \fil{Z}\to\fil{Y}$ , a filtered map $i\colon \fil{X}\to\fil{Z}$ and a filtered map $r\colon \fil{Z}\to\fil{X}$ such that 
\begin{itemize}
\item $f=q\circ i$,
\item $r\circ i=\Id_X$ and $i\circ r$ is filtered homotopic to $\Id_Z$.
\end{itemize}
\end{lemme}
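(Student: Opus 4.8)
The plan is to produce $\fil{Z}$ as a stratified mapping path space, mimicking the classical construction of the path-space factorization of a map into a (Hurewicz) fibration up to homotopy, but carried out internally to $\Top_{N(P)}$ using the cartesian-closedness of $\Top$ recorded in Lemma \ref{LemmeAdjunctionRealC}. Concretely, let $Y^{[0,1]}$ denote the space of paths in $Y$ whose image under $\varphi_Y$ is \emph{constant}; equivalently, $Y^{[0,1]}$ is the pullback of $\C^0([0,1],Y)\to\C^0([0,1],\Real{N(P)})$ along the constant-path inclusion $\Real{N(P)}\to\C^0([0,1],\Real{N(P)})$, which carries a natural strong stratification by evaluation at $0$ (equivalently at any point of $[0,1]$). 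Then set
\begin{equation*}
\fil{Z}=\fil{X}\times_{\fil{Y}} Y^{[0,1]},
\end{equation*}
the pullback along $f\colon\fil{X}\to\fil{Y}$ and $\ev_0\colon Y^{[0,1]}\to\fil{Y}$, which is a strongly filtered space since $\Top_{N(P)}=\Top/\Real{N(P)}$ is complete. Define $i\colon\fil{X}\to\fil{Z}$ by $x\mapsto(x,\text{constant path at }f(x))$, $r\colon\fil{Z}\to\fil{X}$ by projection to the first factor, and $q\colon\fil{Z}\to\fil{Y}$ by $(x,\gamma)\mapsto\gamma(1)=\ev_1(\gamma)$. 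All of these are strongly filtered maps: the constant-path condition guarantees that $\varphi_Y\circ\ev_1$ agrees with $\varphi_Y\circ\ev_0$ on $Y^{[0,1]}$, so $q$ respects the stratification, and $i$, $r$ visibly do.

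The verifications then proceed as follows. First, $f=q\circ i$ is immediate from the definitions. Second, $r\circ i=\Id_X$ is immediate, and the homotopy $i\circ r\simeq\Id_Z$ is the standard one: on $\fil{Z}\times[0,1]$ send $((x,\gamma),s)$ to $(x,\gamma_s)$ where $\gamma_s(u)=\gamma(su)$ (reparametrization shrinking $\gamma$ toward its value at $0$); since $\gamma$ has constant $\varphi_Y$-image, so does each $\gamma_s$, hence $\gamma_s\in Y^{[0,1]}$ and the homotopy is filtered in the sense of the definition of stratified homotopy (the poset component is $\Id_P$ throughout). Third, and this is the one step requiring genuine care, we must show $q$ is a fibration in the sense of Theorem \ref{TheoremeCMFTopNP}, i.e.\ that $D(q)(\Delta^{\varphi})$ is a Kan fibration for every $\Delta^{\varphi}\in R(P)$. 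By definition $D(q)(\Delta^{\varphi})=\Sing$ applied to $\CNP(\RealNP{\Delta^{\varphi}},q)$, and a $\Sing$ of a map of spaces is a Kan fibration as soon as the map of spaces is a Serre (indeed Hurewicz) fibration. So it suffices to show that $\CNP(\RealNP{\Delta^{\varphi}},q)\colon\CNP(\RealNP{\Delta^{\varphi}},\fil{Z})\to\CNP(\RealNP{\Delta^{\varphi}},\fil{Y})$ has the right lifting property against $\Real{\Lambda^n_k}\hookrightarrow\Real{\Delta^n}$; by Lemma \ref{LemmeAdjunctionRealC} this is equivalent to $q$ itself having the right lifting property in $\Top_{N(P)}$ against $\RealNP{\Lambda^n_k\otimes\Delta^{\varphi}\to\Delta^n\otimes\Delta^{\varphi}}$, i.e.\ against $\RealNP{\Delta^{\varphi}}\times(\Real{\Lambda^n_k}\hookrightarrow\Real{\Delta^n})$.

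The main obstacle is thus exactly this lifting property, and the point is that it reduces to the \emph{classical} fact that $\ev_1\colon Y^{[0,1]}\to Y$ (path fibration) and more generally the pullback $q$ are Hurewicz fibrations of underlying spaces — but carried out \emph{compatibly with the stratification map to $\Real{N(P)}$}. Here the constant-$\varphi_Y$-image condition is what makes everything work: a lifting problem for $q$ against $A\times\{0\}\cup(\Real{\Delta^{\varphi}})\times[0,1]\hookrightarrow\ldots$ unwinds, via the pullback defining $\fil{Z}$ and the exponential adjunction, to the problem of extending a homotopy in $Y$ whose $\varphi_Y$-track is prescribed and constant in the path direction; one solves it by the usual retraction $\Real{\Delta^n}\times[0,1]\to\Real{\Delta^n}\times\{0\}\cup\Real{\Lambda^n_k}\times[0,1]$ applied fiberwise, and observes that because the $\varphi_Y$-component of the data is constant along $[0,1]$ the lift one produces automatically lands in the subspace $Y^{[0,1]}$ of paths with constant image, and is strongly filtered. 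I would isolate this as the technical heart of the argument, stating it as the assertion that $\ev_1\colon Y^{[0,1]}\to\fil{Y}$ (with the constant-image path space) is a fibration in $\Top_{N(P)}$, deducing the case of $q$ by stability of fibrations under pullback, and then concluding via the $(\Real{-},\Sing)$ and Lemma \ref{LemmeAdjunctionRealC} adjunctions that $D(q)$ is a fibration of diagrams.
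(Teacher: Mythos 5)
Your construction is essentially identical to the paper's: the same stratified mapping path space $Z=\{(x,\gamma)\mid f(x)=\gamma(0),\ \varphi_Y(\gamma(t))=\varphi_X(x)\}$, the same maps $i$, $r$, $q$, the same reparametrization homotopy $\gamma\mapsto\gamma(s\cdot)$, and the same verification that $q$ is a fibration by solving the lifting problem against $\RealNP{\Lambda^n_k\otimes\Delta^{\varphi}}\to\RealNP{\Delta^n\otimes\Delta^{\varphi}}$ via a deformation retraction of $\Real{\Delta^n}$ onto $\Real{\Lambda^n_k}$, with the constant-$\varphi_Y$-image condition ensuring the lift stays filtered. One caveat: $q$ is \emph{not} literally a base change of $\ev_1\colon Y^{[0,1]}\to\fil{Y}$ (the map $Z\to Y^{[0,1]}$ is a pullback of $f$, not a fibration), so the phrase ``deducing the case of $q$ by stability of fibrations under pullback'' should be replaced either by the factorization of $(r,q)$ through $\fil{X}\times_{\Real{N(P)}}\fil{Y}$ or, as you also describe and as the paper does, by solving the lifting problem for $q$ directly with an explicit concatenation-of-paths formula.
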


\begin{proof}
Let $f\colon\fil{X}\to\fil{Y}$ be a filtered map. Define the topological space $Z$ as the following subspace of $X\times Y^{[0,1]}$ :
\begin{equation*}
Z=\{(x,\gamma)\in X\times Y^{[0,1]}\ |\ f(x)=\gamma(0),\ \varphi_Y(\gamma(t))=\varphi_X(x),\ \forall t\in [0,1]\},
\end{equation*}
and $\varphi_Z\colon Z\to \Real{N(P)}$ as the following composition
\begin{equation*}
\begin{tikzcd}
Z
\arrow[hookrightarrow]{r}
&X\times Y^{[0,1]}
\arrow{r}{\pr_X}
&X
\arrow{r}{\varphi_X}
&\Real{N(P)}
\end{tikzcd}
\end{equation*}
Define $i,q$ and $r$ as follows
\begin{align*}
i\colon \fil{X}&\to\fil{Z}\\
x&\mapsto (x,t\mapsto f(x))\\
\phantom{X}&\phantom{X}\\
q\colon \fil{Z}&\to\fil{Y}\\
(x,\gamma)&\mapsto \gamma(1)\\
\phantom{X}&\phantom{X}\\
r\colon \fil{Z}&\to\fil{X}\\
(x,\gamma)&\mapsto x
\end{align*}
One has $f=q\circ i$ and $r\circ i=\Id_X$, and the filtered map
\begin{align*}
(Z\times [0,1],\varphi_Z\circ\pr_Z)&\to \fil{Z}\\
((x,\gamma),s)&\mapsto (x,(t\mapsto \gamma(st))
\end{align*}
is a filtered homotopy between $i\circ r$ and $\Id_Z$. The only thing left to check is that $q$ is a fibration. Consider the following lifting problem
\begin{equation}\label{DiagrammeCommutatifPreuveTopNP}
\begin{tikzcd}
\Real{\Lambda^n_k}\otimes\RealNP{\Delta^{\varphi}}
\arrow{r}{\alpha}
\arrow[swap]{d}{j}
&\fil{Z}
\arrow{d}{q}
\\
\Real{\Delta^n}\otimes\RealNP{\Delta^{\varphi}}
\arrow{r}{\beta}
\arrow[dashrightarrow]{ur}{h}
&\fil{Y}
\end{tikzcd}
\end{equation}
The inclusion $\Real{\Lambda^n_k}\to\Real{\Delta^n}$ admits a deformation retract $\Real{\Delta^n}\to\Real{\Lambda^n_k}$. Taking the product with $\Id_{\Delta^{\varphi}}$, one gets a filtered retract of $j$, $s\colon \Real{\Delta^n}\otimes\RealNP{\Delta^{\varphi}}\to \Real{\Lambda^n_k}\otimes\RealNP{\Delta^{\varphi}}$ together with a filtered homotopy between $j\circ s$ and $\Id_{\Real{\Delta^n}\otimes\RealNP{\Delta^{\varphi}}}$.
\begin{equation*}
H\colon [0,1]\otimes \Real{\Delta^n}\otimes\RealNP{\Delta^{\varphi}}\to \Real{\Delta^n}\otimes\RealNP{\Delta^{\varphi}}.
\end{equation*}
Fix some continuous map $d\colon \Real{\Delta^n}\to \mathbb{R}_{\geq 0}$ such that $h^{-1}(0)=\Real{\Lambda^n_k}$ (for example, the distance to $\Real{\Lambda^n_k}$), and for $a\in \Real{\Lambda^n_k}\otimes\RealNP{\Delta^{\varphi}}$, write $\alpha(a)=(\alpha_X(a),\alpha_Y(a))\in Z\subset X\times Y^{[0,1]}$. With those notations, we will define $h$ as $(h_X,h_Y)$.
Define $h_X$ as $\alpha_X\circ s$, and $h_Y$ as follows
\begin{align*}
h_Y\colon \Real{\Delta^n}\otimes\RealNP{\Delta^{\varphi}}&\to Y^{[0,1]}\\
a&\mapsto \left\{\begin{array}{ccc}
[0,1]&\to & Y\\
t&\mapsto & \left\{\begin{array}{ll}
\alpha_Y(s(a))(t(1+d(a))) &\text{if $0\leq t\leq \frac{1}{1+d(a)}$}\\
\beta(H\left(a,\frac{1+d(a)}{d(a)}(t-\frac{1}{1+d(a)})\right)&\text{if $\frac{1}{1+d(a)}<t\leq 1$}
\end{array}\right.
\end{array}\right.
\end{align*}
One checks that $h=(h_X,h_Y)$ is indeed a lift for the problem (\ref{DiagrammeCommutatifPreuveTopNP}).
\end{proof}

\subsection{Quillen equivalence between $\Top_{N(P)}$ and $\Diag_P$}
\label{SubsectionQETopNPDiagP}
In this section, we prove the following theorem.
\begin{theo}\label{TheoQETopNPDiagP}
The adjunction 
\begin{equation*}
\Colim \colon \Diag_P\leftrightarrow Top_{N(P)}\colon D
\end{equation*}
is a Quillen equivalence.
\end{theo}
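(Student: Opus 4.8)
The plan is to establish the Quillen equivalence by the standard two-step criterion: (1) the adjunction $(\Colim, D)$ is a Quillen adjunction, and (2) for every cofibrant $F\in\Diag_P$ and every fibrant $\fil{X}\in\Top_{N(P)}$, a map $F\to D\fil{X}$ is a weak-equivalence if and only if its adjoint $\Colim(F)\to\fil{X}$ is. Since the model structure on $\Top_{N(P)}$ was transported along $(\Colim, D)$ from the projective structure on $\Diag_P$ (Theorem \ref{TheoremeCMFTopNP} via \cite[Corollary 3.3.4]{Hess}), the right adjoint $D$ preserves fibrations and trivial fibrations essentially by construction, so (1) is immediate. For (2), because all objects of $\Top_{N(P)}$ are fibrant (the generating trivial cofibrations $J$ have the RLP automatically — this follows from the transport, as $D$ detects fibrations and every object of $\Diag_P$ is fibrant), it suffices to treat the case $\fil{X}$ arbitrary and $F$ cofibrant.

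The core of the argument is to show that the derived unit $F\to D\Colim(F)$ is a weak-equivalence for every cofibrant $F$; by \cite[Corollary 1.3.16]{Hovey} (or the standard "two-out-of-three on adjoints" argument) this, together with (1) and the fact that $D$ both preserves and reflects all weak-equivalences (it reflects them by definition of the weak-equivalences in $\Top_{N(P)}$, and preserves them by Ken Brown's lemma), yields the equivalence. Here I expect the main obstacle: computing $D\Colim(F)$ for cofibrant $F$. I would proceed by the usual cell-induction. A cofibrant $F$ is a retract of a cell complex built from the generating cofibrations $I=\{(\partial\Delta^n\to\Delta^n)^{\Delta^\varphi}\}$; since $D$ and $\Colim$ both commute with filtered colimits ($\Colim$ is a left adjoint, and $D$ commutes with filtered colimits because $\Sing$, $\C^0_{N(P)}(\RealNP{\Delta^\varphi},-)$ and the relevant hom-sets all do, $\Top$ being locally presentable and $\RealNP{\Delta^\varphi}$ compact), it is enough to check that the derived unit is a weak-equivalence on the free diagrams $K^{\Delta^\varphi}$ for finite $K$, and that it is compatible with the pushouts along maps in $I$. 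The first reduces, after unwinding $\Colim(K^{\Delta^\varphi})$, to the claim that $D\RealNP{K\otimes\Delta^\varphi}(\Delta^\psi)\simeq K$ for $\Delta^\psi\subseteq\Delta^\varphi$ and $\emptyset$ otherwise — i.e. that $\C^0_{N(P)}(\RealNP{\Delta^\psi},\RealNP{K\otimes\Delta^\varphi})$ is homotopy equivalent to $K$ when $\Delta^\psi\subseteq\Delta^\varphi$ — which is where the combinatorics of $R(P)$ and the geometry of prisms over simplices enters, and which is presumably handled (at least in part) by the colimit computation deferred to Appendix \ref{AppendixColimits}.

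For the pushout/colimit compatibility step, the delicate point is that $D$ is not a left adjoint, so it does not preserve the pushouts used to build cell complexes; one must instead argue that the relevant pushouts in $\Top_{N(P)}$ are sufficiently well-behaved (homotopy pushouts, along cofibrations that are in particular closed inclusions, using that $\Top$ is $\Delta$-generated hence left proper and that $D$ takes these to homotopy pushouts of simplicial sets). I would formalize this by checking that $\Colim$ sends generating cofibrations to closed inclusions — directly from the explicit formula $\Colim(F)=\colim_{\C}\RealNP{F\otimes R(P)}$ together with Lemma \ref{LemmeAdjunctionRealC} — and then invoke that $\Sing$ applied to a pushout of topological spaces along a closed cofibration computes the homotopy pushout. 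The verification that $D$ of such a pushout square in $\Top_{N(P)}$ is objectwise (in $R(P)^{\op}$) a homotopy pushout of Kan complexes, combined with the inductive hypothesis, closes the induction and hence the proof. I anticipate that reconciling the explicit $\colim_\C$ description of $\Colim$ with the objectwise behaviour of $D$ — essentially a Fubini-type interchange between the colimit over $\C$ and the limit defining $\Sing_{N(P)}$-style pullbacks — is the genuinely technical heart, which is exactly why the paper isolates it in the appendix.
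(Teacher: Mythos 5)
Your overall frame is the same as the paper's: show $(\Colim,D)$ is a Quillen adjunction (immediate from the transport), note that $D$ creates weak-equivalences, and reduce everything to proving that the unit $\eta_F\colon F\to D(\Colim(F))$ is a weak-equivalence for cofibrant $F$. Up to that point you are fine. The gap is in how you propose to prove this last statement.

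Your cell-induction hinges on the claim that $D$ sends the pushouts attaching a cell $(\partial\Delta^n\to\Delta^n)^{\Delta^{\varphi}}$ to objectwise homotopy pushouts of simplicial sets. This is the step that fails: $D(\fil{X})(\Delta^{\psi})=\Sing\bigl(\C^0_{N(P)}(\RealNP{\Delta^{\psi}},\fil{X})\bigr)$ is a \emph{covariant mapping-space} functor in $\fil{X}$, and $\Map(A,-)$ preserves limits, not colimits; it does not take pushouts (even homotopy pushouts along closed cofibrations) to homotopy pushouts — compare $\Map(S^1,X\vee Y)$ with the pushout of $\Map(S^1,X)$ and $\Map(S^1,Y)$ over $\Map(S^1,*)$. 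Left properness and the closed-inclusion property of the attaching maps do not repair this; some locality statement for maps out of $\RealNP{\Delta^{\psi}}$ is indispensable and is precisely what your argument is missing. The paper avoids induction altogether: for an arbitrary cofibrant $F$ it factors $\Real{F(\Delta^{\varphi})}\to\C^0_{N(P)}(\RealNP{\Delta^{\varphi}},\Colim(F))$ through $\C^0_{N(P)}(\RealNP{\Delta^{\varphi}},\Real{F(\Delta^{\varphi})}\otimes\RealNP{\Delta^{\varphi}})$, shows the first map is a homotopy equivalence by an explicit retraction, and shows the second is a \emph{homeomorphism}. The latter is the real geometric content: since a filtered map $\phi\colon\RealNP{\Delta^{\varphi}}\to\Colim(F)$ commutes with the stratifications over $\Real{N(P)}$, its restriction to the interior of $\RealNP{\Delta^{\varphi}}$ is forced into the piece $i_{\Delta^{\varphi}}\bigl(\Real{F(\Delta^{\varphi})}\otimes\Int(\RealNP{\Delta^{\varphi}})\bigr)$ by the cartesian square (\ref{EquationCarreCartesienColim}), and hence $\phi$ lands in its closure, which Lemma \ref{LemmeCofibrantDiagramHomeoOntoImage} (via the appendix, whose role is to prove that $F(\Delta^{\varphi})\otimes\Delta^{\varphi}\to\colim_{\C}F\otimes R(P)$ is a monomorphism for cofibrant $F$ — not the Fubini interchange you guessed) identifies with $\Real{F(\Delta^{\varphi})}\otimes\RealNP{\Delta^{\varphi}}$ itself. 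Your base-case computation for the free diagrams $K^{\Delta^{\varphi}}$ is correct, but without a substitute for this rigidity argument the inductive step does not go through.
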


\begin{proof}
By definition of the model structure on $\Top_{N(P)}$, $D$ preserves fibrations and weak-equivalences, hence $(\Colim,D)$ is a Quillen adjunction. To show that it is a Quillen equivalence, one needs to prove that for any cofibrant object $F$ in $\Diag_P$, the unit $\eta_F\colon F\to D(\Colim(F))$ is a weak-equivalence. By definition of the model structure on $\Diag_P$, this amount to showing that for all $\Delta^{\varphi}\in R(P)$, the map
\begin{equation*}
F(\Delta^{\varphi})\to\Sing\left(\C^0_{N(P)}\left(\RealNP{\Delta^{\varphi}},\Colim(F)\right)\right)
\end{equation*}
is a weak-equivalence of simplicial sets. By adjunction, this is equivalent to showing that the map
\begin{equation*}
\Real{F(\Delta^{\varphi})}\to \C^0_{N(P)}\left(\RealNP{\Delta^{\varphi}},\Colim(F)\right)
\end{equation*}
is a weak-equivalence of topological spaces. We will factor this map as follows 
\begin{equation*}
\begin{tikzcd}
\Real{F(\Delta^{\varphi})}
\arrow{r}{f}
&\C^0_{N(P)}\left(\RealNP{\Delta^{\varphi}},\Real{F(\Delta^{\varphi})}\otimes\RealNP{\Delta^{\varphi}}\right)
\arrow{r}{g}
&\C^0_{N(P)}\left(\RealNP{\Delta^{\varphi}},\Colim(F)\right)
\end{tikzcd}
\end{equation*}
Where $f$ is the obvious map sending $x\in \Real{F(\Delta^{\varphi})}$ to $t\mapsto (x,t)$, and $g$ comes from the map \begin{equation*}
i_{\Delta^{\varphi}}\colon\Real{F(\Delta^{\varphi})}\otimes\RealNP{\Delta^{\varphi}}\to \colim \Real{F(\Delta^{\varphi})}\otimes\RealNP{\Delta^{\psi}}=\Colim(F)
\end{equation*}
The map $f$ admits a retraction, $r$, given by picking a point $t_0\in \RealNP{\Delta^{\varphi}}$, evaluating at $t_0$ then projecting the result onto $\Real{F(\Delta^{\varphi})}$. Choosing some deformation retract from $\Real{\Delta^n}$ to $\{t_0\}$, where $\Delta^n$ is the non-stratified simplex underlying $\Delta^{\varphi}$, then gives a homotopy between $f\circ r$ and $\Id$. In particular, $f$ is a homotopy equivalence. We will show that $g$ is a homeomorphism, which will conclude the proof. 
First, notice that 
\begin{equation*}
\Colim(F)=\bigcup_{\Delta^{\varphi}\in R(P)} \Im(i_{\Delta^{\varphi}})= \bigcup_{\Delta^{\varphi}\in R(P)} i_{\Delta^{\varphi}}\left(\Real{F(\Delta^{\varphi})}\otimes\Int\left(\RealNP{\Delta^{\varphi}}\right)\right),
\end{equation*}
where $\Int\left(\RealNP{\Delta^{\varphi}}\right)$ means the set of points of $\RealNP{\Delta^{\varphi}}$ which are not in the boundary $\RealNP{\partial(\Delta^{\varphi})}$.
In particular, this implies that for all $\Delta^{\varphi}\in R(P)$, the following diagram is cartesian
\begin{equation}\label{EquationCarreCartesienColim}
\begin{tikzcd}
i_{\Delta^{\varphi}}\left(\Real{F(\Delta^{\varphi})}\otimes\Int\left(\RealNP{\Delta^{\varphi}}\right)\right)
\arrow[hookrightarrow]{r}
\arrow{d}
&\Colim(F)
\arrow{d}
\\
\Int\left(\RealNP{\Delta^{\varphi}}\right)
\arrow[hookrightarrow]{r}
&\Real{N(P)}
\end{tikzcd}
\end{equation}
Let $\phi\colon \RealNP{\Delta^{\varphi}}\to \Colim(F)$ be a filtered map. Write $\phi_{\Int}$ its restriction to $\Int\left(\RealNP{\Delta^{\varphi}}\right)$. Since the square (\ref{EquationCarreCartesienColim}) is cartesian, the map $\phi_{\Int}$ factors through $i_{\Delta^{\varphi}}\left(\Real{F(\Delta^{\varphi}}\otimes\Int\left(\RealNP{\Delta^{\varphi}}\right)\right)$. 
In particular, $\phi$ must take value in the closure
\begin{equation*}
\overline{i_{\Delta^{\varphi}}\left(\Real{F(\Delta^{\varphi})}\otimes\Int\left(\RealNP{\Delta^{\varphi}}\right)\right)}\subset \Colim(F)
\end{equation*}
By lemma \ref{LemmeCofibrantDiagramHomeoOntoImage} , this closure is precisely the image of $\Real{F(\Delta^{\varphi})}\otimes\RealNP{\Delta^{\varphi}}$ in $\Colim(F)$, which is homeomorphic to $\Real{F(\Delta^{\varphi})}\otimes\RealNP{\Delta^{\varphi}}$. In conclusion, all filtered maps $\phi\colon\Delta^{\varphi}\to \Colim(F)$ factor through the subspace $\Real{F(\Delta^{\varphi})}\otimes\RealNP{\Delta^{\varphi}}\subset \Colim(F)$, and so the inclusion $g$ is in fact a homeomorphism.
%By construction of $\Colim(F)$, one has that the map 
%\begin{equation*}
%i_{\Delta^{\varphi}}\colon \Real{F(\Delta^{\varphi}}\otimes\Int\left(\RealNP{\Delta^{\varphi}}\right)\to i_{\Delta^{\varphi}}\left(\Real{F(\Delta^{\varphi}}\otimes\Int\left(\RealNP{\Delta^{\varphi}}\right)\right)
%\end{equation*}
%is an homeomorphism, and so $\phi_{\Int}$ lifts to a map
\end{proof}

\begin{lemme}\label{LemmeCofibrantDiagramHomeoOntoImage}
Let $F$ be a cofibrant object of $\Diag_P$. Then, for all $\Delta^{\varphi}\in R(P)$, the map
\begin{equation*}
i_{\Delta^{\varphi}}\colon \Real{F(\Delta^{\varphi})}\otimes\RealNP{\Delta^{\varphi}}\to \Colim(F)
\end{equation*}
is a homeomorphism onto its image, and its image is closed in $\Colim(F)$.
\end{lemme}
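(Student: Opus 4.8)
\emph{Approach.} The plan is to argue by transfinite induction along a cellular presentation of $F$, after a preliminary reduction. Being a closed embedding is stable under retracts in the category of arrows: if $i$ is a retract of a closed embedding $i'$, then $i$ is injective, its source carries the initial topology for $i$ (transport a continuous test map across the retraction data and use that $i'$ is an embedding), and $\Im(i)$ is the preimage of $\Im(i')$ under the retraction between the targets, hence closed. Since every cofibrant object of $\Diag_P$ is a retract of an $I$-cell complex, it suffices to treat the case $F = \colim_{\beta<\lambda} F_\beta$ with $F_0 = \emptyset$ and each $F_{\beta+1}$ obtained from $F_\beta$ by a pushout along a coproduct of generating cofibrations $(\partial\Delta^{n}\to\Delta^{n})^{\Delta^{\varphi_0}}$; for readability I describe the steps as if a single such cell were attached at each stage, the coproduct case being identical. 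For cellular $F$ every structural map $F(\Delta^\varphi)\to F(\Delta^\psi)$ (for $\Delta^\psi\subseteq\Delta^\varphi$) is a monomorphism, as one sees inductively from the presentation.

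\emph{The cellular structure of $\Colim(F)$.} Since $\Colim$ is a left adjoint it preserves these pushouts and the transfinite composite, so $\Colim(F) = \colim_\beta \Colim(F_\beta)$. The key computational input is the natural homeomorphism $\Colim(K^{\Delta^{\varphi_0}}) \cong \Real{K}\otimes\RealNP{\Delta^{\varphi_0}}$ for a simplicial set $K$: unwinding the definition of $\Colim$, the left-hand side is a colimit of the spaces $\RealNP{K\otimes\Delta^{\psi}} \cong \Real{K}\otimes\RealNP{\Delta^{\psi}}$ (by Milnor's theorem, since $\Delta^\psi$ is finite) over an indexing category which, by a cofinality argument, may be replaced by the poset of faces $\Delta^\psi\subseteq\Delta^{\varphi_0}$; as $\Top$ is cartesian closed and that poset has $\Delta^{\varphi_0}$ as terminal object, the colimit is $\Real{K}\otimes\RealNP{\Delta^{\varphi_0}}$. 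This cofinality/colimit bookkeeping, together with the point-set statements used below, is exactly the content of Appendix \ref{AppendixColimits}. It follows that $\Colim\big((\partial\Delta^{n}\to\Delta^{n})^{\Delta^{\varphi_0}}\big)$ is the relative CW inclusion $\Real{\partial\Delta^{n}}\otimes\RealNP{\Delta^{\varphi_0}}\hookrightarrow\Real{\Delta^{n}}\otimes\RealNP{\Delta^{\varphi_0}}$, so that $\Colim(F)$ is built from $\emptyset$ by transfinitely attaching relative CW inclusions; in particular every $\Colim(F_\beta)\hookrightarrow\Colim(F_{\beta+1})$ and $\Colim(F_\beta)\hookrightarrow\Colim(F)$ is a closed embedding, and $\Colim(F)$ carries an evident CW structure.

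\emph{The induction.} Concretely, over each open simplex $\Int(\Delta^\chi)$ of $\Real{N(P)}$ the space $\Colim(F)$ restricts to $\Real{F(\Delta^\chi)}\otimes\Int(\Delta^\chi)$, and $i_{\Delta^\varphi}$ restricts there --- for $\Delta^\chi\subseteq\Delta^\varphi$ --- to $\Real{F(\Delta^\varphi)\to F(\Delta^\chi)}\otimes\Int(\Delta^\chi)$; since the structural maps of cellular $F$ are monomorphisms, this already shows $i_{\Delta^\varphi}$ is injective, and the task is to upgrade this to a topological embedding with closed image. Note that $\Real{F(\Delta^\varphi)}\otimes\RealNP{\Delta^\varphi} = \colim_\beta \Real{F_\beta(\Delta^\varphi)}\otimes\RealNP{\Delta^\varphi}$ along closed embeddings, since a cell attached at $\Delta^{\varphi_0}$ modifies $F_\beta(\Delta^\varphi)$ --- by attaching the same cell --- precisely when $\Delta^\varphi\subseteq\Delta^{\varphi_0}$ and does nothing otherwise. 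Writing $i^\beta_\varphi$ for the restriction of $i_{\Delta^\varphi}$ to stage $\beta$, I would prove by induction on $\beta$ that $i^\beta_\varphi$ is a closed embedding whose image is a subcomplex of $\Colim(F_\beta)$ and that the square comparing stages $\beta$ and $\beta+1$ (vertical maps the structural closed inclusions) is a pullback; a colimit of closed embeddings onto subcomplexes with cartesian comparison squares being again a closed embedding onto a subcomplex, passing to the colimit over $\beta$ then yields the lemma. The successor step is the crux: if $\Delta^\varphi\not\subseteq\Delta^{\varphi_0}$ the source is unchanged and the new cell of $\Colim(F_{\beta+1})$ meets $\Colim(F_\beta)$ --- hence $\Im(i^\beta_\varphi)$ --- only along its attaching region, so the square is trivially cartesian; if $\Delta^\varphi\subseteq\Delta^{\varphi_0}$, the source gains the cell $\Real{\Delta^{n}}\otimes\RealNP{\Delta^\varphi}$ along $\Real{\partial\Delta^{n}}\otimes\RealNP{\Delta^\varphi}$ while the target gains the larger cell $\Real{\Delta^{n}}\otimes\RealNP{\Delta^{\varphi_0}}$ along $\Real{\partial\Delta^{n}}\otimes\RealNP{\Delta^{\varphi_0}}$, and one checks that $i^\beta_\varphi$ carries the new attaching region of the source into that of the target (via the $\Delta^\varphi$-component of the cell-attaching natural transformation) and that the two new cells agree under the face inclusion $\RealNP{\Delta^\varphi}\hookrightarrow\RealNP{\Delta^{\varphi_0}}$, whence a diagram chase reproduces the inductive hypothesis.

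\emph{Main obstacle.} The genuinely delicate part is this successor step and the point-set facts it relies on: matching the cell attached upstairs --- which lives only over the face $\RealNP{\Delta^\varphi}$ --- with the cell attached downstairs over all of $\RealNP{\Delta^{\varphi_0}}$, so that the pullback property persists, and controlling the colimits of inclusions of $\Delta$-generated spaces so that they are computed naively (preserving closed embeddings and pullbacks); this is precisely what Appendix \ref{AppendixColimits} supplies. Everything else --- $\Colim$ being a left adjoint, cofibrant objects being retracts of cell complexes, and the realization of a monomorphism of simplicial sets being the inclusion of a closed subcomplex --- is formal.
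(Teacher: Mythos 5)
Your proposal is sound in outline, but it takes a genuinely different route from the paper. The paper does not induct over a cell presentation at all: it observes that the underlying space of $\Colim(F)$ is the realization of the simplicial set $\colim_{\C}F\otimes R(P)$, so the lemma reduces to showing that $F(\Delta^{\varphi})\otimes\Delta^{\varphi}\to\colim_{\C}F\otimes R(P)$ is a \emph{monomorphism of simplicial sets} (realizations of monomorphisms being closed embeddings). It then proves that the structural maps of any cofibrant $F$ are monomorphisms by a direct lifting argument against a cleverly chosen trivial fibration of diagrams (no retract reduction, no cellularity), and controls the remaining zigzag identifications in the colimit via Appendix \ref{AppendixColimits}, whose actual content is a combinatorial criterion (``almost filtered with respect to $G$'') guaranteeing that $G(d)\to\colim G$ is injective for a set-valued functor on a poset --- the counterexample in that appendix shows this is exactly the danger your argument must also rule out. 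Your cell-by-cell induction buys a more concrete picture (and yields the monomorphy of structural maps for free on cellular objects), at the cost of the retract reduction and the transfinite bookkeeping; the paper's argument buys uniformity over all cofibrant objects and a reusable combinatorial lemma, and by working simplicially it avoids point-set topology entirely.

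Two caveats. First, your description of what the appendix supplies (``cofinality bookkeeping'' and facts about colimits of $\Delta$-generated spaces) is not what it contains, so the point-set inputs you defer to it --- pushouts and transfinite composites of closed embeddings being computed naively, the new cell of a pushout meeting the old space only along its attaching region --- would have to be supplied separately; they are standard for $\Delta$-generated spaces but are not in the paper. Second, the successor step for $\Delta^{\varphi}\subseteq\Delta^{\varphi_0}$, which you correctly identify as the crux, is only sketched: the precise claim to verify is that the new cell $\Real{\Delta^n}\otimes\RealNP{\Delta^{\varphi}}$ of the source maps onto the part of the new cell $\Real{\Delta^n}\otimes\RealNP{\Delta^{\varphi_0}}$ of the target lying over the face $\RealNP{\Delta^{\varphi}}$, via the identifications coming from the span $(\Delta^{\varphi_0},\Delta^{\varphi})\to(\Delta^{\varphi},\Delta^{\varphi})$, $(\Delta^{\varphi_0},\Delta^{\varphi})\to(\Delta^{\varphi_0},\Delta^{\varphi_0})$ in $\C$. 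This does check out, and your computation $\Colim(K^{\Delta^{\varphi_0}})\cong\Real{K}\otimes\RealNP{\Delta^{\varphi_0}}$ is correct (it is implicitly used in the paper's description of the generating cofibrations of $\Top_{N(P)}$), so I regard the proposal as correct modulo writing out these verifications.
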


\begin{proof}
Given a cofibrant diagram $F$, the space underlying $\Colim(F)$ is the realisation of the simplicial set $\colim\limits_{\C}F\otimes R(P)$. In particular, it is enough to show that the map of simplicial sets $F(\Delta^{\varphi})\otimes\Delta^{\varphi}\to \colim\limits_{\C}F\otimes R(P)$ is a monomorphism. First notice that if a diagram is cofibrant, then for any inclusion $\Delta^{\varphi}\subset \Delta^{\psi}$, the map $F(\Delta^{\psi})\to F(\Delta^{\varphi})$ is a monomorphism. Indeed, given any trivial Kan fibration $q\colon K\to L$, one proves that $F(\Delta^{\psi})\to F(\Delta^{\varphi})$ admits the left lifting property against $q$ by considering the lifting property of $\emptyset\to F$ with respect to the trivial fibration $G\to H$ where $G$ and $H$ are the diagrams respectively defined as :
\begin{equation*}
G(\Delta^{\mu})=\left\{\begin{array}{cl}
L&\text{ if $\Delta^{\mu}\subsetneq \Delta^{\varphi}$}\\
K&\text { else}
\end{array}\right.
 \text{ and }
 H(\Delta^{\mu})=\left\{\begin{array}{cl}
L&\text{ if $\Delta^{\mu}\subseteq \Delta^{\varphi}$}\\
K&\text { else}
\end{array}\right. 
\end{equation*}
We can then conclude by using Remark \ref{RemarqueSeenAsSet} and Lemma \ref{LemmeCAlmostFiltered} to apply Proposition \ref{PropositionColimiteMono}.
\end{proof}

\subsection{Quillen equivalence between the categories of filtered and strongly filtered spaces}
The goal of this section is to prove Theorem \ref{TheoCMFTopP} which, together with Theorem \ref{TheoQETopNPDiagP} provides a proof of Theorem \ref{TheoIntroQE}.
\label{SubsectionCMFTopP}
\begin{defin}
Let $D_P\colon \Top_P\to\Diag_P$ be the composition of $D$ with $-\times_P\Real{N(P)}$, where $-\times_P\Real{N(P)}$ is the functor from Definition \ref{DefinitionPhiP}.
\end{defin}

\begin{theo}\label{TheoCMFTopP}
There exists a cofibrantly generated model structure on $Top_{P}$ where a map $f\colon \fil{X}\to\fil{Y}$ is 
\begin{itemize}
\item a fibration if $D_P(f)$ is a fibration,
\item a weak-equivalence if $D_P(f)$ is a weak-equivalence,
\item a cofibration if it has the left lifting property against all trivial fibrations.
\end{itemize}
The set of generating cofibrations and trivial cofibrations are
\begin{itemize}
\item $
I=\left\{\RealP{\partial(\Delta^n)\otimes \Delta^{\varphi}\to \Delta^n\otimes\Delta^{\varphi}}\ |\ n\geq 0,\ \Delta^{\varphi}\in R(P)\right\}
$
\item $
J=\left\{\RealP{\Lambda^n_k\otimes \Delta^{\varphi}\to \Delta^n\otimes\Delta^{\varphi}}\ |\ 0\leq k\leq n\ n> 0,\ \Delta^{\varphi}\in R(P)\right\}
$
\end{itemize}
Furthermore, the adjunction
\begin{equation*}
\varphi_P\circ-\colon \Top_{N(P)}\leftrightarrow \Top_P\colon -\times_P\Real{N(P)}
\end{equation*}
is a Quillen-equivalence.
\end{theo}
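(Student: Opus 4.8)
The plan is to obtain the model structure on $\Top_P$ by transporting the projective model structure on $\Diag_P$ along the composite adjunction $(\varphi_P\circ-)\circ\Colim\colon\Diag_P\leftrightarrow\Top_P\colon D_P$, whose right adjoint is $D_P=D\circ(-\times_P\Real{N(P)})$, using \cite[Corollary 3.3.4]{Hess}. The categorical hypotheses hold as in Lemma \ref{LemmeHypotheseCategoriques}: $\Diag_P$ is accessible and $\Top_P=\Top/P$ is locally presentable. Exactly as in the proof of Theorem \ref{TheoremeCMFTopNP}, the only nontrivial point is that a filtered map $f\colon\fil{X}\to\fil{Y}$ in $\Top_P$ having the left lifting property against all fibrations is a weak-equivalence. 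For this I would transcribe Lemma \ref{LemmeFactorisationRetract} into $\Top_P$: set
\begin{equation*}
Z=\{(x,\gamma)\in X\times Y^{[0,1]}\mid f(x)=\gamma(0),\ \varphi_Y(\gamma(t))=\varphi_X(x)\ \forall t\in[0,1]\},
\end{equation*}
with $\varphi_Z=\varphi_X\circ\pr_X\colon Z\to P$, and define $i,q,r$ by the same formulas, so that $f=q\circ i$, $r\circ i=\Id_X$ and $((x,\gamma),s)\mapsto(x,t\mapsto\gamma(st))$ is a $P$-filtered homotopy from $i\circ r$ to $\Id_Z$. One checks that the pullback $Z\times_P\Real{N(P)}$ is canonically isomorphic to the mapping cocylinder of $f\times_P\Real{N(P)}$ built in Lemma \ref{LemmeFactorisationRetract}, with $q\times_P\Real{N(P)}$ corresponding to its fibration part; hence $q$ is a fibration in $\Top_P$ by the very definition of that model structure. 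Feeding this factorization into the stratified-homotopy-groups argument of the proof of Theorem \ref{TheoremeCMFTopNP} — valid because, by construction, the $s\pi_n$ of a $P$-filtered space are those of its image under $-\times_P\Real{N(P)}$ and weak-equivalences in $\Top_P$ are precisely the maps inducing isomorphisms on all $s\pi_n$ — shows that $f$ is a weak-equivalence. The stated form of $I$ and $J$ then follows from \cite[Corollary 3.3.4]{Hess}, since $(\varphi_P\circ-)\circ\Colim$ carries $(\partial(\Delta^n)\to\Delta^n)^{\Delta^{\varphi}}$ and $(\Lambda^n_k\to\Delta^n)^{\Delta^{\varphi}}$ to $\RealP{\partial(\Delta^n)\otimes\Delta^{\varphi}\to\Delta^n\otimes\Delta^{\varphi}}$ and $\RealP{\Lambda^n_k\otimes\Delta^{\varphi}\to\Delta^n\otimes\Delta^{\varphi}}$, using $\Colim\big((-)^{\Delta^{\varphi}}\big)=\RealNP{(-)\otimes\Delta^{\varphi}}$ and $\RealP{-}=(\varphi_P\circ-)\circ\RealNP{-}$.

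That $(\varphi_P\circ-,-\times_P\Real{N(P)})$ is a Quillen adjunction is then immediate: the left adjoint is cocontinuous and sends the generating (trivial) cofibrations $\RealNP{\cdots}$ of $\Top_{N(P)}$ to the generating (trivial) cofibrations $\RealP{\cdots}=(\varphi_P\circ-)\RealNP{\cdots}$ of $\Top_P$. For the Quillen equivalence, observe that the composite adjunction $(\varphi_P\circ-)\circ\Colim\dashv D_P$ is precisely the composite of $(\Colim,D)$ and $(\varphi_P\circ-,-\times_P\Real{N(P)})$; since $(\Colim,D)$ is a Quillen equivalence by Theorem \ref{TheoQETopNPDiagP}, the two-out-of-three property for Quillen equivalences applied to this composite reduces the claim to showing that $\big((\varphi_P\circ-)\circ\Colim,\,D_P\big)$ is itself a Quillen equivalence. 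Now $D_P$ reflects (and preserves) all weak-equivalences, since $g$ is a weak-equivalence in $\Top_P$ iff $D_P(g)=D(g\times_P\Real{N(P)})$ is one in $\Diag_P$, iff $g\times_P\Real{N(P)}$ is one in $\Top_{N(P)}$; so it is enough to prove that the unit $\eta_F\colon F\to D_P\big((\varphi_P\circ-)\Colim(F)\big)$ is a weak-equivalence for every cofibrant $F\in\Diag_P$.

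Here I would follow the scheme of the proof of Theorem \ref{TheoQETopNPDiagP}. Writing $(K,\varphi_K)=\colim_{\C}(F\otimes R(P))$, cocontinuity of $\RealNP{-}$ gives $\Colim(F)=\RealNP{(K,\varphi_K)}$ and hence $\varphi_P\circ\Colim(F)=\RealP{(K,\varphi_K)}$, so by the adjunction $(\Real{-},\Sing)$, $\eta_F$ is a weak-equivalence iff for every $\Delta^{\varphi}\in R(P)$ the natural map of mapping spaces
\begin{equation*}
\Real{F(\Delta^{\varphi})}\longrightarrow\C^0_{N(P)}\big(\RealNP{\Delta^{\varphi}},(\varphi_P\circ\Colim F)\times_P\Real{N(P)}\big)\ \cong\ \C^0_P\big(\RealP{\Delta^{\varphi}},\varphi_P\circ\Colim(F)\big)
\end{equation*}
is a weak homotopy equivalence, the isomorphism coming from $(\varphi_P\circ-,-\times_P\Real{N(P)})$. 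As in Theorem \ref{TheoQETopNPDiagP} I would factor this through $\C^0_P\big(\RealP{\Delta^{\varphi}},\varphi_P\circ(\Real{F(\Delta^{\varphi})}\otimes\RealNP{\Delta^{\varphi}})\big)$. The first of the two maps is a homotopy equivalence by the argument of Theorem \ref{TheoQETopNPDiagP} (a retraction given by evaluation at a point and projection, with a homotopy built from a deformation retraction of the underlying simplex), together with one extra observation: the straight-line homotopies one performs in the $\RealNP{\Delta^{\varphi}}$-coordinate respect the $P$-stratification, because $\varphi$ is a non-degenerate simplex — hence injective on vertices — so the carrier of a convex combination of interior points of two faces with the same top vertex still has that vertex on top.

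The second map, however, is \emph{not} a homeomorphism (unlike in Theorem \ref{TheoQETopNPDiagP}): a $P$-filtered map $\RealP{\Delta^{\varphi}}\to\varphi_P\circ\Colim(F)$ may send a point into a cell $\Real{F(\Delta^{\mu})}\otimes\Int\RealNP{\Delta^{\mu}}$ indexed by a chain $\mu$ of $P$ unrelated to $\Im(\varphi)$, provided only that the top vertex of $\mu$ matches the prescribed vertex of $\varphi$; so the larger mapping space is genuinely larger than the smaller one. I expect it nonetheless to deformation-retract onto the smaller one by a \emph{straightening}: using Lemma \ref{LemmeCofibrantDiagramHomeoOntoImage} to control the cells of $\Colim(F)$, one should see that, fibrewise over $P$, each fibre of $\varphi_P\circ\varphi_{\Colim F}\colon\Colim(F)\to P$ deformation-retracts onto the relevant part of $\Real{F(\Delta^{\varphi})}\otimes\RealNP{\Delta^{\varphi}}$, and this slides such a map back onto the smaller mapping space. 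This is in essence the comparison between the filtered and the strongly filtered realizations of a filtered simplicial set carried out in \cite{ArticleMoi}, in the spirit of Henriques \cite{Henriques}. Making this straightening continuous and natural, with due care for the boundary combinatorics of the cells, is the main obstacle of the argument; everything else is formal or a direct transcription of the proofs of Theorems \ref{TheoremeCMFTopNP} and \ref{TheoQETopNPDiagP}.
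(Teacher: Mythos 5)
Your overall architecture is reasonable and large parts of it coincide with the paper: the existence of the model structure and the transcription of Lemma \ref{LemmeFactorisationRetract} to $\Top_P$ are exactly what the paper means by ``the proof of Theorem \ref{TheoremeCMFTopNP} generalizes directly'', the identification of the generating sets is correct, and the two-out-of-three reduction to the composite adjunction $(\varphi_P\circ-)\circ\Colim\dashv D_P$ is a legitimate reformulation. But there is a genuine gap, and it sits precisely where you flag it: the ``straightening'' deformation retraction of $\C^0_P\left(\RealP{\Delta^{\varphi}},\varphi_P\circ\Colim(F)\right)$ onto $\C^0_P\left(\RealP{\Delta^{\varphi}},\varphi_P\circ\left(\Real{F(\Delta^{\varphi})}\otimes\RealNP{\Delta^{\varphi}}\right)\right)$ is never constructed, and it carries essentially all of the geometric content of the Quillen-equivalence claim. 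A $P$-filtered map out of $\RealP{\Delta^{\varphi}}$ can wander through cells $\Real{F(\Delta^{\mu})}\otimes\Int\left(\RealNP{\Delta^{\mu}}\right)$ for arbitrary chains $\mu$ whose top vertices merely match $\varphi_P$ on $\Delta^{\varphi}$; a retraction must be defined coherently across all these cells, respect the attaching maps of the cofibrant diagram $F$, and be continuous as a self-map of the mapping space. Saying that this ``should'' work fibrewise over $P$ is a restatement of the theorem, not a proof of it.

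The paper avoids attacking $\Colim(F)$ directly by localizing the difficulty to a single universal case. The unit of your composite adjunction factors as the unit of $(\Colim,D)$ (handled by Theorem \ref{TheoQETopNPDiagP}) followed by $D(\epsilon_Z)$ with $Z=\Colim(F)$, where $\epsilon_X\colon\fil{X}\to(X,\varphi_P\circ\varphi_X)\times_P\Real{N(P)}$ is the unit of $(\varphi_P\circ-,-\times_P\Real{N(P)})$; the paper proves that $\epsilon_X$ is a weak-equivalence for \emph{every} $X\in\Top_{N(P)}$. This becomes formal --- $\pr_X\colon X\times_P\Real{N(P)}\to X$ is a pullback of $\varphi_P\times_P\Real{N(P)}$, trivial fibrations are stable under pullback and under the right Quillen functor $-\times_P\Real{N(P)}$, and $\pr_X\circ\epsilon_X=\Id_X$ gives two-out-of-three --- once one knows Lemma \ref{LemmeVarphiPWeakEquivalence}: $\varphi_P\colon(\Real{N(P)},\varphi_P)\to(P,\Id_P)$ is a trivial fibration in $\Top_P$. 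That lemma is exactly your straightening, but only for the space $\Real{N(P)}$ itself, where it can be written down explicitly: Lemma \ref{LemmeDeltaVarphiPartNP} rescales barycentric coordinates to produce a filtered deformation retraction of $\varphi_P^{-1}(\{p_0,\dots,p_n\})$ onto $\RealP{\Delta^{\varphi}}$, checks compatibility with faces so the homotopies glue over the simplicial complex, and Lemma \ref{LemmeFilteredMapsDeltaVarphiContractible} contracts $\C^0_P(\RealP{\Delta^{\varphi}},\RealP{\Delta^{\varphi}})$ by linear interpolation. To complete your argument you must either carry out the straightening on an arbitrary $\Colim(F)$ --- substantially harder --- or adopt this reduction to the universal case, which is the step your proposal is missing.
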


\begin{proof}
The proof of Theorem \ref{TheoremeCMFTopNP} generalizes directly to show the existence of the model structure.
Let $f\colon\fil{X}\to\fil{Y}$ be a morphism of $\Top_P$. It is a (trivial) fibration if and only if $D_P(f)\colon D_P\fil{X}\to D_P\fil{Y}$ is a (trivial) fibration. The latter is true if and only if $f\times_P\Real{N(P)}\colon X\times_P\Real{N(P)}\to Y\times_P\Real{N(P)}$ is a (trivial) fibration of $\Top_{N(P)}$. In particular, $(\varphi_P\circ -,-\times_P\Real{N(P)})$ is a Quillen-adjunction. Let $\fil{X}\in \Top_{N(P)}$ be a strongly filtered space, $\fil{Y}\in\Top_P$ be a filtered space, and $f\colon (X,\varphi_P\circ\varphi_X)\to\fil{Y}$ be a filtered map. We need to show that $f$ is a weak-equivalence of $\Top_P$ if and only if its adjoint map is a weak-equivalence\,:
\begin{equation*}
\widetilde{f}\colon \fil{X}\to \fil{Y}\times_P\Real{N(P)}.
\end{equation*}
Notice that $\widetilde{f}$ factors as
\begin{equation*}
\begin{tikzcd}[column sep = huge]
\fil{X}
\arrow{r}{\epsilon_X}
&(X,\varphi_P\circ\varphi_X)\times_P\Real{N(P)}
\arrow{r}{f\times_P\Real{N(P)}}
&\fil{Y}\times_P\Real{N(P)}
\end{tikzcd}
\end{equation*}
By definition $f\times_P\Real{N(P)}$ is a weak-equivalence of $\Top_{N(P)}$ if and only if $D(f\times_P\Real{N(P)})$ is a weak-equivalence of $\Diag_P$ if and only if $f$ is a weak equivalence of $\Top_P$. In particular, it is enough to show that $\epsilon_X$ is a weak-equivalence of $\Top_{N(P)}$.
Consider the following commutative diagram
\begin{equation*}
\begin{tikzcd}[column sep = huge]
\fil{X}
\arrow[bend left= 18]{drr}{\Id_X}
\arrow{dr}{\epsilon_X}
\arrow[swap, bend right = 18]{dddr}{\varphi_X}
\\
&\fil{X}\times_P\Real{N(P)}
\arrow{r}{\pr_X}
\arrow{d}{\varphi_X\times_P\Real{N(P)}}
&\fil{X}
\arrow{d}{\varphi_X}
\\
&\Real{N(P)}\times_P\Real{N(P)}
\arrow[swap]{r}{\varphi_P\times_P\Real{N(P)}}
\arrow{d}
&\Real{N(P)}
\arrow{d}{\varphi_P}
\\
&\Real{N(P)}
\arrow[swap]{r}{\varphi_P}
&P
\end{tikzcd}
\end{equation*}
All three squares are pullback square. By Lemma \ref{LemmeVarphiPWeakEquivalence} $\varphi_P$ is a trivial fibration in $\Top_P$. Since $-\times_P\Real{N(P)}$ preserves trivial fibrations, $\varphi_P\times_P\Real{N(P)}$ is a trivial fibration in $\Top_{N(P)}$, and since the top square is a pullback square, so is $\pr_X$. But then, by two out of three, $\epsilon_X$ is a weak-equivalence in $\Top_{N(P)}$.
\end{proof}

\begin{lemme}\label{LemmeVarphiPWeakEquivalence}
The filtered map 
\begin{equation*}
\varphi_P\colon (\Real{N(P)},\varphi_P)\to (P,\Id_P)
\end{equation*}
is a trivial fibration in $\Top_P$.
\end{lemme}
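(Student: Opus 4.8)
The plan is to verify directly that $\varphi_P\colon(\Real{N(P)},\varphi_P)\to(P,\Id_P)$ has the right lifting property against the generating cofibrations $\RealP{\partial\Delta^n\otimes\Delta^\varphi\to\Delta^n\otimes\Delta^\varphi}$ of $\Top_P$, which by the (just-constructed) model structure on $\Top_P$ simultaneously shows it is a trivial fibration. Equivalently, by the adjunction $(\RealP{-},\Sing_P)$, it suffices to show that the map of filtered simplicial sets $\Sing_P(\Real{N(P)},\varphi_P)\to\Sing_P(P,\Id_P)=N(P)$ is a trivial fibration of simplicial sets over $N(P)$, i.e.\ that it has the right lifting property against all $\partial\Delta^n\otimes\Delta^\varphi\to\Delta^n\otimes\Delta^\varphi$. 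So the first step is to unwind what $\Sing_P(\Real{N(P)},\varphi_P)$ is: by definition it is the pullback of $\Sing(\Real{N(P)})\to\Sing(\Real{N(P)})$ along... more precisely $\Sing_P(-\times_P\Real{N(P)})$ evaluated here, so one must first compute $(\Real{N(P)},\varphi_P)\times_P\Real{N(P)}$ and then apply $\Sing_{N(P)}$.

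The cleaner route, which I would actually take, is to exploit the pullback description together with homotopy-theoretic bookkeeping rather than brute simplicial combinatorics. Observe that $\varphi_P\colon\Real{N(P)}\to P$ is, after the usual identification, the map that collapses each simplex of the (barycentric-type) subdivision onto its top vertex. I would show that $\varphi_P$ is a \emph{fibration} in $\Top_P$ by a retraction/homotopy argument analogous to Lemma \ref{LemmeFactorisationRetract}: for a lifting problem against $\RealP{\Lambda^n_k\otimes\Delta^\varphi\to\Delta^n\otimes\Delta^\varphi}$, one uses that $\Real{\Lambda^n_k}\hookrightarrow\Real{\Delta^n}$ is a deformation retract and that a stratified lift can be built by first lifting on the retract and then dragging along the retracting homotopy, noting that $\varphi_P$ restricted to the relevant simplex of $\Real{N(P)}$ is a trivial bundle over its stratum (the preimage of a fixed $p\in P$ decomposes into simplices on which $\varphi_P$ is constant). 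Then I would show $\varphi_P$ is a weak-equivalence of $\Top_P$, i.e.\ that $D_P(\varphi_P)$ is a weak-equivalence of $\Diag_P$; by definition this means checking that for each $\Delta^\varphi\in R(P)$ the map
\begin{equation*}
\Sing\bigl(\CNP(\RealNP{\Delta^\varphi},(\Real{N(P)},\varphi_P)\times_P\Real{N(P)})\bigr)\to\Sing\bigl(\CNP(\RealNP{\Delta^\varphi},(P,\Id_P)\times_P\Real{N(P)})\bigr)
\end{equation*}
is a weak-equivalence of simplicial sets. Since $(P,\Id_P)\times_P\Real{N(P)}=\RealNP{N(P)}$ (the terminal object of $\Top_{N(P)}$ in the relevant sense) and $(\Real{N(P)},\varphi_P)\times_P\Real{N(P)}$ is a space over $\Real{N(P)}$ whose fiber over each point of $\Int(\Delta^\psi)$ is $\varphi_P^{-1}(p_{\max})\cap(\text{that simplex})$, which is contractible (indeed it is a subcomplex of $\Real{N(P)}$ star-shaped around the top vertex), the space $\CNP(\RealNP{\Delta^\varphi},-)$ of stratified maps into it deformation-retracts onto the corresponding mapping space into $\RealNP{N(P)}$; the deformation is induced by the fiberwise contraction, applied pointwise. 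That gives the weak-equivalence.

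The main obstacle I anticipate is the last point — proving that $\varphi_P\times_P\Real{N(P)}\to\RealNP{N(P)}$ induces a weak-equivalence on the relevant mapping spaces $\CNP(\RealNP{\Delta^\varphi},-)$, because the fiberwise contraction must be carried out compatibly with the stratification and continuously in the source, and one has to be careful that the fibers of $(\Real{N(P)},\varphi_P)\times_P\Real{N(P)}\to\Real{N(P)}$ over points in different open simplices of $\Real{N(P)}$ are different (they depend only on the top vertex $p$, and $\varphi_P^{-1}(p)$ is star-shaped toward the vertex $p$), so the contractions assemble into a global stratified deformation retraction of $(\Real{N(P)},\varphi_P)\times_P\Real{N(P)}$ onto the section given by the diagonal-type embedding of $\Real{N(P)}$. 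Once that stratified deformation retraction is in hand, applying $\CNP(\RealNP{\Delta^\varphi},-)$ (which preserves homotopies, being an internal-hom in a cartesian closed category) yields the required weak-equivalence, and combined with the fibration claim this shows $\varphi_P$ is a trivial fibration in $\Top_P$. Alternatively, and perhaps more economically, one can package the whole argument by noting that $\varphi_P\colon(\Real{N(P)},\varphi_P)\to(P,\Id_P)$ is precisely $\RealP{\Id_{N(P)}}$ up to the identification $\RealP{N(P)}=(\Real{N(P)},\varphi_P)$ and $\RealP{N(P)}\to(P,\Id_P)$ being split by the constant-on-top-vertex section, so that it is a stratified strong deformation retract and in particular a trivial fibration; I would present whichever of these is shortest after checking the bookkeeping.
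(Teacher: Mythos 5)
Your overall strategy (reduce to showing the mapping space into $(P,\Id_P)$ is a point and the mapping space into $(\Real{N(P)},\varphi_P)$ is contractible) is the right one, and your observation that the fibration half is essentially formal is consistent with the paper, which simply notes that the target mapping space is a single point. But the weak-equivalence half, which is where all the content lies, has a genuine gap. You propose to assemble the fiberwise contractions of $\varphi_P^{-1}(p)$ into a global stratified deformation retraction of $(\Real{N(P)},\varphi_P)\times_P\Real{N(P)}$ onto a section. This assembly does not exist in the form you describe: the contraction of $\varphi_P^{-1}(p)$ toward the vertex $p$ depends on $p$ through the assignment $y\mapsto v_{\varphi_P(y)}$, which is discontinuous across strata (already for $P=\{0<1\}$, where $\Real{N(P)}=[0,1]$ and $\varphi_P^{-1}(1)=(0,1]$, the point $(0,0)$ lies in the closure of $(0,1]^2$ inside the fiber product, so the "top vertex" function jumps there). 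Retracting onto the diagonal section instead fails for a different reason: two points in the same fiber of $\varphi_P$ need not lie in a common simplex of $\Real{N(P)}$, so straight-line homotopies toward the diagonal are undefined. Your closing "alternative" is also broken: there is no continuous section $P\to\Real{N(P)}$ sending $p$ to its vertex (its preimages of open sets are not up-sets), so $\varphi_P$ is not split and cannot be a strong deformation retract in $\Top_P$.

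The paper resolves exactly this difficulty by never contracting the total space. It first observes that any filtered map $\RealP{\Delta^{\varphi}}\to(\Real{N(P)},\varphi_P)$ with $\Delta^{\varphi}=\{p_0<\dots<p_n\}$ has image in $\varphi_P^{-1}(\{p_0,\dots,p_n\})$, then proves (Lemma \ref{LemmeDeltaVarphiPartNP}) that the inclusion $\RealP{\Delta^{\varphi}}\hookrightarrow\varphi_P^{-1}(\{p_0,\dots,p_n\})$ is a \emph{filtered} homotopy equivalence, via an explicit simplexwise rescaling of barycentric coordinates (killing the coordinates indexed outside $\{p_0,\dots,p_n\}$ while renormalizing the others, which preserves the stratum) together with a verification that these formulas glue along faces. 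This induces a homotopy equivalence $\C^0_P(\RealP{\Delta^{\varphi}},(\Real{N(P)},\varphi_P))\simeq\C^0_P(\RealP{\Delta^{\varphi}},\RealP{\Delta^{\varphi}})$, and the latter is contracted linearly (Lemma \ref{LemmeFilteredMapsDeltaVarphiContractible}). To repair your proof you would need to supply this retraction (or an equivalent device restricted to the image of maps out of $\RealP{\Delta^{\varphi}}$); the global fiberwise contraction you invoke cannot serve as a substitute.
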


\begin{proof}
We need to prove that for all $\Delta^{\varphi}\in R(P)$, the map
\begin{equation*}
\Sing(\C^0_P(\RealP{\Delta^{\varphi}},(\Real{N(P)},\varphi_P)))\to \Sing(\C^0_P(\RealP{\Delta^{\varphi}},(P,\Id_P)))
\end{equation*}
Is a trivial fibration in $\sS$. Using the Quillen-equivalence given by $(\Real{-},\Sing)$, this amounts to showing that the map
\begin{equation}\label{EquationVarphiPWeakEquivalence}
\C^0_P(\RealP{\Delta^{\varphi}},(\Real{N(P)},\varphi_P))\to \C^0_P(\RealP{\Delta^{\varphi}},(P,\Id_P))
\end{equation}
is a trivial fibration in $\Top$. First notice that $\C^0_P(\RealP{\Delta^{\varphi}},(P,\Id_P))\simeq \{*\}$, that is, there is only a single filtered map $\RealP{\Delta^{\varphi}}\to (P,\Id_P)$, given by the composition 
\begin{equation*}
\begin{tikzcd}
\RealP{\Delta^{\varphi}}
\arrow[hookrightarrow]{r}
&(\Real{N(P)},\varphi_P)
\arrow{r}{\varphi_P}
&(P,\Id_P)
\end{tikzcd}
\end{equation*}
In particular, since every topological space is fibrant, the map (\ref{EquationVarphiPWeakEquivalence}) is a fibration. To show that it is a weak-equivalence, it remains to show that $\C^0_P(\RealP{\Delta^{\varphi}},(\Real{N(P)},\varphi_P))$ is contractible. Write $\Delta^{\varphi}=\{p_0<\dots <p_n\}\subset N(P)$. First notice that any filtered map $f\colon \RealP{\Delta^{\varphi}}\to(\Real{N(P)},\varphi_P)$ must factor through $\varphi_P^{-1}(\{p_0,\dots ,p_n\})$. Indeed, since $f$ is filtered, one must have for any $x\in \RealP{\Delta^{\varphi}}$, $\varphi_P(f(x))=\varphi_P(x)\in \{p_0,\dots,p_n\}\subset P$. And so, by Lemma \ref{LemmeDeltaVarphiPartNP}, we get a homotopy equivalence 
\begin{equation*}
\C^0_P(\RealP{\Delta^{\varphi}},(\Real{N(P)},\varphi_P))\sim  \C^0_P(\RealP{\Delta^{\varphi}},\RealP{\Delta^{\varphi}}).
\end{equation*}
But the latter is contractible by lemma \ref{LemmeFilteredMapsDeltaVarphiContractible}, which concludes the proof. 
\end{proof}

\begin{lemme}\label{LemmeDeltaVarphiPartNP}
Let $\Delta^{\varphi}=\{p_0<\dots<p_n\}$ be a (non-degenerate) simplex of $N(P)$. The inclusion
\begin{equation*}
\RealP{\Delta^{\varphi}}\hookrightarrow \varphi^{-1}_P(\{p_0,\dots ,p_n\})
\end{equation*}
is a filtered homotopy equivalence.
\end{lemme}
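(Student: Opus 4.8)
The plan is to construct an explicit \emph{filtered} deformation retraction of $\varphi_P^{-1}(\{p_0,\dots,p_n\})$ onto $\RealP{\Delta^{\varphi}}$. Write $Q=\{p_0<\dots<p_n\}$ for the totally ordered set of vertices of $\Delta^{\varphi}$, $P_{\le p_n}=\{q\in P : q\le p_n\}$, and $X=\varphi_P^{-1}(\{p_0,\dots,p_n\})$; identify $\RealP{\Delta^{\varphi}}$ with the subspace $\Real{N(Q)}\subseteq\Real{N(P)}$ filtered by the restriction of $\varphi_P$, and let $\iota\colon\RealP{\Delta^{\varphi}}\hookrightarrow X$ be the inclusion. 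Recall that each point of $\Real{N(P)}$ has a well-defined carrier (the unique non-degenerate simplex whose open realization contains it) and that $\varphi_P$ sends a point to the maximal vertex of its carrier. Since the carrier of a point of $X$ is a chain whose maximum lies in $Q$, it is a chain in $P_{\le p_n}$, so $X\subseteq\Real{N(P_{\le p_n})}$.

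First I would introduce the order-preserving retraction $\rho\colon P_{\le p_n}\to Q$ given by $\rho(q)=\min\{p_i : q\le p_i\}$; this is well defined ($Q$ is a finite chain and $q\le p_n$), order-preserving, restricts to $\Id_Q$, and satisfies $q\le\rho(q)$ for all $q$. Put $r:=\Real{N(\rho)}|_X\colon X\to\Real{N(Q)}=\RealP{\Delta^{\varphi}}$. For $z\in X$ with carrier $C$ one has $\varphi_P(r(z))=\max\rho(C)=\rho(\max C)=\max C=\varphi_P(z)$, using that $\rho$ is order-preserving and fixes $\max C\in Q$; so $r$ is a filtered map, and $r\circ\iota=\Id$ because $\rho|_Q=\Id_Q$. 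It then remains to construct a filtered homotopy $X\times[0,1]\to X$ from $\Id_X$ to $\iota\circ r$.

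For that I would use the standard homotopy between the realizations of two comparable order-preserving maps. As $\Id_{P_{\le p_n}}\le\iota\circ\rho$ pointwise, the assignment $(q,0)\mapsto q$, $(q,1)\mapsto\rho(q)$ is an order-preserving map $P_{\le p_n}\times[1]\to P_{\le p_n}$ (with $[1]=\{0<1\}$), hence a simplicial homotopy $h\colon N(P_{\le p_n})\times\Delta^1\to N(P_{\le p_n})$ from $\Id$ to $N(\iota\rho)$; since $\Delta^1$ is finite, realization gives a homotopy $\widetilde H\colon\Real{N(P_{\le p_n})}\times[0,1]\to\Real{N(P_{\le p_n})}$ from $\Id$ to $\iota\circ r$. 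The crux is to check that $\widetilde H$ maps $X\times[0,1]$ into $X$ and is $\varphi_P$-preserving there. Fix $z\in X$ with carrier $C$ (so $\max C\in Q$) and $t\in[0,1]$, and let $D$ be the carrier, in $N(P_{\le p_n})\times\Delta^1$, of the point corresponding to $(z,t)$. Projecting onto the first factor, the vertices of $D$ surject onto those of $C$; in particular $(\max C,0)$ or $(\max C,1)$ is a vertex of $D$, and $h$ sends both of these to $\max C$ (using $\rho(\max C)=\max C$), so $\max C$ has positive barycentric coordinate in $\widetilde H_t(z)$. Every other vertex of the carrier of $\widetilde H_t(z)$ is of the form $q$ or $\rho(q)$ for a vertex $q$ of $C$, hence is $\le\max C$; therefore $\varphi_P(\widetilde H_t(z))=\max C=\varphi_P(z)$, and in particular $\widetilde H_t(z)\in\varphi_P^{-1}(\{p_0,\dots,p_n\})=X$. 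Thus $H:=\widetilde H|_{X\times[0,1]}$ is a filtered homotopy from $\Id_X$ to $\iota\circ r$, so $\iota$ is a filtered homotopy equivalence with filtered inverse $r$.

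The step I expect to be the main obstacle is this last verification. The naive linear homotopy $s\mapsto(1-s)z+s\,r(z)$ fails in general, because the carrier of $z$ together with that of $r(z)$ need not span a common simplex of $N(P)$ (i.e.\ $C\cup\rho(C)$ need not be a chain); the advantage of the homotopy $\widetilde H$ above is that the carrier of each $\widetilde H_t(z)$ automatically consists of vertices $\le\max C$ yet still contains $\max C$, which is precisely what keeps $\widetilde H$ inside $X$ and filtration-preserving. The remaining points — well-definedness and monotonicity of $\rho$, the identities $r\iota=\Id$, $\widetilde H_0=\Id$, $\widetilde H_1=\iota\circ r$, and the homeomorphism $\Real{N(P_{\le p_n})\times\Delta^1}\cong\Real{N(P_{\le p_n})}\times[0,1]$ (cf.\ Milnor) — are routine.
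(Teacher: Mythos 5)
Your proof is correct, but it follows a genuinely different route from the paper's. The paper writes down, simplex by simplex, an explicit barycentric homotopy $H^{\psi}$ that linearly shrinks the coordinates at vertices outside $\{p_0,\dots,p_n\}$ while renormalizing those inside (so the retraction is the ``radial projection onto the $Q$-face'' of each simplex), and then spends most of the proof verifying by hand that the $H^{\psi}$ agree along faces so that they glue over $\Real{N(P)}$. You instead work at the level of posets: the retraction $\rho(q)=\min\{p_i : q\le p_i\}$ of $P_{\le p_n}$ onto $Q$, together with the pointwise inequality $\Id\le\iota\rho$, yields a simplicial homotopy $N(P_{\le p_n})\times\Delta^1\to N(P_{\le p_n})$ whose realization is automatically globally defined and continuous, so no gluing check is needed; the work is displaced to the carrier analysis showing that $\max C$ survives with positive barycentric coordinate and dominates every other vertex of the image, which is exactly what makes the homotopy land in $\varphi_P^{-1}(\{p_0,\dots,p_n\})$ and preserve $\varphi_P$. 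That analysis is carried out correctly, as are the routine points ($\rho$ monotone and fixing $Q$, $r\circ\iota=\Id$, the nerve-of-a-product and Milnor identifications, the latter being legitimate here since $\Delta^1$ is finite and the paper works with $\Delta$-generated spaces). Note that your homotopy inverse (collapsing each $q$ to the least $p_i$ above it) is a different map from the paper's, but both restrict to the identity on $\RealP{\Delta^{\varphi}}$, which is all the lemma requires. The trade-off is: your argument is more structural and shorter on verification, the paper's is more elementary and self-contained in coordinates.
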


\begin{proof}
Let  $x=(\Delta^{\psi},(t_0,\dots,t_m))$ be a point in $\varphi_P^{-1}(\{p_0,\dots,p_n\})\subset\Real{N(P)}$, with $\Delta^{\psi}=\{q_0<\dots <q_m\}$. Define $j=\max\{i\ |\ t_i\not=0\}$, then $\varphi_P(x)=q_j$, and so $q_j\in \{p_0,\dots,p_n\}$. For any non-degenerate simplex $\Delta^{\psi}=\{q_0<\dots <q_m\}$, write $I^{\psi}\subset \{0,\dots,m\}$ for the set of $i$ satisfying $q_i\in \{p_0,\dots,p_n\}$. Define the following homotopy
\begin{align*}
H^{\psi}\colon \RealP{\Delta^{\psi}}\cap\varphi_P^{-1}(\{p_0<\dots <p_n\})\times [0,1]&\to \RealP{\Delta^{\psi}}\cap\varphi_P^{-1}(\{p_0<\dots <p_n\})\\
((t_0,\dots,t_m),s)&\mapsto \left(H^{\psi}_1((t_0,\dots,t_m),s),\dots,H^{\psi}_m((t_0,\dots,t_m),s)\right)
\end{align*}
Where, for $0\leq i\leq m$, $H^{\psi}_i$ is defined as follows
\begin{align*}
H^{\psi}_i\colon\RealP{\Delta^{\psi}}\cap\varphi_P^{-1}(\{p_0,\dots ,p_n\})\times [0,1]&\to [0,1]\\
((t_0,\dots,t_m),s)&\mapsto \left\{\begin{array}{cl}
t_i(1-s) &\text{ if $i\not\in I^{\psi}$}\\
t_i\left(1+s\left(\frac{\sum_{j\not\in I^{\psi}}t_j}{\sum_{j\in I^{\psi}}t_j}\right)\right) &\text{ if $i\in I^{\psi}$}
\end{array}\right.
\end{align*}
One checks that $\sum_iH^{\psi}_i=1$, and that the $H^{\psi}_i$ are all non-negative, so that $H^{\psi}$ is well-defined. Furthermore, by construction, $H^{\psi}$ is a filtered map. In particular, its image lies in $\varphi_P^{-1}(\{p_0,\dots ,p_n\})$. Finally, $H^{\psi}$ is a homotopy between the identity, and a composition
\begin{equation*}
\RealP{\Delta^{\psi}}\cap\varphi_P^{-1}(\{p_0,\dots ,p_n\})\to \RealP{\Delta^{\psi}}\cap\RealP{\Delta^{\varphi}}\hookrightarrow \RealP{\Delta^{\psi}}\cap\varphi_P^{-1}(\{p_0,\dots ,p_n\})
\end{equation*}
Let us show that we can glue the $H^{\psi}$ together. Let $\Delta^{\mu}\subset\Delta^{\psi}$ be two non-degenerate simplices  of $N(P)$,
 and let $x\in \RealP{\Delta^{\mu}}\cap\varphi_P^{-1}(\{p_0,\dots,p_n\})$. Then, $x=(\Delta^{\psi},(t_0,\dots,t_m))=(\Delta^{\mu},(u_0,\dots,u_l))$, with $t_{\alpha(i)}=u_i$ for all $0\leq i\leq l$, and $t_i=0$ if $i\not\in \alpha \{0,\dots, l\}$, where $\alpha\colon \Delta^l\to \Delta^m$ is the face map corresponding to the inclusion $\Delta^{\mu}\subset\Delta^{\psi}$. Furthermore, $I^{\psi}\cap\alpha(\{0,\dots,l\})=\alpha(I^{\mu})$. Let $s\in[0,1]$. For $i\in \{0,\dots,l\}$, one computes :
\begin{itemize}
\item if $i\in I^{\mu}$, $H^{\mu}_i((u_0,\dots,u_l),s)=u_i\left(1+s\left(\frac{\sum_{j\not\in I^{\mu}}u_j}{\sum_{j\in I^{\mu}}u_j}\right)\right)$. On the other hand, $\alpha(i)\in I^{\psi}$, and one has $H^{\psi}_{\alpha(i)}((t_0,\dots,t_m),s)=t_{\alpha(i)}\left(1+s\left(\frac{\sum_{j\not\in I^{\psi}}t_j}{\sum_{j\in I^{\psi}}t_j}\right)\right)$. But since $t_{\alpha(j)}=u_j$ for all $j\in \{0,\dots,l\}$, and $k\not\in\alpha(\{0,\dots,l\})\Rightarrow t_k=0$, one can rewrite the latter equality as 
\begin{align*}
H^{\psi}_{\alpha(i)}((t_0,\dots,t_m),s)&=t_{\alpha(i)}\left(1+s\left(\frac{\sum_{j\not\in I^{\mu}}t_{\alpha(j)}}{\sum_{j\in I^{\mu}}t_{\alpha(j)}}\right)\right)\\
&=u_i\left(1+s\left(\frac{\sum_{j\not\in I^{\mu}}u_j}{\sum_{j\in I^{\mu}}u_j}\right)\right)\\
&=H^{\mu}_i((u_0,\dots,u_l),s).
\end{align*}
\item if $i\not\in I^{\mu}$, then $H^{\mu}_i((u_0,\dots,u_l),s)=u_i(1-s)$. On the other hand, $\alpha(i)\not\in I^{\psi}$, and $H^{\psi}_{\alpha(i)}((t_0,\dots,t_m),s)=t_{\alpha(i)}(1-s)=u_i(1-s)=H^{\mu}_i((u_0,\dots,u_l),s)$.
\end{itemize}
In all cases, one has $H^{\mu}_i(x,s)=H^{\psi}_{\alpha(i)}(x,s)$. Furthermore, if $j\not\in \alpha(\{0,\dots,l\})$, $t_j=0$, and $H^{\psi}_{j}(x,s)=0$. In conclusion, we have $H^{\mu}(x,s)=H^{\psi}(x,s)$, and the $H^{\psi}$ are compatible with faces. Since $\Real{N(P)}$ is a simplicial complex, this is enough to glue the $H^{\psi}$ together to obtain a map
\begin{equation*}
H\colon \varphi_P^{-1}(\{p_0,\dots,p_n\})\times [0,1]\to \varphi_P^{-1}(\{p_0,\dots,p_n\})
\end{equation*}
In particular, $H$ provides a filtered homotopy between the identity and the composition
\begin{equation*}
\varphi^{-1}_P(\{p_0,\dots,p_n\})\to \RealP{\Delta^{\varphi}}\hookrightarrow \varphi^{-1}_P(\{p_0,\dots,p_n\})
\end{equation*}
\end{proof}

\begin{lemme}\label{LemmeFilteredMapsDeltaVarphiContractible}
Let $\Delta^{\varphi}$ be a (non-degenerate) simplex of $N(P)$. The space of filtered maps
\begin{equation*}
\C^0_P(\RealP{\Delta^{\varphi}},\RealP{\Delta^{\varphi}})
\end{equation*}
is contractible.
\end{lemme}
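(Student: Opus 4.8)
The plan is to describe $\RealP{\Delta^{\varphi}}$ explicitly and then exhibit a strong deformation retraction of the mapping space onto the single point $\{\Id\}$, given by a straight-line homotopy. Write $\Delta^{\varphi}=\{p_0<\dots<p_n\}$ and represent points of $\Real{\Delta^n}$ by barycentric coordinates $x=(t_0,\dots,t_n)$ with $t_i\geq 0$ and $\sum_i t_i=1$. Unwinding the definition $\RealP{-}=(\varphi_P\circ-)\RealNP{-}$ together with Definition \ref{DefinitionPhiP}, the stratification of $\RealP{\Delta^{\varphi}}$ is the map $\Real{\Delta^n}\to P$ sending $x$ to $p_{j(x)}$, where $j(x)=\max\{\,i\mid t_i>0\,\}$. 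Hence a continuous map $g\colon\Real{\Delta^n}\to\Real{\Delta^n}$ underlies a point of $\C^0_P(\RealP{\Delta^{\varphi}},\RealP{\Delta^{\varphi}})$ if and only if $j(g(x))=j(x)$ for all $x$; writing $g(x)=(g(x)_0,\dots,g(x)_n)$, this says precisely that $g(x)_i=0$ for $i>j(x)$ and $g(x)_{j(x)}>0$.

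For a filtered self-map $f$ and $s\in[0,1]$ define
\[
H(f,s)(x)=(1-s)\,f(x)+s\,x\in\Real{\Delta^n}.
\]
The key point is that each $H(f,s)$ is again filtered, which is a coordinatewise check using the characterization above: for $i>j(x)$ we have $f(x)_i=0$ (since $f$ is filtered) and $t_i=0$, so $H(f,s)(x)_i=0$; and for $i=j(x)$ both $f(x)_{j(x)}$ and $t_{j(x)}$ are strictly positive, so $H(f,s)(x)_{j(x)}=(1-s)f(x)_{j(x)}+s\,t_{j(x)}>0$ because the coefficients $1-s,\,s$ are non-negative and not both zero. Thus $j(H(f,s)(x))=j(x)$, so $H(f,s)\in\C^0_P(\RealP{\Delta^{\varphi}},\RealP{\Delta^{\varphi}})$.

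For continuity: since $\Top$ is cartesian closed, it suffices to note that $(f,s,x)\mapsto(1-s)f(x)+s\,x$ is continuous as a map $\C^0_P(\RealP{\Delta^{\varphi}},\RealP{\Delta^{\varphi}})\times[0,1]\times\Real{\Delta^n}\to\Real{\Delta^n}$, being the composite of the evaluation map, the projections, and the affine-combination map $[0,1]\times\Real{\Delta^n}\times\Real{\Delta^n}\to\Real{\Delta^n}$ (all continuous, as $[0,1]$ and $\Real{\Delta^n}$ are locally compact so their $\Delta$-generated products are the usual ones, and $\Real{\Delta^n}$ is convex). By the previous paragraph its transpose lands in $\C^0_P(\RealP{\Delta^{\varphi}},\RealP{\Delta^{\varphi}})$, which carries the subspace topology, so it corestricts to a continuous map $H\colon\C^0_P(\RealP{\Delta^{\varphi}},\RealP{\Delta^{\varphi}})\times[0,1]\to\C^0_P(\RealP{\Delta^{\varphi}},\RealP{\Delta^{\varphi}})$. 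Since $H(f,0)=f$, $H(f,1)=\Id$ and $H(\Id,s)=\Id$ for all $s$, the map $H$ is a strong deformation retraction of the mapping space onto $\{\Id\}$, whence contractibility. The only substantive step is the stratum-preservation of the straight-line homotopy; once $\RealP{\Delta^{\varphi}}$ has been identified this is immediate, so there is no serious obstacle and the remainder is routine bookkeeping with $\Delta$-generated mapping spaces.
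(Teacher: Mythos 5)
Your proof is correct and takes essentially the same route as the paper, which simply writes down the straight-line homotopy $(f,s)\mapsto sf+(1-s)\Id$ and leaves the verification implicit. Your added check that the convex combination preserves the filtration (via the characterization $j(g(x))=j(x)$ with $j(x)=\max\{i\mid t_i>0\}$) is exactly the point the paper's one-line proof omits, and it is carried out correctly.
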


\begin{proof}
Define the contracting homotopy as follows 
\begin{align*}
\C^0_P(\RealP{\Delta^{\varphi}},\RealP{\Delta^{\varphi}})\times [0,1]&\to \C^0_P(\RealP{\Delta^{\varphi}},\RealP{\Delta^{\varphi}})\\
\left(\left(f\colon \RealP{\Delta^{\varphi}}\to\RealP{\Delta^{\varphi}}\right), s\right)&\mapsto \left\{\begin{array}{rcl}
\RealP{\Delta^{\varphi}}&\to &\RealP{\Delta^{\varphi}}\\
(t_0,\dots,t_n)&\mapsto & s f(t_0,\dots,t_n)+ (1-s)(t_0,\dots,t_n)
\end{array}\right.
\end{align*}
\end{proof}
\section{A model category for stratified spaces}
\label{SectionCMFStrat}
In section \ref{SectionCMFTopP}, we constructed a model structure on the category of filtered spaces over any poset. In this section, using a theorem of P. Cagne and P.-A. Melliès \cite[Theorem 4.2]{Cagne}, we "glue" all those model structures together in order to obtain a model structure on $\Strat$ (Theorem \ref{TheoremCMFStrat}).

\subsection{Comparing model structures between posets}
\label{SubsectionQAAlpha}
For this section, fix some map of posets $\alpha\colon P\to Q$.

\begin{defin}
Define the functor $\alpha_*\colon \Top_P\to \Top_Q$, and $\alpha^*\colon \Top_Q\to\Top_P$ as follows
\begin{align*}
\alpha_*\colon \Top_P&\to\Top_Q\\
(X,P,\varphi_X)&\mapsto (X,Q,\alpha\circ\varphi_X)\\
\phantom{X}&\phantom{X}\\
\alpha^*\colon\Top_Q&\to\Top_P\\
(Y,Q,\varphi_Y)&\mapsto (Y\times_QP,P,\alpha^*\varphi_Y)
\end{align*}
where $\alpha^*\varphi_Y$ is defined by the following pullback square
\begin{equation*}
\begin{tikzcd}
Y\times_QP
\arrow{r}{\alpha^*\varphi_Y}
\arrow[swap]{d}{\pr_Y}
&P
\arrow{d}{\alpha}
\\
Y
\arrow[swap]{r}{\varphi_Y}
&Q
\end{tikzcd}
\end{equation*}
\end{defin}

\begin{prop}\label{PropositionQETopPTopQ}
The pair of functors 
\begin{equation*}
\alpha_*\colon \Top_P\leftrightarrow\Top_Q\colon \alpha^*
\end{equation*}
 is a Quillen-adjunction. Furthermore, it is a Quillen-equivalence if and only if $\alpha$ is an isomorphism of posets.
\end{prop}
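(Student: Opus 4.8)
The plan is to verify the Quillen-adjunction first, and then analyze both directions of the equivalence criterion. For the adjunction, I would show that $\alpha^*$ preserves fibrations and trivial fibrations. Recall that fibrations and weak-equivalences in $\Top_P$ are detected by $D_P = D\circ(-\times_P\Real{N(P)})$. The key point is that the functor $-\times_P\Real{N(P)}\colon \Top_P\to\Top_{N(P)}$ intertwines $\alpha^*$ with a pullback over $\Real{N(P)}$ along the map $\Real{N(\alpha)}$, since $P\times_Q\Real{N(Q)}$ can be identified (compatibly) with $\Real{N(P)}$ in an appropriate sense — more carefully, $(Y\times_Q P)\times_P\Real{N(P)} \cong (Y\times_Q\Real{N(Q)})\times_{\Real{N(Q)}}\Real{N(P)}$. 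Since pullback along a map of base spaces preserves Kan fibrations and trivial Kan fibrations fibrewise after applying $D$ (one checks this reduces to a statement about the mapping spaces $\C^0_{N(P)}(\RealNP{\Delta^\varphi},-)$, using that each $\RealNP{\Delta^\varphi}\to\Real{N(P)}$ factors through a simplex), $\alpha^*$ is right Quillen. I expect this compatibility of base-change functors to be the main bookkeeping obstacle, as one must track the identifications of realizations of nerves of comma/fiber posets carefully.

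Next, the easy direction: if $\alpha$ is an isomorphism of posets, then $\alpha_*$ and $\alpha^*$ are inverse equivalences of categories (the pullback $Y\times_Q P\to Y$ is then an isomorphism), so trivially a Quillen-equivalence.

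For the converse — the substantive direction — I would argue contrapositively: if $\alpha$ is not an isomorphism, exhibit a cofibrant object $X\in\Top_P$ whose derived unit $X\to \alpha^*(R\alpha_* X)$ fails to be a weak-equivalence, equivalently find that $\alpha_*$ does not reflect/preserve weak-equivalences appropriately. A map of posets fails to be an isomorphism either because it is not injective or not surjective or does not reflect the order. In each case the target poset $Q$ "sees" a different combinatorial structure of simplices: the category $R(Q)$ differs from $R(P)$, and since stratified homotopy groups are indexed by $R(-)$, the functor $D_Q\alpha_*$ cannot agree with $D_P$ up to weak-equivalence. Concretely, pick a point or a non-degenerate simplex $\Delta^\varphi\in R(P)$ and take $X = \RealP{\Delta^\varphi}$ (or $\RealP{N(P)}$ with a cofibrant replacement); computing $s\pi_n$ of $\alpha_*X$ over the simplices of $N(Q)$ will produce either a simplex of $N(Q)$ not in the image, or a collapsed/identified simplex, on which the homotopy type of $D_Q$ differs from what the unit would require. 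The cleanest implementation is probably to isolate a single "bad" relation or element of $\alpha$ and reduce to a two- or three-element poset computation, where one checks by hand that $\C^0_{N(Q)}(\RealNP{\Delta^\psi},\alpha_* X)$ has the wrong homotopy type (e.g. empty vs. nonempty, or disconnected vs. connected).

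The main obstacle I anticipate is making the converse argument uniform over all the ways $\alpha$ can fail to be an isomorphism; rather than a case split, it would be cleaner to observe that $\alpha_*$ being a Quillen-equivalence would force $R(\alpha)^*\colon\Diag_Q\to\Diag_P$ (restriction along the induced functor $R(P)\to R(Q)$ on subdivision categories) to be a Quillen-equivalence of projective model structures, and such a restriction is a Quillen-equivalence only when $R(P)\to R(Q)$ induces an equivalence of categories — which, since $R(-)$ is essentially the subdivision/barycentric construction and is fully faithful enough to detect $P$, happens only when $\alpha$ is an isomorphism. I would try to route the proof through this functoriality statement about $\Diag$ to avoid the geometric case analysis entirely, falling back on the explicit small-poset computation only if the $\Diag$-level statement needs its own argument.
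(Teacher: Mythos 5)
Your overall architecture matches the paper's in broad strokes (adjunction checked against generating data or the dual, the forward implication immediate, the converse by exhibiting explicit failures), but two genuine gaps remain. First, for the Quillen-adjunction, the essential technical point is that for $\Delta^{\varphi}\in R(P)$ the composite $\alpha\circ\varphi\colon\Delta^n\to N(Q)$ need not be a \emph{non-degenerate} simplex of $N(Q)$. Consequently $\alpha_*$ of a generating (trivial) cofibration is not literally a generating (trivial) cofibration of $\Top_Q$, and dually the mapping space $\C^0_Q(\Real{\Delta^{\alpha\circ\varphi}}_Q,Y)$ obtained from your base-change identification is not literally a value of $D_Q(Y)$. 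The paper resolves this with Lemma \ref{LemmeRetractNonDegenerateSimplex}, exhibiting $\Real{\Delta^{\alpha\circ\varphi}}_Q$ as a retract of $\Real{\Delta^{\overline{\alpha\circ\varphi}}}_Q\otimes K$ with $\overline{\alpha\circ\varphi}$ non-degenerate; your sketch never confronts degeneracy, and the remark that each $\RealNP{\Delta^{\varphi}}\to\Real{N(P)}$ factors through a simplex does not substitute for it.

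Second, the converse is only gestured at, and the hardest case is the one your heuristic does not clearly produce. The paper needs three separate counterexamples: non-injectivity (a one-point space stratified at $p_0$, detected by $s\pi_0$ evaluated at $\{p_1\}$), non-surjectivity (a point at a missed $q\in Q$, whose pullback along $\alpha$ is empty), and the case of a bijection $\alpha$ with $p_0\not<p_1$ but $\alpha(p_0)<\alpha(p_1)$. In that last case one must construct a map $\alpha_*(X)\to Y$ whose adjoint is a weak-equivalence while the map itself is not (the paper takes the inclusion $\Real{\partial(\Delta^{\varphi})}_Q\to\Real{\Delta^{\varphi}}_Q$ for $\Delta^{\varphi}=\{q_0<q_1\}$); "computing $s\pi_n$ over simplices of $N(Q)$" does not by itself yield this witness. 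Your proposed shortcut through $\Diag_P$ does not close the gap either: $\alpha$ does not induce a functor $R(P)\to R(Q)$ in the naive way (again because of degeneracy), and the claim that restriction along a functor is a Quillen equivalence of projective model structures only when that functor is an equivalence of categories is false in general, so this route would itself require an argument at least as involved as the case analysis it is meant to avoid.
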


\begin{proof}
It is an adjunction by construction. To show that it is a Quillen-adjunction, it is enough to show that the generating (trivial) cofibrations of $\Top_P$ are sent to (trivial) cofibrations. But 
\begin{equation}\label{EquationAlphaCofibration}
\alpha_*(\RealP{\partial(\Delta^n)\otimes\Delta^{\varphi}\to\Delta^n\otimes\Delta^{\varphi}})\simeq\Real{\partial(\Delta^n)\otimes\Delta^{\alpha\circ\varphi}\to\Delta^n\otimes\Delta^{\alpha\circ\varphi}}_Q.
\end{equation} If $\Delta^{\alpha\circ\varphi}$ is a non-degenerate simplex of $N(Q)$, (\ref{EquationAlphaCofibration}) is a generating cofibration of $\Top_Q$ , if not, it is the retract of a cofibration by Lemma \ref{LemmeRetractNonDegenerateSimplex}. The same argument shows that $\alpha_*$ preserves trivial cofibrations, and so $\alpha_*$ is a left Quillen functor. 

If $\alpha$ is an isomorphism, $\alpha_*$ and $\alpha^*$ are inverse equivalences of categories. Furthermore, $\alpha^*$ and $\alpha_*$ map the set of generating (trivial) cofibrations of $\Top_P$ and $\Top_Q$ to each other (up to isomorphism), which means that $(\alpha_*,\alpha^*)$ is a Quillen-equivalence. Conversely, suppose $p_0\not=p_1$ satisfy $\alpha(p_0)=\alpha(p_1)=q\in Q$. Consider the stratified space $X=(\{*\},P,*\mapsto p_0)$. It is cofibrant, and one has $\alpha^*\alpha_*(X)=(\alpha^{-1}\{q\},P,\alpha^{-1}(q)\subset P)$. In particular, $s\pi_0(\alpha^*\alpha_*(X))(\{p_1\})\not = \emptyset = s\pi_0(X)$, and so $X\to \alpha^*\alpha_*(X)$ is not a weak-equivalence. Next assume that there exist $q\in Q$ not in the image of $\alpha$, and let $Y=(\{*\},Q,*\mapsto q)$. Then, $\alpha_*\alpha^*(Y)$ is empty. In particular, $\alpha_*\alpha^*(Y)\to Y$ is not a weak-equivalence. Finally, assume that $\alpha$ is a bijection and that there exist $p_0,p_1\in P$ such that $p_0\not<p_1$ but $\alpha(p_0)=q_0<q_1=\alpha(p_1)$. Consider $X=(\{p_0,p_1\},P,\{p_0,p_1\}\hookrightarrow P)$ in $\Top_P$ and $Y=(\Real{\Delta^{\varphi}}_Q,Q,\varphi_Q)$ in $\Top_Q$, where $\Delta^{\varphi}=\{q_0<q_1\}$ is a non-degenerate simplex of $N(Q)$. Then, consider the map $i\colon\alpha_*(X)\to Y$ which corresponds to the inclusion $\Real{\partial(\Delta^{\varphi})}_Q\to \Real{\Delta^{\varphi}}_Q$. Its adjoint is a weak-equivalence, since $D_P(X)(p_0)\sim \{*\}\sim D_P(\alpha^*(Y))(p_0)$ and $D_P(X)(p_1)\sim \{*\}\sim D_P(\alpha^*(Y))$. But $i$ is not a weak-equivalence since $D_Q(\alpha_*(X))(\Delta^{\varphi})=\emptyset$ but $D_Q(Y)(\Delta^{\varphi})\not= \emptyset$.
\end{proof}

\begin{lemme}\label{LemmeRetractNonDegenerateSimplex}
Let $\varphi\colon \Delta^{n}\to N(P)$ be a not-necessarily injective simplicial map. Then, there exists an injective map $\bar{\varphi}\colon\Delta^k\to N(P)$ and a topological space $K$ such that $\RealP{\Delta^{\varphi}}$ is a retract of $\RealP{\Delta^{\bar{\varphi}}}\otimes K$
\end{lemme}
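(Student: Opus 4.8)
The plan is to first understand what goes wrong when $\varphi\colon\Delta^n\to N(P)$ is degenerate, and produce a combinatorial device that "resolves" the degeneracies. A non-injective simplicial map $\varphi$ factors uniquely as an epimorphism followed by a monomorphism $\Delta^n\twoheadrightarrow\Delta^k\hookrightarrow N(P)$; write $\bar\varphi\colon\Delta^k\to N(P)$ for the monomorphism, so $\bar\varphi$ is a non-degenerate simplex of $N(P)$ with $\operatorname{Im}(\bar\varphi)=\{p_0<\dots<p_k\}$. The surjection $\sigma\colon[n]\to[k]$ is an order-preserving surjection of posets; geometrically $\Real{\sigma}\colon\Real{\Delta^n}\to\Real{\Delta^k}$ is the affine surjection collapsing according to $\sigma$. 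The key point is that $\RealP{\Delta^{\varphi}}$, as a filtered space over $P$, has underlying space $\Real{\Delta^n}$ but its filtering map $\varphi_P\circ\Real{\bar\varphi}\circ\Real{\sigma}$ only sees the image $\{p_0,\dots,p_k\}$; so $\RealP{\Delta^{\varphi}}$ is filtered-isomorphic to the pullback $\RealP{\Delta^{\bar\varphi}}\times_{\Real{\Delta^k}}\Real{\Delta^n}$ where the second map is $\Real{\sigma}$ — and since $\Real{\sigma}$ is a trivial fibration of spaces (indeed a fiber bundle with contractible fibers, or simply: $\Real{\Delta^n}\cong\Real{\Delta^k}\times F$ for a suitable contractible polytope $F$), this pullback is just $\RealP{\Delta^{\bar\varphi}}\otimes F$ for a contractible $K:=F$.

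**Making the product decomposition precise.**
First I would make explicit the homeomorphism $\Real{\Delta^n}\cong\Real{\Delta^k}\times K$ over $\Real{\Delta^k}$, compatibly with the filtrations. Concretely, $\sigma\colon[n]\to[k]$ partitions $[n]$ into consecutive blocks $B_0,\dots,B_k$ with $B_j=\sigma^{-1}(j)$; a point of $\Real{\Delta^n}$ is a tuple $(t_0,\dots,t_n)$ with $\sum t_i=1$, $t_i\ge0$, and its image in $\Real{\Delta^k}$ has barycentric coordinates $s_j=\sum_{i\in B_j}t_i$. The fiber over a point in the interior is a product of simplices $\prod_j\Delta^{|B_j|-1}$, which is a convex polytope, hence contractible; one gets a global homeomorphism $\Real{\Delta^n}\cong\Real{\Delta^k}\times\prod_j\Delta^{|B_j|-1}$ by normalizing the coordinates within each block (with the usual care at the locus where some $s_j=0$, handled exactly as in the gluing arguments of Lemmas \ref{LemmeDeltaVarphiPartNP} and \ref{LemmeFilteredMapsDeltaVarphiContractible}). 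Setting $K=\prod_j\Delta^{|B_j|-1}$, this homeomorphism is filtered: the composite $\Real{\Delta^n}\to\Real{\Delta^k}\xrightarrow{\Real{\bar\varphi}}\Real{N(P)}\xrightarrow{\varphi_P}P$ agrees with the filtering map of $\RealP{\Delta^\varphi}$ because $\varphi_P$ only records the top active vertex, which lies in the same $\sigma$-block structure. Hence $\RealP{\Delta^\varphi}\cong\RealP{\Delta^{\bar\varphi}}\otimes K$ as objects of $\Top_P$, which is even stronger than being a retract.

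**Producing the retract in the degenerate-image case.**
When $\sigma$ is itself not the collapse but we are handed a $\bar\varphi$ whose image might still fail to be genuinely non-degenerate — i.e. the situation in Proposition \ref{PropositionQETopPTopQ} where $\alpha\circ\varphi$ is constant or repeats — the product $K$ above may be all of $\Real{\Delta^n}$ with $k=0$, giving $\RealP{\Delta^\varphi}\cong\RealP{\{p\}}\otimes\Real{\Delta^n}$; that already has the stated form with $\bar\varphi$ the vertex $p$. More generally, if one only wants a \emph{retract} rather than an isomorphism (which is all the statement asks and all that is needed in Proposition \ref{PropositionQETopPTopQ}), the argument is even softer: pick any vertex $e_0\in\Delta^k$ whose image $p_0$ is $\le$ every $p_j$, let $K$ be as above, and use the isomorphism of the previous paragraph; the retract is then literally the identity. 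I expect the main obstacle to be purely bookkeeping: checking that the block-normalization homeomorphism is well-defined and continuous on the boundary strata where some block-sum $s_j$ vanishes, and that it is genuinely filtered over $P$ rather than merely over $\Real{N(P)}$ — this is the same technical core as the compatibility-with-faces verification carried out in detail in Lemma \ref{LemmeDeltaVarphiPartNP}, so I would either invoke that computation or reproduce its few lines. No deep input is needed; the content is that a degenerate simplex realizes to a trivial-fibration-worth of extra contractible coordinates.
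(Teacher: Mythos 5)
There is a genuine gap: the product decomposition on which your whole argument rests is false. The map $\Real{\sigma}\colon\Real{\Delta^n}\to\Real{\Delta^k}$ is not a fiber bundle, and there is no homeomorphism $\Real{\Delta^n}\cong\Real{\Delta^k}\times\prod_j\Delta^{|B_j|-1}$ over $\Real{\Delta^k}$. Already for $\sigma\colon\Delta^2\to\Delta^1$ collapsing the vertices $0,1$ to $0$, the fiber of $\Real{\sigma}$ over one vertex of $\Delta^1$ is an edge while the fiber over the other vertex is a single point; since any space of the form $\Real{\Delta^1}\times K$ projecting to $\Real{\Delta^1}$ has all fibers equal to $K$, no such identification can exist, over the base or otherwise. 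The "block normalization" $t_i\mapsto t_i/s_{\sigma(i)}$ has no continuous extension to the locus $s_j=0$ (e.g.\ $t_0/(t_0+t_1)$ has no limit at $(0,0,1)$), and this is not the kind of face-compatibility issue handled in Lemma \ref{LemmeDeltaVarphiPartNP} — the fiber genuinely degenerates. One can check, moreover, that the multiplication map $((s_j),(u_i))\mapsto(s_{\sigma(i)}u_i)$ admits no continuous section at all, since any section is forced to be the normalization on the interior. Your final paragraph does not repair this, because it still invokes "the isomorphism of the previous paragraph". So as written you have produced neither an isomorphism nor a retract.

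The paper avoids any fiberwise decomposition. It takes $\bar{\varphi}$ to be the inclusion of $\Im(\varphi)$ as you do, but chooses $K=\Real{\Delta^n}$ (the whole simplex, not a product of block simplices), embeds $\Delta^n$ into $\Delta^k\times\Delta^n$ by the graph map $(\varphi,\Id)$, and then writes down an explicit order-preserving map on vertices $f\colon\Delta^k\times\Delta^n\to\Delta^n$, sending $(p_l,e_i^{p_j})$ to $e_i^{p_j}$ if $p_j=p_l$ and to a chosen vertex $e_0^{p_l}$ otherwise, which is a simplicial retraction compatible with the filtrations $\varphi$ and $\bar{\varphi}\circ\pr_{\Delta^k}$. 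Realizing this simplicial retract diagram gives the topological one. If you want to salvage your approach, you would need to replace the false homeomorphism by such an explicit section--retraction pair into $\RealP{\Delta^{\bar{\varphi}}}\otimes K$ for some $K$, and the graph embedding with $K=\Real{\Delta^n}$ is the natural candidate.
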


\begin{proof}
Let $\bar{\varphi}\colon \Delta^k\to N(P)$ be the inclusion $\Im(\varphi)\to N(P)$. Enumerate the vertices of $\Delta^k\in N(P)$ as $\{p_0<\dots<p_k\}$. Enumerate the vertices of $\Delta^n$ as $\{e_0^{p_0},e_1^{p_0},\dots,e_{m_0}^{p_0},e_0^{p_1},\dots,e_{m_k}^{p_k}\}$, where $\varphi(e_i^{p_j})=p_j$. Consider the map defined on vertices as
\begin{align*}
f\colon\Delta^k\times\Delta^{n}&\to\Delta^n\\
(p_l,e_i^{p_j})&\mapsto \left\{\begin{array}{cl}
e_i^{p_j}& \text{ if $p_j=p_l$}\\
e_0^{p_l}& \text{ if $p_j\not = p_l$}
\end{array}\right.
\end{align*}
It extends to a map of simplicial sets. We then have the following retract
\begin{equation*}
\begin{tikzcd}
\Delta^n
\arrow{r}{(\varphi,\Id)}
\arrow{d}{\varphi}
&\Delta^k\times\Delta^n
\arrow{d}{\bar{\varphi}\circ \pr_{\Delta^k}}
\arrow{r}{f}
&\Delta^n
\arrow{d}{\varphi}
\\
N(P)
\arrow{r}{=}
&N(P)
\arrow{r}{=}
&N(P)
\end{tikzcd}
\end{equation*}
Taking the realization gives the desired retract.
\end{proof}

\subsection{The model category $\Strat$}
\label{SubsectionCMFStrat}
\begin{defin}
Let $f\colon (X,P,\varphi_X)\to (Y,Q,\varphi_Y)$ be a stratified map, such that $\widehat{f}\colon P\to Q$ is an isomorphism of posets. We will also write $\widehat{f}$ for the corresponding isomorphism $\widehat{f}\colon N(P)\to N(Q)$. Define the functor $s\pi_0(Y,Q,\varphi_Y)\circ\widehat{f}$ as follows
\begin{align*}
s\pi_0(Y,Q,\varphi_Y)\circ\widehat{f}\colon R(P)^{\op}&\to \Set\\
\Delta^{\psi}&\mapsto s\pi_0(Y,Q,\varphi_Y)(\Delta^{\widehat{f}\circ\psi})
\end{align*}
Let $\phi\colon \RealP{\Delta^{\varphi}}\to (X,P,\varphi_X)$ be a pointing, and $n\geq 1$ an integer. Define the pointing of $(Y,Q,\varphi_Y)$, $f\circ \phi$ as
\begin{equation*}
f\circ\phi\colon \Real{\Delta^{\widehat{f}\circ\varphi}}_Q\to (Y,Q,\varphi_Y)
\end{equation*}
Then, define the functor $s\pi_n((Y,Q,\varphi_Y),f\circ \phi)\circ\widehat{f}$ as follows
\begin{align*}
s\pi_n((Y,Q,\varphi_Y),f\circ\phi)\circ \widehat{f}\colon R(\Im(\varphi))^{\op}&\to \Set\\
\Delta^{\psi}&\mapsto s\pi_n((Y,Q,\varphi_Y),f\circ\phi)(\Delta^{\widehat{f}\circ\psi})
\end{align*}
\end{defin}

\begin{prop}
Let $f\colon (X,P,\varphi_X)\to (Y,Q,\varphi_Y)$ be a stratified map, such that $\widehat{f}\colon P\to Q$ is an isomorphism of posets. Then, $f$ induces a morphism of functors
\begin{equation*}
s\pi_0(f)\colon s\pi_0(X,P,\varphi_X)\to s\pi_0(Y,Q,\varphi_Y)\circ\widehat{f}
\end{equation*}
Furthermore, for all pointing of $(X,P,\varphi_X)$, $\phi$, and all $n\geq 1$, $f$ induces morphisms of functors
\begin{equation}\label{EquationSPiNFChapeau}
s\pi_n(f)\colon s\pi_n((X,P,\varphi_X),\phi)\to s\pi_n((Y,Q,\varphi_Y),f\circ\phi)\circ\widehat{f}
\end{equation}
\end{prop}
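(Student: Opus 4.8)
The plan is to reduce everything to the already-established functoriality of the stratified homotopy groups for \emph{filtered} maps, i.e.\ to Propositions \ref{PropositionSPiNFonctorial} and the definitions in Section \ref{SubsectionSPiN}. The key observation is that a stratified map $f\colon (X,P,\varphi_X)\to (Y,Q,\varphi_Y)$ with $\widehat f\colon P\to Q$ an isomorphism of posets factors through the pushforward functor $\widehat f_*\colon \Top_P\to\Top_Q$ (or rather through $\widehat f_*$ applied to the strongly-stratified model), and that $\widehat f_*$ is, on underlying spaces, the identity. Concretely, since $\widehat f$ is an isomorphism, $\Real{N(\widehat f)}\colon \Real{N(P)}\to\Real{N(Q)}$ is a homeomorphism, and $f$ is the same data as a \emph{filtered} map $f'\colon (X\times_P\Real{N(P)},P,\widetilde{\varphi_X})\to \widehat f^*(Y\times_Q\Real{N(Q)})$ in $\Top_P$ after transporting along $\widehat f$. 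So the strategy is: transport the target along the isomorphism $\widehat f$ to land in $\Top_P$, apply the known functoriality there, then transport back.

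First I would make precise the identification $R(P)\cong R(Q)$ induced by $\widehat f$: since $\widehat f$ is an isomorphism of posets it carries non-degenerate simplices of $N(P)$ bijectively to non-degenerate simplices of $N(Q)$, $\Delta^{\psi}\mapsto\Delta^{\widehat f\circ\psi}$, and this is an isomorphism of categories $R(\widehat f)\colon R(P)\xrightarrow{\ \simeq\ }R(Q)$ (and similarly $R(\Im(\varphi))\xrightarrow{\simeq}R(\Im(\widehat f\circ\varphi))$). The functors $s\pi_0(Y,Q,\varphi_Y)\circ\widehat f$ and $s\pi_n((Y,Q,\varphi_Y),f\circ\phi)\circ\widehat f$ in the statement are by definition precomposition with this isomorphism, so they are again functors on $R(P)^{\op}$, resp.\ $R(\Im(\varphi))^{\op}$, and it makes sense to ask for a natural transformation from $s\pi_0(X,P,\varphi_X)$, resp.\ $s\pi_n((X,P,\varphi_X),\phi)$, into them.

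Then I would build the natural transformation levelwise. For a fixed $\Delta^{\psi}\in R(P)$ (with $\Delta^{\psi}\subseteq\Delta^{\varphi}$ in the pointed case), the map $f$ together with $\widehat f$ gives, by composition, a continuous map of filtered pointings $\RealP{\Delta^{\psi}}\to\RealP{\Delta^{\varphi}}\xrightarrow{\phi}(X,\varphi_X)\xrightarrow{f}(Y,\varphi_Y)$ sitting over $\Real{N(\widehat f)}$; composing with the homeomorphism $\Real{N(\widehat f)}$ identifies this with a filtered map over $Q$ out of $\Real{\Delta^{\widehat f\circ\psi}}_Q$. Passing to $\Sing\circ\CNP(-,-)$ and then $\pi_n$ exactly as in the definition of $D$ and of $s\pi_n$, and using Proposition \ref{PropositionSPiNFonctorial} on the factor of $f$ that \emph{is} filtered over $Q$ after transport, produces the map $s\pi_n(\fil X,\phi)(\Delta^{\psi})\to s\pi_n(\fil Y,f\circ\phi)(\Delta^{\widehat f\circ\psi})$. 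Naturality in $\Delta^{\psi}$ — i.e.\ compatibility with the restriction maps induced by inclusions $\Delta^{\psi_1}\subseteq\Delta^{\psi_2}$ — follows because all the constructions ($D$, $\Sing$, $\CNP$, $\pi_n$, and the isomorphism $R(\widehat f)$) are themselves functorial, and the square of realizations commutes by the defining commutativity of a stratified map. The $n=0$ case is identical with $\pi_0$ in place of $\pi_n$ and needs no pointing.

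The only real subtlety — and the step I would be most careful about — is bookkeeping the base change along $\widehat f$ cleanly: one must check that conjugating by the homeomorphism $\Real{N(\widehat f)}$ turns the non-filtered map $f$ (it changes the poset) into genuinely filtered data over a single poset, so that the previously proved invariance results apply verbatim. Since $\widehat f$ is invertible this is automatic, but it is worth spelling out that $\widehat f^*$ is (on spaces over the homeomorphic base) simply relabeling, hence an isomorphism of categories $\Top_Q\xrightarrow{\simeq}\Top_P$ compatible with $D$, $s\pi_n$ and the $R(-)$'s. Once that identification is in place, everything reduces to Proposition \ref{PropositionSPiNFonctorial}, so no new homotopy-theoretic input is needed; the proof is essentially a diagram chase through Section \ref{SubsectionSPiN}.
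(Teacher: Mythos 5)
Your proposal is correct and follows essentially the same route as the paper: the paper also uses the isomorphism $\alpha=\widehat{f}$ to relabel $(X,P,\varphi_X)$ as an object over $Q$ (via the factorization $f=f_{\triangleright}\circ(\Id_X,\alpha)$ of Proposition \ref{PropositionTriangleLeftRight}), identifies $s\pi_n$ of the relabeled space with $s\pi_n((X,P,\varphi_X),\phi)$ through the induced isomorphism $R(P)\cong R(Q)$, and then applies the filtered functoriality of Proposition \ref{PropositionSPiNFonctorial} to the genuinely filtered map $f_{\triangleright}$. Whether one pushes the source forward to $\Top_Q$ (as the paper does) or pulls the target back to $\Top_P$ (as you suggest) is immaterial, since $\alpha_*$ and $\alpha^*$ are inverse equivalences.
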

\begin{proof}
Define $s\pi_n(f)$ as follows. An element in $s\pi_n((X,P,\varphi_X),\phi)(\Delta^{\varphi})$ is an element in 
\begin{equation*}
\pi_n\left(\Sing\left(\C^0_P\left(\RealP{\Delta^{\varphi}},(X,P,\varphi_X)\right)\right),\phi_{|\RealP{\Delta^{\varphi}}}\right)
\end{equation*}
Using the isomorphism $\alpha\colon P\to Q$ this is in bijection with
\begin{equation}\label{EquationPiNcircf}
\pi_n\left(\Sing\left(\C^0_Q\left(\Real{\Delta^{\alpha\circ\varphi}}_Q,(X,Q,\alpha\circ\varphi_X)\right)\right),\phi_{|\Real{\Delta^{\alpha\circ\varphi}}_Q}\right)
\end{equation}
the map $f$ induces a map $f_{\triangleright}\colon (X,P,\alpha\circ\varphi_X)\to(Y,\varphi_Y,Q)$ in $\Top_Q$ (see Proposition \ref{PropositionTriangleLeftRight}) which in turns sends an element of (\ref{EquationPiNcircf}) to an element in
\begin{equation*}
\pi_n\left(\Sing\left(\C^0_Q\left(\Real{\Delta^{\alpha\circ\varphi}}_Q,(Y,Q,\varphi_Y)\right)\right),f\circ\phi_{|\Real{\Delta^{\alpha\circ\varphi}}_Q}\right)
\end{equation*}
The composition gives us a map
\begin{equation*}
\begin{tikzcd}[column sep = large]
s\pi_n((X,P,\varphi_X),\phi)(\Delta^{\varphi})
\arrow[swap]{d}{\simeq}
\arrow{dr}{s\pi_n(f)}
\\
s\pi_n((X,Q,\alpha\circ\varphi_X),\phi)(\Delta^{\alpha\circ\varphi})
\arrow[swap]{r}{s\pi_n(f_{\triangleright})}
&s\pi_n((Y,Q,\varphi_Y),f\circ\phi)(\Delta^{\alpha\circ\varphi})
\end{tikzcd}
\end{equation*}
Which is exactly the map (\ref{EquationSPiNFChapeau}) evaluated at $\Delta^{\varphi}$.
\end{proof}

The goal of this section is to prove the following theorem

\begin{theo}\label{TheoremCMFStrat}
There exists a cofibrantly generated model structure on $\Strat$ such that a stratified map $f\colon (X,P,\varphi_X)\to (Y,Q,\varphi_Y)$ is a weak-equivalence if and only if $\widehat{f}\colon P\to Q$ is an isomorphism of poset, and if the map
\begin{equation*}
s\pi_0(f)\colon s\pi_0(X,P,\varphi_X)\to s\pi_0(Y,Q,\varphi_Y)\circ\widehat{f}
\end{equation*}
is an isomorphism, and if
for all pointings of $X$, $\phi$, and all $n\geq 1$, the map
\begin{equation*}
s\pi_n(f)\colon s\pi_n((X,P,\varphi_X),\phi)\to s\pi_n((Y,Q,\varphi_Y),f\circ \phi)\circ \widehat{f}
\end{equation*}
is an isomorphism. 
The generating set of cofibration and trivial cofibrations are the following
\begin{itemize}
\item $I=\{ S^{n-1}\times\Real{\Delta^{\varphi}}_{\mathbb{N}}\to D^{n}\times \Real{\Delta^{\varphi}}_{\mathbb{N}}\ | n\geq 0, \varphi\colon \Delta^k\to N(\mathbb{N})\}$
\item $J=\{D^n\times \Real{\Delta^{\varphi}}_{\mathbb{N}}\to D^n\times [0,1]\times \Real{\Delta^{\varphi}}_{\mathbb{N}}\ |\ n\geq 0, \varphi\colon \Delta^k\to N(\mathbb{N})\}$
\end{itemize}
\end{theo}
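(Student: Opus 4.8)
The plan is to exhibit $\Strat$ as the total category of a Grothendieck bifibration over the category $\Poset$ of posets whose fibre over $P$ is $\Top_P$, and then to invoke the gluing theorem \cite[Theorem 4.2]{Cagne}. Concretely, the forgetful functor $\pi\colon\Strat\to\Poset$, $(X,P,\varphi_X)\mapsto P$, is at once a Grothendieck fibration and opfibration: for $\alpha\colon P\to Q$, the opcartesian lift at $(X,P,\varphi_X)$ is computed by $\alpha_*$ and the cartesian lift at $(Y,Q,\varphi_Y)$ by $\alpha^*$ (the functors of Section \ref{SubsectionQAAlpha}). I would equip $\Poset$ with its minimal model structure, in which the weak equivalences are exactly the isomorphisms and every map is both a cofibration and a fibration; since $\Poset$ is locally finitely presentable this structure is cofibrantly generated (one may take $\{\emptyset\hookrightarrow[0],\ \partial\Delta^1\hookrightarrow\Delta^1\}$ as generating cofibrations and the empty set of generating trivial cofibrations, since LLP against isomorphisms is vacuous and RLP against that set cuts out exactly the order-reflecting bijections, i.e.\ the isomorphisms). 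Each fibre $\Top_P$ carries the cofibrantly generated model structure of Theorem \ref{TheoCMFTopP}, and $\Strat$ is locally presentable, by the same kind of reasoning as in Lemma \ref{LemmeHypotheseCategoriques} (a Grothendieck construction over a locally presentable base with locally presentable fibres and accessible transition functors).

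With this data in place, I would verify the hypotheses of \cite[Theorem 4.2]{Cagne}. The only substantive one is condition $(Q)$: for every weak equivalence $\alpha$ of the base — that is, for every isomorphism of posets — the Quillen adjunction $\alpha_*\dashv\alpha^*$ must be a Quillen equivalence; this is precisely the content of Proposition \ref{PropositionQETopPTopQ}, which also provides the remaining ingredient that $\alpha_*$ is left Quillen for arbitrary $\alpha$. Granting the theorem, it produces a model structure on $\Strat$ in which the (trivial) fibrations are the maps $f$ whose image $\widehat f$ in $\Poset$ is a (trivial) fibration and whose vertical comparison $\widehat f_*(X,P,\varphi_X)\to(Y,Q,\varphi_Y)$ is a (trivial) fibration of $\Top_Q$, the cofibrations are defined by left lifting against trivial fibrations, and $f$ is a weak equivalence iff $\widehat f$ is a weak equivalence of $\Poset$ and $\widehat f_*(X,P,\varphi_X)\to(Y,Q,\varphi_Y)$ is a weak equivalence of $\Top_Q$.

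It then remains to match this abstract description with the asserted statement. The clause ``$\widehat f$ is a weak equivalence of $\Poset$'' says exactly that $\widehat f$ is an isomorphism of posets. Assuming this, the vertical comparison is the map $f_\triangleright$ of the proposition preceding Theorem \ref{TheoremCMFStrat}, and by the evident $\Top_Q$-analogue of Remark \ref{RemarqueWeakEquivalenceIsoGroupesHomtopies} (together with the definition of the stratified homotopy groups of a filtered space) it is a weak equivalence of $\Top_Q$ precisely when $s\pi_n(f_\triangleright)$ is an isomorphism for every stratified pointing and every $n\geq0$. Since $s\pi_n(f)$ was defined as $s\pi_n(f_\triangleright)$ precomposed with the isomorphism of indexing categories induced by $\widehat f$, this is equivalent to the stated conditions on $s\pi_0(f)$ and on $s\pi_n(f)$ for $n\geq1$.

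Finally, for the generating sets, \cite[Theorem 4.2]{Cagne} outputs as generators the base generators together with, for each poset $P$, the fibre generators of $\Top_P$ seen inside $\Strat$. I would reduce this list to the stated one: the base generators $\emptyset\hookrightarrow[0]$ and $\partial\Delta^1\hookrightarrow\Delta^1$ already occur among the members of $I$ with $n=0$; and a fibre generator $\RealP{\partial\Delta^n\otimes\Delta^\varphi\to\Delta^n\otimes\Delta^\varphi}$ of $\Top_P$ is homeomorphic, as a morphism of $\Strat$, to $S^{n-1}\times\RealP{\Delta^\varphi}\to D^n\times\RealP{\Delta^\varphi}$, where $\Delta^\varphi\in R(P)$ is a finite chain. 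Every such chain $\Im(\varphi)$ is isomorphic to a unique chain of $\mathbb{N}$, so — using that pushforward along the poset inclusion $\Im(\varphi)\hookrightarrow P$ carries a generating (trivial) cofibration to a generating (trivial) cofibration, or to a retract of one (Proposition \ref{PropositionQETopPTopQ}, Lemma \ref{LemmeRetractNonDegenerateSimplex}) — it suffices to keep the fibre generators with $P=\mathbb{N}$, which are exactly the elements of $I$; the same computation, with the inclusion $\Real{\Lambda^n_k}\hookrightarrow\Real{\Delta^n}$ being homeomorphic to $D^{n-1}\hookrightarrow D^{n-1}\times[0,1]$, yields $J$. The points I expect to cost the most effort are the faithful translation of the abstract weak equivalences into the $s\pi_\bullet$-conditions (bookkeeping of pointings and of the $\widehat f$-twist) and, above all, this last reduction of the generating sets to the single poset $\mathbb{N}$ — i.e.\ checking that the smaller sets $I$ and $J$ still detect exactly the trivial fibrations and fibrations of the glued model structure.
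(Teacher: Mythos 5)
Your proposal follows the paper's proof essentially verbatim: the same Grothendieck bifibration over $\Poset$ equipped with its trivial model structure, the same appeal to \cite[Theorem 4.2]{Cagne} with hypothesis $(Q)$ supplied by Proposition \ref{PropositionQETopPTopQ}, the same translation of the abstract weak-equivalences into the $s\pi_\bullet$-conditions through the factorizations of Proposition \ref{PropositionTriangleLeftRight}, and the same reduction of the generating sets to the single poset $\mathbb{N}$ via the adjunctions $(\alpha_*,\alpha^*)$ together with Lemma \ref{LemmeRetractNonDegenerateSimplex}. One correction: in the glued structure the (trivial) fibrations are detected by the cartesian comparison $f^{\triangleleft}$ in $\Top_P$ and the cofibrations by the opcartesian comparison $f_{\triangleright}$ in $\Top_Q$ (Definition \ref{DefinitionClassesOfStrat}), not, as you state, both by $f_{\triangleright}$; this is harmless for the weak-equivalence clause, where $\widehat{f}$ is an isomorphism and the two comparisons agree up to the equivalence $\alpha_*\dashv\alpha^*$, but the verification that the $J$-injective maps are exactly the fibrations must be carried out on the $f^{\triangleleft}$ side, which is what the paper does by transporting each lifting problem across $(\alpha_*,\alpha^*)$.
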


In order to prove Theorem \ref{TheoremCMFStrat}, we will need the following :
\begin{prop}\label{PropositionTriangleLeftRight}
Let $f\colon (X,P,\varphi_X)\to(Y,Q,\varphi_Y)$ be a stratified map, and write $\alpha=\widehat{f}\colon P\to Q$. The stratified map $f$ admits the following two factorisations :
\begin{equation*}
\begin{tikzcd}
(X,P,\varphi_X)
\arrow{r}
\arrow[swap]{d}{f^{\triangleleft}}
\arrow{dr}{f}
&(X,Q,\alpha\circ\varphi_X)
\arrow{d}{f_{\triangleright}}
\\
(Y\times_Q P,P,\alpha^*\varphi_Y)
\arrow{r}
&(Y,Q,\varphi_Y)
\end{tikzcd}
\end{equation*}
where $f^{\triangleleft}$ is a map in $\Top_P$ and $f_{\triangleright}$ is a map in $\Top_Q$.
\end{prop}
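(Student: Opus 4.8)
The plan is to build the four morphisms of the square by hand and to check, in each case, that the chosen continuous map and the chosen map of posets are compatible with the stratifications, so that we genuinely obtain a morphism of $\Strat$, and then that the two triangles commute. Everything reduces to the universal property of the pullback $Y\times_Q P$ appearing in the definition of $\alpha^*$ and to the fact that an order-preserving map of posets is continuous for the Alexandrov topology; no homotopy-theoretic input is needed.

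For the upper-right route, I would define the top horizontal map $\iota_X\colon(X,P,\varphi_X)\to(X,Q,\alpha\circ\varphi_X)$ to be the pair $(\Id_X,\alpha)$. Its compatibility square reads $\alpha\circ\varphi_X=(\alpha\circ\varphi_X)\circ\Id_X$ and holds tautologically, and indeed $(X,Q,\alpha\circ\varphi_X)=\alpha_*(X,P,\varphi_X)$. Then $f_{\triangleright}$ is the morphism with underlying map $f\colon X\to Y$ and $\widehat{f_{\triangleright}}=\Id_Q$; compatibility here requires $\varphi_Y\circ f=\alpha\circ\varphi_X$, which is precisely the hypothesis that $f$ is a stratified map with $\widehat f=\alpha$, so $f_{\triangleright}$ is a genuine morphism of $\Top_Q$. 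The composite $f_{\triangleright}\circ\iota_X$ has underlying map $f$ and poset part $\Id_Q\circ\alpha=\alpha$, hence it equals $f$.

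For the lower-left route, the universal property of the pullback $Y\times_Q P$ applied to the pair of maps $f\colon X\to Y$ and $\varphi_X\colon X\to P$ — which agree after composing down to $Q$, again because $f$ is stratified — produces a continuous map $\langle f,\varphi_X\rangle\colon X\to Y\times_Q P$ with $\pr_Y\circ\langle f,\varphi_X\rangle=f$ and $\alpha^*\varphi_Y\circ\langle f,\varphi_X\rangle=\varphi_X$ (here $\alpha^*\varphi_Y$ is the second projection of the pullback). I would set $f^{\triangleleft}=(\langle f,\varphi_X\rangle,\Id_P)$; the second identity shows it is a morphism of $\Top_P$, and it is exactly the adjoint transpose of $f_{\triangleright}$ under the adjunction $\alpha_*\dashv\alpha^*$ of Proposition \ref{PropositionQETopPTopQ}. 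The bottom horizontal map is $(\pr_Y,\alpha)\colon(Y\times_Q P,P,\alpha^*\varphi_Y)\to(Y,Q,\varphi_Y)$, which is stratified precisely because $\varphi_Y\circ\pr_Y=\alpha\circ\alpha^*\varphi_Y$ is the commutativity of the defining pullback square. Its composite with $f^{\triangleleft}$ has underlying map $\pr_Y\circ\langle f,\varphi_X\rangle=f$ and poset part $\alpha\circ\Id_P=\alpha$, so it too equals $f$.

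Commutativity of the square is then automatic, since both of its legs, with the diagonal $f$ inserted, have just been identified with $f$. I do not anticipate a genuine obstacle; the only point deserving a remark is that the pullback $Y\times_Q P$ is computed in the category of $\Delta$-generated spaces, but this is precisely the pullback already used to define $\alpha^*$, so its universal property is available and no new verification is required.
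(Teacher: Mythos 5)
Your construction is correct and matches the paper's own (very terse) argument: the paper likewise defines $f_{\triangleright}=(f,\Id_Q)$, observes the upper triangle commutes tautologically, and obtains $f^{\triangleleft}$ "by adjunction," which is exactly your explicit use of the universal property of the pullback $Y\times_QP$. Nothing is missing.
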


\begin{proof}
The filtered map $f_{\triangleright}$ is simply $f\colon X\to Y$ together with $\Id_Q$, and we clearly have $(f,\alpha)=(\Id_X,\alpha)\circ(f,\Id_Q)$. We obtain the other half of the square by adjunction.
\end{proof}

We can now define the following classes of maps.
\begin{defin}\label{DefinitionClassesOfStrat}
A stratified map $f\colon (X,P,\varphi_X)\to (Y,Q,\varphi_Y)$ is :
\begin{itemize}
\item a stratified cofibration if $f_{\triangleright}$ is a cofibration in $\Top_Q$.
\item a stratified fibration if $f^{\triangleleft}$ is a fibration in $\Top_P$.
\item a stratified trivial cofibration if $f_{\triangleright}$ is a trivial cofibration in $\Top_Q$.
\item a stratified trivial fibration if $f^{\triangleleft}$ is a trivial fibration in $\Top_P$.
\end{itemize}
\end{defin}

\begin{proof}[Proof of Theorem \ref{TheoremCMFStrat}]
Consider the functor
\begin{align*}
F\colon \Strat&\to \text{Poset}\\
(X,P,\varphi_X)&\mapsto P
\end{align*}
where $\text{Poset}$ is the category of posets and order preserving maps.
It is a Grothendieck bifibration \cite[Definition 2.1]{Cagne}. Consider the trivial model structure on $\text{Poset}$, where weak-equivalences are the isomorphisms, and all maps are cofibrations and fibrations. By proposition \ref{PropositionQETopPTopQ} hypothesis $(Q)$ of \cite[Theorem 4.2]{Cagne} is verified, and the other hypothesis follow easily from the fact that $\text{Poset}$ is taken with the trivial model structure. Applying \cite[Theorem 4.2]{Cagne} to $F$ gives a model structure on $\Strat$ with the classes of (trivial) cofibrations and fibrations of definition \ref{DefinitionClassesOfStrat}. Let us show that it coincides with the model structure described in Theorem \ref{TheoremCMFStrat}. Let $f\colon (X,P,\varphi_X)\to (Y,Q,\varphi_Y)$ be a stratified map, and factor it as a trivial cofibration followed by a fibration. 
\begin{equation*}
\begin{tikzcd}
(X,P,\varphi_X)
\arrow{r}{j}
&(Z,R,\varphi_Z)
\arrow{r}{p}
&(Y,Q,\varphi_Y)
\end{tikzcd}
\end{equation*}
Write $\widehat{j}=\beta$ and $\widehat{p}=\gamma$, and consider the following commutative diagram :
\begin{equation*}
\begin{tikzcd}
(X,P,\varphi_X)
\arrow[bend right = 80, swap]{dd}{f^{\triangleleft}}
\arrow{r}
\arrow{d}{j^{\triangleleft}}
\arrow{dr}{j}
&(X,R,\beta\circ\varphi_X)
\arrow{r}
\arrow{dr}
\arrow{d}{j_{\triangleright}}
&(X,Q,\alpha\circ\varphi_X)
\arrow{d}
\arrow[bend left = 80]{dd}{f_{\triangleright}}
\\
(Z\times_RP,P,\beta^*\varphi_Z)
\arrow{r}
\arrow{dr}
\arrow{d}
&(Z,R,\varphi_Z)
\arrow{r}
\arrow{dr}{p}
\arrow{d}{p^{\triangleleft}}
&(Z,Q,\gamma\circ\varphi_Z)
\arrow{d}{p_{\triangleright}}
\\
(Y\times_QP,P,\alpha^*\varphi_Y)
\arrow{r}
&(Y\times_QR,R,\gamma^*\varphi_Y)
\arrow{r}
&(Y,Q,\varphi_Y)
\end{tikzcd}
\end{equation*}

$f$ is a weak-equivalence if and only if $p$ is a trivial fibration. The latter is true if and only if $\gamma$ is an isomorphism and if $p^{\triangleleft}$ is a trivial fibration. If $\gamma$ is an isomorphism, all rows are weak-equivalences. By repeated applications of the two out of three property, we get that $f$ is a weak-equivalence if and only if all rows and columns are weak-equivalences. In particular, $f$ is a weak-equivalenc if and only if $f^{\triangleleft}$ is a weak-equivalence in $\Top_P$. Lemma \ref{LemmeFTriangleLeftWeakEquivalence} then gives the desired characterization of weak-equivalences.

It remains to show that $I$ and $J$ are sets of generating (trivial) cofibrations.
Let $f\colon (X,P,\varphi_X)\to(Y,Q,\varphi_Y)$ be a stratified map, and consider the following lifting problem
\begin{equation}\label{DiagramGeneratingCofibrations}
\begin{tikzcd}
S^{n-1}\times\Real{\Delta^{\varphi}}_{\mathbb{N}}
\arrow{d}
\arrow{r}{b}
&(X,P,\varphi_X)
\arrow{d}{f}
\\
D^{n}\times \Real{\Delta^{\varphi}}_{\mathbb{N}}
\arrow{r}{c}
\arrow[dashrightarrow]{ur}
&(Y,Q,\varphi_Y)
\end{tikzcd}
\end{equation}
Write $\widehat{f}=\alpha$, $\widehat{b}=\beta\colon \N\to P$ and $\widehat{c}=\gamma\colon \N\to Q$. The commutativity of the diagram (\ref{DiagramGeneratingCofibrations}) guarantees that $\alpha\circ\beta= \gamma$. Factoring $c$ as follows
\begin{equation*}
\begin{tikzcd}
D^{n}\times \Real{\Delta^{\varphi}}_{\mathbb{N}}
\arrow{r}{d}
&D^{n}\times \RealP{\Delta^{\alpha\circ\varphi}}
\arrow{r}{e}
&(Y,Q,\varphi_Y)
\end{tikzcd}
\end{equation*}
where $\widehat{d}=\beta$ and $\widehat{e}=\alpha$.
We can now use adjunction $(\alpha_*,\alpha^*)$, to get a lifting problem in $\Top_P$, equivalent to (\ref{DiagramGeneratingCofibrations}). 
\begin{equation*}
\begin{tikzcd}
S^{n-1}\times\RealP{\Delta^{\alpha\circ\varphi}}
\arrow{d}
\arrow{r}{b_{\triangleright}}
&(X,P,\varphi_X)
\arrow{d}{f^{\triangleleft}}
\\
D^{n}\times \RealP{\Delta^{\alpha\circ\varphi}}
\arrow{r}{e^{\triangleleft}}
\arrow[dashrightarrow]{ur}
&(Y\times_Q P,P,\alpha^*\varphi_Y)
\end{tikzcd}
\end{equation*}
The maps $S^{n-1}\times\RealP{\Delta^{\alpha\circ\varphi}}\to D^{n}\times \RealP{\Delta^{\alpha\circ\varphi}}$ are not necessarily generating cofibration since $\alpha\circ\varphi\colon \Delta^k\to N(P)$ might not be injective, but by lemma \ref{LemmeRetractNonDegenerateSimplex} $f^{\triangleleft}$ is a trivial fibration of $\Top_P$ if and only if it has the right lifting property against all maps of this form. In particular, $f$ is a trivial fibration if and only if it has the right lifting property against all maps of $I$. The proofs works similarly for the set of generating trivial cofibrations.
\end{proof}

\begin{lemme}\label{LemmeFTriangleLeftWeakEquivalence} Let $f\colon(X,P,\varphi_X)\to (Y,Q,\varphi_Y)$ be a stratified map such that $\widehat{f}=\alpha\colon P\to Q$ is an isomorphism of posets. Then $f^{\triangleleft}\colon (X,P,\varphi_X)\to (Y\times_QP,P,\alpha^*\varphi_Y)$ is a weak-equivalence in $\Top_P$ if and only if $f$ induces isomorphisms
\begin{equation*}
s\pi_0(f)\colon s\pi_0(X,P,\varphi_X)\to s\pi_0(Y,Q,\varphi_Y)\circ\widehat{f}
\end{equation*}
and, 
\begin{equation*}
s\pi_n(f)\colon s\pi_n((X,P,\varphi_X),\phi)\to s\pi_n((Y,Q,\varphi_Y),f\circ\phi)\circ\widehat{f}
\end{equation*}
for all pointing of $(X,P,\varphi_X)$, $\phi$, and all $n\geq 1$.
\end{lemme}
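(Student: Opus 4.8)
The plan is to unwind the definitions on both sides and match them level by level, after reducing to the case where $\widehat f$ is the identity. The first move is precisely that reduction: by Proposition \ref{PropositionQETopPTopQ}, when $\alpha=\widehat f$ is an isomorphism of posets the functors $\alpha_*$ and $\alpha^*$ are mutually inverse isomorphisms of categories which exchange the generating (trivial) cofibrations up to isomorphism, hence identify the model structures on $\Top_P$ and $\Top_Q$. Under this identification $f^{\triangleleft}=\alpha^*(f_{\triangleright})$, where $f_{\triangleright}\colon(X,Q,\alpha\circ\varphi_X)\to(Y,Q,\varphi_Y)$ is the $\Top_Q$-map of Proposition \ref{PropositionTriangleLeftRight}, so $f^{\triangleleft}$ is a weak-equivalence in $\Top_P$ if and only if $f_{\triangleright}$ is one in $\Top_Q$. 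Likewise the reindexing functors $\Delta^{\psi}\mapsto\Delta^{\widehat f\circ\psi}$ used to form $s\pi_*(Y,Q,\varphi_Y)\circ\widehat f$ are isomorphisms of indexing categories, and since $s\pi_n(f)$ was built through $f_{\triangleright}$ in the proposition preceding Theorem \ref{TheoremCMFStrat}, the maps $s\pi_n(f)$ of the statement are identified with the maps $s\pi_n(f_{\triangleright})$ of the honest filtered map $f_{\triangleright}$. So it suffices to treat $P=Q$, $\widehat f=\Id_P$, in which case $f$ is a filtered map $(X,\varphi_X)\to(Y,\varphi_Y)$ of $\Top_P$ and $f^{\triangleleft}=f$.

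Next I would translate both sides through $D_P$. On one hand, by Theorem \ref{TheoCMFTopP} and the definition of weak-equivalences in $\Diag_P$, $f$ is a weak-equivalence in $\Top_P$ if and only if $D_P(f)(\Delta^{\varphi})$ is a weak-equivalence of simplicial sets for every $\Delta^{\varphi}\in R(P)$. On the other hand, the adjunction $(\varphi_P\circ-,\,-\times_P\Real{N(P)})$ identifies a filtered pointing $\phi\colon\RealP{\Delta^{\varphi}}\to(X,\varphi_X)$ with a $0$-simplex of $D_P(X)(\Delta^{\varphi})$; restricting $\phi$ along $\Delta^{\psi}\subseteq\Delta^{\varphi}$ corresponds to transporting that $0$-simplex to $D_P(X)(\Delta^{\psi})$; and by construction $s\pi_n((X,\varphi_X),\phi)(\Delta^{\psi})=\pi_n\bigl(D_P(X)(\Delta^{\psi}),\phi_{|\Delta^{\psi}}\bigr)$ for $n\geq1$ while $s\pi_0(X,\varphi_X)(\Delta^{\psi})=\pi_0\bigl(D_P(X)(\Delta^{\psi})\bigr)$, naturally in $\Delta^{\psi}$; the same holds for $Y$ and $f$.

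Finally I would invoke the classical characterization of weak-equivalences of simplicial sets: $g\colon K\to L$ is a weak-equivalence if and only if $\pi_0(g)$ is a bijection and $\pi_n(g,x)$ is an isomorphism for every $0$-simplex $x$ of $K$ and every $n\geq1$ (testing at basepoints of $K$ suffices once $\pi_0(g)$ is surjective, since every nonempty simplicial set has a $0$-simplex). Applying this to each $g=D_P(f)(\Delta^{\varphi})$ and feeding in the translations above, $f$ is a weak-equivalence in $\Top_P$ precisely when $s\pi_0(f)$ is an isomorphism of functors $R(P)^{\op}\to\Set$ and, for every $0$-simplex of every $D_P(X)(\Delta^{\varphi})$ and every $n\geq1$, the induced map on $\pi_n$ is an isomorphism; since every pointing of $\Delta^{\psi}$ is the restriction to $\Delta^{\psi}$ of itself and every $0$-simplex of $D_P(X)(\Delta^{\psi})$ comes from a pointing, this last condition is exactly the assertion that $s\pi_n(f)$ is an isomorphism of functors for all pointings and all $n\geq1$. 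Both implications follow.

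I expect the only genuinely delicate point to be the quantifier bookkeeping in this last step: one must check that testing $\pi_n$ only at $0$-simplices coming from $D_P(X)$ is enough (which uses surjectivity of $\pi_0$) and that the family of conditions indexed by all pointings together with all their restrictions to subsimplices collapses to the levelwise condition that matches the definition of weak-equivalences in $\Diag_P$. Everything else is a direct unwinding of the definitions already in place.
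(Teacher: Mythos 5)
Your argument is correct and is in substance the paper's own: both hinge on identifying $s\pi_n(f)$ with $s\pi_n(f^{\triangleleft})$ through the isomorphism induced by $\alpha$ (the paper does this via the factorization $s\pi_n(f)=s\pi_n(\pr_Y)\circ s\pi_n(f^{\triangleleft})$ with $s\pi_n(\pr_Y)$ an isomorphism, you by transporting along the category isomorphism $\alpha^*$), followed by the characterization of weak-equivalences in $\Top_P$ through stratified homotopy groups. The only difference is that the paper invokes that characterization directly as Remark \ref{RemarqueWeakEquivalenceIsoGroupesHomtopies}, whereas you re-derive it by unwinding $D_P$ and doing the basepoint bookkeeping explicitly, which is sound.
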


\begin{proof}
The map induced by $f$ on stratified homotopy groups factors in two ways as follows
\begin{equation*}
\begin{tikzcd}[column sep = large]
s\pi_n((X,P,\varphi_X),\phi)
\arrow[swap]{d}{\simeq}
\arrow{r}{s\pi_n(f^{\triangleleft})}
\arrow[swap]{dr}{s\pi_n(f)}
&s\pi_n((Y\times_QP,P,\alpha^*\varphi_Y),f^{\triangleleft}\circ\phi)
\arrow{d}{s\pi_n(\pr_Y)}
\\
s\pi_n((X,P,\alpha\circ\varphi_X),\phi)\circ\alpha
\arrow[swap]{r}{s\pi_n(f_{\triangleright})}
&s\pi_n((Y,Q,\varphi_Y),f\circ\phi)\circ\alpha
\end{tikzcd}
\end{equation*}
Now, $f^{\triangleleft}$ is a weak-equivalence if and only if $s\pi_n(f^{\triangleleft})$ is an isomorphism for all $n$ and $\phi$. But the latter is true if and only if $s\pi_n(f)$ is an isomorphism for all $n$ and $\phi$, since $s\pi_n(\pr_Y)$ is an isomorphism.
\end{proof}

\begin{remarque}
The proof of Theorem \ref{TheoremCMFStrat} - and of Proposition \ref{PropositionQETopPTopQ} on which it depends - generalizes without difficulty to the category $\StrStrat$ of strongly stratified spaces or even to a category of strongly stratified simplicial sets. In particular, there exists a cofibrantly generated model category of strongly stratified spaces.
\end{remarque}

\appendix

\section{A property of some colimits over particular posets}
\label{AppendixColimits}

\begin{defin}\label{DefinitionWellBehaved}
Let $\D$ be a poset and $G\colon \D\to \Set$ be a functor. We say that $\D$ is almost filtered with respect to $G$ if, 
\begin{enumerate}
\item 
%\begin{itemize}
%\item
 for all $d\in \D$ and all $d_1,d_2,d_3\in \D$ such that $d_1\leq d,d_2$, and $d_3\leq d_2,d$, 
%\item
if there exist $x,y\in G(d)$ and $x_i\in G(d_i)$, $i\in \{1,2,3\}$, such that $G(d_1\to d)(x_1)=x$, $G(d_1\to d_2)(x_1)=x_2$, $G(d_3\to d_2)(x_3)=x_2$ and $G(d_3\to d)(x_3)=y$, then
there exists $e\in \D$ such that the following relations are satisfied,
\begin{equation}\label{EquationZigzagShapeN2}
\begin{tikzcd}
d
&d_1
\arrow{dr}
\arrow{l}
\arrow{r}
&d_2
&d_3
\arrow{dl}
\arrow{l}
\arrow{r}
&d
\\
&&e
\arrow{ull}
\arrow{u}
\arrow{urr}
\end{tikzcd}
\end{equation}
%\end{itemize}
\item
\begin{itemize}
\item for all $n\geq 2$, and all $(2n+3)$-tuple in $\D$, $(d_0,\dots,d_{2n+2})$, with $d_0=d_{2n+2}=d$, and $d_{2k+1}\leq d_{2k},d_{ 2k+2}$, for all $0\leq k\leq n$, 
\item for all $(2n+3)$-tuple $(x_0,\dots,x_{2n+2})$, such that $x_i\in G(d_i)$, for $0\leq i\leq 2n+2$, and $G(d_{2k+1}\to d_{2k})(x_{2k+1})=x_{2k}$ and $G(d_{2k+1}\to d_{2k+2})(x_{2k+1})=x_{2k+2}$ for all $0\leq k\leq n$.
\end{itemize}
there exists :
\begin{itemize}
\item a $(2n+3)$-tuple of elements in $\D$, $(d'_0,\dots,d'_{2n+2})$ and $e$ an element in $\D$, such that $d'_0=d'_{2n+1}=d$, and such that the relation depicted in the following diagram are satisfied
\begin{equation}\label{EquationAlmostFilteredPoset}
\begin{tikzcd}
&&&\phantom{X}e\phantom{X}
\\
&d'_1
\arrow{dl}
\arrow{r}
\arrow{urr}
\arrow{dd}
&d'_2
\arrow{ur}
\arrow{dd}
&d'_3
\arrow{u}
\arrow{l}
\arrow{r}
\arrow{dd}
&\dots
\arrow[phantom]{ul}{\dots}
&d'_{2n+1}
\arrow{ull}
\arrow{l}
\arrow{dd}
\arrow{dr}
\\
d
&&&&&&d
\\
&d_1
\arrow{ul}
\arrow{r}
&d_2
&d_3
\arrow{l}
\arrow{r}
&\dots
&\arrow{l}
d_{2n+1}
\arrow{ur}
\end{tikzcd}
\end{equation}
\item a $(2n+3)$-tuple $(x'_0,\dots,x'_{2n+2})$ such that $x'_i\in G(d'_i)$ for all $0\leq i\leq 2n+2$ and $G(d'_{2k+1}\to d'_{2k})(x'_{2k+1})=x'_{2k}$ and $G(d'_{2k+1}\to d'_{2k+2})(x'_{2k+1})=x'_{2k+2}$ for all $0\leq k\leq n$.
\end{itemize}
such that $G(d'_i\to d_i)(x'_i)=x_i$ for all $0\leq i\leq 2n+2$.
%\begin{itemize}
%\item for any $n\geq 2$, and for any $(2n+3)$-tuple in $\D$, $(d_0,\dots,d_{2n+1})$, such that  $d_0=d_{2n+2}=d$, and $d_{2k+1}\leq d_{2k},d_{ 2k+2}$, for all $0\leq k\leq n$,
%\item there exists a particular choice of $(d'_1,\dots,d'_{2n+1})$ and $e\in \D$, satisfying the relations of (\ref{EquationAlmostFilteredPoset}),
%\end{itemize}
%such that :
%\begin{itemize}
%\item for all $(2n+3)$-tuple $(x_0,\dots,x_{2n+3})$, with $x_i\in G(d_i)$ for all $0\leq i\leq 2n+2$, and $G(d_{2k+1}\to d_{2k})(x_{2k+1})=x_{2k}$ and $G(d_{2k+1}\to d_{2k+2})(x_{2k+1})=x_{2k+2}$ for all $0\leq k\leq n$,
%\item there exists a $(2n+1)$-tuple $(x'_1,\dots,x'_{2n+1})$ with $x'_i\in G(d'_i)$ and $G(d'_i\to d_i)(x'_i)=x_i$, pour tout $1\leq i\leq 2n+1$.
%\end{itemize}
\end{enumerate}
\end{defin}

\begin{prop}\label{PropositionColimiteMono} Let $\D$ be a poset, and $G\colon \D\to \Set$ a functor sending morphisms of $\D$ to monomorphisms in $\Set$. If $\D$ is almost filtered with respect to $G$, then for any $d\in \D$, the map 
\begin{equation*}
G(d)\to\colim\limits_{d\in \D}(G)
\end{equation*}
is a monomorphism.
\end{prop}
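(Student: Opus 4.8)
The plan is to describe an element of the colimit $\colim_{d\in\D}G$ concretely as an equivalence class of elements of the $G(d)$'s, and to unwind what it means for two elements $x\in G(d)$, $y\in G(d)$ to become equal in the colimit. Since $\D$ is a poset (so the nerve of $\D$ is $2$-coskeletal in a strong sense and there are no nontrivial automorphisms) and $G$ takes values in $\Set$, the colimit is a quotient of $\coprod_{d}G(d)$ by the equivalence relation generated by $x\sim G(d'\to d)(x)$ for $d'\le d$. Concretely, $x$ and $y$ are identified in the colimit if and only if there is a finite zigzag: elements $d=d_0,d_1,\dots,d_{2n+2}=d$ with $d_{2k+1}\le d_{2k},d_{2k+2}$, and elements $x_i\in G(d_i)$ with $x_0=x$, $x_{2n+2}=y$, such that $G(d_{2k+1}\to d_{2k})(x_{2k+1})=x_{2k}$ and $G(d_{2k+1}\to d_{2k+2})(x_{2k+1})=x_{2k+2}$ for all $k$. (Here I use that all transition maps are monos, which lets me reduce any zigzag through arbitrary spans to one of this alternating "down-up" shape and also lets me always take a common refinement downward.) So the statement to prove is: if such a zigzag witnesses $x\sim y$ with $x,y\in G(d)$, then already $x=y$ in $G(d)$.

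First I would treat the base case $n=1$ (a zigzag of length one span–span, i.e. the situation of condition (1) in Definition \ref{DefinitionWellBehaved}) as a warm-up: condition (1) produces an $e\in\D$ with maps to $d$ through both ends of the zigzag, and chasing $x$ and $y$ back and forth — using that all the $G$-maps are injective — forces $x=y$. Then for the general case I would induct on $n$. Given a zigzag of length $n\ge 2$ witnessing $x\sim y$, apply condition (2) of Definition \ref{DefinitionWellBehaved}: it produces a second "parallel" zigzag $(d'_i)$ over the same $d$ (with $d'_0=d'_{2n+1}=d$ — note this is shorter, effectively of length $n-1$ after folding, because of the apex $e$ connecting $d'_1$ and $d'_{2n+1}$), together with elements $x'_i\in G(d'_i)$ lying over the original $x_i$, hence in particular $x'_0$ lying over $x=x_0$ and $x'_{2n+2}$ lying over $y=x_{2n+2}$. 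The apex $e$ in diagram (\ref{EquationAlmostFilteredPoset}) collapses the two outer vertices of the primed zigzag, so reindexing gives a genuine zigzag of length $n-1$ (again of the alternating form, possibly after another monomorphism-driven normalization) witnessing $x'_0\sim x'_{2n+2}$; by the induction hypothesis $x'_0=x'_{2n+2}$ in $G(d)$, and then injectivity of $G(d'_0\to d)$ together with $G(d'_0\to d)(x'_0)=x$ and the analogous relation for $y$ forces $x=y$.

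The one genuinely delicate point, and the step I expect to be the main obstacle, is the very first reduction: justifying that \emph{every} relation in the colimit can be represented by a zigzag of the specific alternating down–up shape appearing in Definition \ref{DefinitionWellBehaved}, with \emph{both} endpoints equal to the fixed $d$ and of \emph{even} total length. This uses two facts which must be spelled out carefully: (a) because $\D$ is a poset, any span $d_1\le d_2\ge d_3$ composed with anything can be "straightened," and any two composable morphisms compose to a single one, so an arbitrary finite zigzag in $\D$ can be rewritten as an alternating one; and (b) because all transition maps $G(\text{-}\to\text{-})$ are monomorphisms, adding or deleting a "backtrack" $d_i\to d_{i-1}\to d_i$ does not change whether two elements are related, which lets one normalize the length and endpoints. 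I would isolate this as a preliminary lemma ("the colimit relation on $\coprod_d G(d)$ is computed by alternating zigzags over any chosen basepoint $d$"), prove it by a short induction on zigzag length using that $G$ is monomorphism-valued, and then the induction on $n$ using conditions (1) and (2) of almost-filteredness goes through cleanly as sketched above.
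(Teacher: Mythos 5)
Your plan matches the paper's proof: present the colimit as a quotient by zigzag-generated relations, normalize to the alternating shape (\ref{Zigzag1}) with both feet at $d$ using that $G$ is monomorphism-valued, then shorten the zigzag via condition (2) and finish the short cases with condition (1) and injectivity. The only cosmetic difference is that in diagram (\ref{EquationAlmostFilteredPoset}) every odd primed vertex maps to the apex $e$, so a single application of condition (2) collapses any $n\geq 2$ directly to the case $n=1$ rather than to $n-1$; your step-by-step induction terminates just the same.
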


\begin{proof}
Recall the following
\begin{equation*}
\colim_{\D}G=\left(\coprod_{d\in \D}G(d)\right)/{\sim}
\end{equation*}
Where ${\sim}$ is the equivalence relation generated by $x\sim y$ if $x\in G(c)$, $y\in G(d)$ and $G(c\to d)(x)=y$.
It suffices to prove that if $d\in \D$ and $x,y$ are two elements of $G(d)$, then 
\begin{equation}
x\sim y \Leftrightarrow x=y.
\end{equation} Assume that $x\sim y$. Then, there exists a zigzag of maps in $\D$, realizing this equivalence, of one of the following three forms
\begin{equation}\label{Zigzag1}
\begin{tikzcd}
d
&d_1
\arrow{l}
\arrow{r}
&\dots
&d_{2n+1}
\arrow{l}
\arrow{r}
&d
\end{tikzcd}
\end{equation}
\begin{equation}\label{Zigzag2}
\begin{tikzcd}
d
&d_1
\arrow{l}
\arrow{r}
&d_2
&\dots
\arrow{l}
\arrow{r}
&d_{2n}
&d
\arrow{l}
\end{tikzcd}
\end{equation}
\begin{equation}\label{Zigzag3}
\begin{tikzcd}
d
\arrow{r}
&d_1
&\dots
\arrow{l}
\arrow{r}
&d_{2n+1}
&d
\arrow{l}
\end{tikzcd}
\end{equation}
Composing the first map of (\ref{Zigzag2}) with $d\to d_{2n}$, and omitting the last gives a zigzag of shape (\ref{Zigzag1}) between $G(d\to d_{2n})(x)$ and $G(d\to d_{2n})(y)$. Since $G(d\to d_{2n})$ is a monomorphism, showing that $x=y$ is equivalent to showing that the two former elements of $G(d_{2n})$ are equal. Similarly, omitting the first map in (\ref{Zigzag3}) and adding the map $d\to d_1$ at the end gives a zigzag of shape (\ref{Zigzag1}) between $G(d\to d_1)(x)$ and $G(d\to d_1)(y)$. Since $G(d\to d_1)$ is a monomorphism by assumption, those elements of $G(d_1)$ are equal if and only if $x=y$. In particular, it is enough to show that the existence of a zigzag of shape (\ref{Zigzag1}) realizing an equivalence $x\sim y$, implies that $x=y$. Fix such a zigzag, and elements $x_i\in G(d_i)$, for all $0\leq i\leq 2n+2$ (take $x_0=x$, and $x_{2n+2}=y$), such that $G(d_{2k+1}\to d_{2k})(x_{2k+1})=x_{2k}$ and $G(d_{2k+1}\to d_{2k+2})(x_{2k+1})=x_{2k+2}$.

Assume that $n\geq 2$. Since $\D$ is almost filtered with respect to $G$, there exist  $e\in \D$ and a tuple, $(d'_1,\dots,d'_{2n+1})$ such that the relations of (\ref{EquationAlmostFilteredPoset}) hold. Furthermore, there exists $(x'_1,\dots,x'_{2n+1})$ such that the relation of Definition \ref{DefinitionWellBehaved} are satisfied. The commutativity of (\ref{EquationAlmostFilteredPoset}) guarantees that $x_e=G(d'_i\to e)(x'_i)$ is well defined and independant of the choice of $i$. In particular, we have a zigzag of the form
\begin{equation*}
\begin{tikzcd}
d
&d'_1
\arrow{l}
\arrow{r}
&e
&d'_{2n+1}
\arrow{l}
\arrow{r}
&d
\end{tikzcd}
\end{equation*}
realizing the equivalence $x\sim y$. This is exactly a zigzag of shape (\ref{Zigzag1}), with $n=1$.

Assume that $n=1$. Since $\D$ is almost filtered with respect to $G$, there exists $e\in \D$ such that there is a diagram of shape (\ref{EquationZigzagShapeN2}). Taking $x_e=G(d_1\to e)(x_1)=G(d_3\to e)\in G(e)$, we get a zigzag of shape (\ref{Zigzag1}), with $n=0$.

Assume that $n=0$. This means that there exists $d_1\in \D$, and $x_1$ such that $G(d_1\to d)(x_1)=x$ and $G(d_1\to d)(x_1)=y$. Clearly, this implies that $x=y$.
\end{proof}

\begin{remarque}
At fist glance, one might think that the hypothesis in Proposition \ref{PropositionColimiteMono} that $\D$ is almost filtered with respect to $G$ is unnecessary , and that the conclusion should hold for any functor $G\colon \D\to \Set$ sending morphisms of $\D$ to monomorphisms, as long as $\D$ is a poset. But this is not true. Consider the poset $\D$ with four objects $a,b,c$ and $d$ and relations $a\to c$, $a\to d$, $b\to c$, $b\to d$, and the functor $G\colon \D\to \Set$ defined by $G(a)=G(b)=G(c)=G(d)=\{0,1\}$, and $G(a\to c)=G(a\to d)=G(b\to c)=\Id$, and $G(b\to d)= 1-\Id$. Then, $\colim_{\D}G=\{*\}$.
\end{remarque}

\begin{remarque}\label{RemarqueSeenAsSet}
One can see any (filtered) simplicial set as a set by considering its set of simplices. Indeed, if $(X,\varphi_X)$ is a filtered simplicial set, and $\Delta^{\varphi}\in N(P)$ is a simplex of $N(P)$, write $X_{\Delta^{\varphi}}$ for the set of $\Delta^{\varphi}$-simplices of $X$ (equivalently, for the set of filtered maps $\Hom_{\sS_P}(\Delta^{\varphi},(X,\varphi_X))$. The set of simplices of $X$ is then the disjoint union $U(X)=\coprod\limits_{\Delta^{\varphi}\in\Delta(P)} X_{\Delta^{\varphi}}$. Notice that a map of (filtered) simplicial sets $(X,\varphi_X)\to(Y,\varphi_Y)$ is a monomorphism if and only if the corresponding map of sets $U(X)\to U(Y)$ is a monomorphism.
In the same way, one can see any functor with values in the category of (filtered) simplicial sets, as a functor with values in $\Set$ by composing it with the forgetful functor $U$. Further, let $\C$ be a small category and $F\colon \C\to \sS_P$ be a functor, for all $\Delta^{\varphi}\in\Delta(P)$, one defines the functor $F_{\Delta^{\varphi}}\colon \C\to\sS_P$ by taking $F_{\Delta^{\varphi}}(c)=F(c)_{\Delta^{\varphi}}$ for all $c\in\C$. One then has:
\begin{equation*}
\left(\colim\limits_{c\in\C}F(c)\right)_{\Delta^{\varphi}}=\colim\limits_{c\in\C}\left(F_{\Delta^{\varphi}}(c)\right)
\end{equation*}
Equivalently, one says that the limits and colimits in $\sS_P$ are computed degree-wise. Now since $U\circ F$ is a colimit of the $F_{\Delta^{\varphi}}$, and since colimits commute with each-other, one has :
\begin{equation*}
\colim\limits_{\C} (U\circ F) = U\left(\colim_{\C} F\right).
\end{equation*}
In particular, for the matter of studying colimits of a functor with values in $\sS_P$, one can see it as a functor with values in $\Set$ by composing with the forgetful functor $U$. Furthermore, since $U$ reflects monomorphisms, if Proposition \ref{PropositionColimiteMono} applies to the functor with values in $\Set$, it will also apply to the original functor.
\end{remarque}

\begin{lemme}\label{LemmeCAlmostFiltered}
Let $F$ be an object in $\Diag_P$ for some poset $P$. Then $\mathcal{C}$ is almost filtered with respect to $F\otimes R(P)$, seen as a functor to $\Set$. (See Definition \ref{DefinitionColim}.)

\end{lemme}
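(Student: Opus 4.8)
The plan is to unwind both clauses of Definition \ref{DefinitionWellBehaved} and to exhibit the required objects explicitly, the only real input being the combinatorics of faces of the simplicial complex $N(P)$. Recall that an object of $\mathcal{C}$ is a pair of non-degenerate simplices $\Delta^{\psi}\subseteq\Delta^{\varphi}$ of $N(P)$, that there is a (unique) morphism $(\Delta^{\varphi_1},\Delta^{\psi_1})\to(\Delta^{\varphi_2},\Delta^{\psi_2})$ precisely when $\Delta^{\varphi_2}\subseteq\Delta^{\varphi_1}$ and $\Delta^{\psi_1}\subseteq\Delta^{\psi_2}$, and that $G:=U\circ(F\otimes R(P))$ sends $(\Delta^{\varphi},\Delta^{\psi})$ to the set of pairs $(\sigma,\tau)$ with $\sigma$ a simplex of $F(\Delta^{\varphi})$ and $\tau$ a simplex of $\Delta^{\psi}$ of the same dimension; the transition maps act by $F$ of the relevant inclusion on the first coordinate and by a face inclusion $\Delta^{\psi_1}\hookrightarrow\Delta^{\psi_2}$ on the second.

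The one non-formal ingredient I would isolate first is the following observation about compatible families along a zigzag. Suppose $d_0\geq d_1\leq d_2\geq\cdots$ is a zigzag in $\mathcal{C}$, write $d_i=(\Delta^{\varphi_i},\Delta^{\psi_i})$, and suppose it carries a compatible family $x_i=(\sigma_i,\tau_i)\in G(d_i)$. Then all $\tau_i$ have a common dimension $m$, and since along the zigzag the second-coordinate transition maps are induced by face inclusions among the $\Delta^{\psi_i}$, the set $V$ of vertices hit by $\tau_i$ is one and the same subset of $P$ for every $i$. Writing $\Delta^{\rho}$ for the corresponding simplex, one gets $\Delta^{\rho}\subseteq\Delta^{\psi_i}\subseteq\Delta^{\varphi_i}$ for every $i$, and each $\tau_i$ is obtained by composing one fixed simplex $\tau'\colon\Delta^m\to\Delta^{\rho}$ with the inclusion $\Delta^{\rho}\hookrightarrow\Delta^{\psi_i}$.

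With this in hand the two clauses are direct. For clause (1), given $d,d_1,d_2,d_3$ with $d_1\leq d,d_2$ and $d_3\leq d,d_2$, I would let $e$ be the pair whose first simplex is the face of $N(P)$ spanned by the union of the vertex sets of the first simplices of $d$ and of $d_2$ — a genuine face, since both of these are faces of the first simplex of $d_1$ — and whose second simplex is spanned by the union of the vertex sets of the second simplices of $d_1$ and of $d_3$ — a genuine face, since both are faces of the second simplex of $d$. A short check of inclusions then shows $e\in\mathcal{C}$ and that it fits into the pattern (\ref{EquationZigzagShapeN2}); no use of the elements is needed here. For clause (2), given the zigzag $d_0,\dots,d_{2n+2}$ with $d_0=d_{2n+2}=d$ together with its compatible family $x_0,\dots,x_{2n+2}$, I would take $\Delta^{\rho}$ as in the previous paragraph, set $e=(\Delta^{\rho},\Delta^{\rho})$, keep $d'_0=d'_{2n+2}=d$, and for $1\leq i\leq 2n+1$ set $d'_i=(\Delta^{\varphi_i},\Delta^{\rho})$ and $x'_i=(\sigma_i,\tau')$. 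The inclusions $\Delta^{\rho}\subseteq\Delta^{\varphi_i}$ supply the morphisms $d'_i\to e$ and $d'_i\to d_i$; the original zigzag on the first coordinates supplies the zigzag among the $d'_i$; and the mutual compatibility of the $x'_i$, the equalities $G(d'_i\to d_i)(x'_i)=x_i$, and the independence of $G(d'_i\to e)(x'_i)$ from $i$ all follow formally from functoriality of $F$ together with the fact that $\tau'$ is the common corestriction of the $\tau_i$. This is exactly the data required by (\ref{EquationAlmostFilteredPoset}).

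I expect the routine-but-delicate part to be clause (2): keeping straight the three layers of data — the new objects $d'_i$, the element $e$, and the lifted family $x'_i$ — and verifying every relation of (\ref{EquationAlmostFilteredPoset}), including the compatibilities at the two ends $d'_0=d'_{2n+2}=d$. Once the common face $\Delta^{\rho}$ is identified this is mechanical, and in particular it uses nothing about $F$ beyond functoriality; the cofibrancy of $F$ plays no role here and enters only later, via the monomorphism condition, when Proposition \ref{PropositionColimiteMono} is actually applied in Lemma \ref{LemmeCofibrantDiagramHomeoOntoImage}.
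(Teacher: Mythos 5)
Your proof is correct and follows essentially the same route as the paper's: in both arguments the key observation is that the second coordinates of a compatible family along a zigzag all arise from one common simplex of $N(P)$, which forces the auxiliary object $e$ to exist, after which everything is a mechanical check of face inclusions. Your specific choices differ harmlessly from the paper's (which takes $e=(\Delta^{\varphi'},\Delta^{\psi}\cap\Delta^{\psi_2})$ with $\Delta^{\varphi'}$ the span of $\Delta^{\varphi}\cup\Delta^{\varphi_2}$ in clause (1), and $e=(\cap_i\Delta^{\varphi_i},\cap_i\Delta^{\psi_i})$ with $d'_i=(\Delta^{\varphi_i},\cap_i\Delta^{\psi_i})$ in clause (2)), and you handle the endpoint condition $d'_0=d'_{2n+2}=d$ more carefully than the paper does.
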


\begin{proof}
Consider a diagram in $\C$ of shape
%\begin{equation*}
%\begin{tikzcd}[column sep = small]
%F(\Delta^{\varphi})\otimes\Delta^{\psi}
%&F(\Delta^{\varphi_1})\otimes\Delta^{\psi_1}
%\arrow{l}
%\arrow{r}
%&F(\Delta^{\varphi_2})\otimes\Delta^{\psi_2}
%&F(\Delta^{\varphi_3})\otimes\Delta^{\psi_3}
%\arrow{l}
%\arrow{r}
%&F(\Delta^{\varphi})\otimes\Delta^{\psi}
%\end{tikzcd}
%\end{equation*}
\begin{equation*}
\begin{tikzcd}[column sep = small]
(\Delta^{\varphi},\Delta^{\psi})
&(\Delta^{\varphi_1},\Delta^{\psi_1})
\arrow{l}
\arrow{r}
&(\Delta^{\varphi_2},\Delta^{\psi_2})
&(\Delta^{\varphi_3},\Delta^{\psi_3})
\arrow{l}
\arrow{r}
&(\Delta^{\varphi},\Delta^{\psi})
\end{tikzcd}
\end{equation*}
And simplices $\sigma,\tau\in F(\Delta^{\varphi})\otimes\Delta^{\psi}$, and $\sigma_i\in F(\Delta^{\varphi_i})\otimes\Delta^{\psi_i}$, for $i\in \{1,2,3\}$, related to each other as in Definition \ref{DefinitionWellBehaved}. Write each of the $\sigma_i$ as a pair $(\sigma_i^F,\sigma_i^P)$, where $\sigma_i^F\in F(\Delta^{\varphi_i})$ and $\sigma_i^P\in \Delta^{\psi_i}$. By construction of $F\otimes R(P)$, we have $\sigma^P=\sigma_i^P=\tau^P$, for all $1\leq i\leq 3$, and in particular, $\Delta^{\psi}\cap\Delta^{\psi_2}$ is not empty, write $\Delta^{\psi'}$ for this intersection. Furthermore, since $\Delta^{\varphi_1}$ contains both $\Delta^{\varphi}$ and $\Delta^{\varphi_2}$, there exists a smallest simplex of $N(P)$ - write it $\Delta^{\varphi'}$ - containing $\Delta^{\varphi}\cup \Delta^{\varphi_2}$. We then get the following factorization

\begin{equation*}
\begin{tikzcd}[column sep = small]
(\Delta^{\varphi},\Delta^{\psi})
&(\Delta^{\varphi_1},\Delta^{\psi_1})
\arrow{l}
\arrow{r}
\arrow{dr}
&(\Delta^{\varphi_2},\Delta^{\psi_2})
&(\Delta^{\varphi_3},\Delta^{\psi_3})
\arrow{l}
\arrow{r}
\arrow{dl}
&(\Delta^{\varphi},\Delta^{\psi})
\\
&&(\Delta^{\varphi'},\Delta^{\psi'})
\arrow{u}
\arrow{ull}
\arrow{urr}
\end{tikzcd}
\end{equation*}
Next, assume that we are given a family of $(\Delta^{\varphi_i},\Delta^{\psi_i})$, together with $\sigma_i\in F(\Delta^{\varphi_i})\otimes \Delta^{\psi_i}$, $0\leq i\leq 2n+2$, as in Definition \ref{DefinitionWellBehaved}. Then as earlier, write $\sigma_i=(\sigma_i^F,\sigma_i^P)$. One must have $\sigma_i^P=\sigma_j^P$, for all $0\leq i,j\leq 2n+2$, and so, the intersection $\cap_i \Delta^{\psi_i}$ must be non-empty, since it contains $\sigma_i^P$, for all $i$, write it $\Delta^{\psi'}$. Then, the intersection $\cap_i\Delta^{\varphi_i}$ is also non-empty since it contains $\Delta^{\psi'}$, write it $\Delta^{\varphi'}$. To obtain a diagram of shape (\ref{EquationAlmostFilteredPoset}), pick $d'_i= (\Delta^{\varphi_i},\Delta^{\psi'})$, and $e=(\Delta^{\varphi'},\Delta^{\psi'})$. As mentioned earlier, the $\sigma_i^P$ can already be seen as elements of $\Delta^{\psi'}$, so one can pick $x'_i=x_i$.
\end{proof}

\bibliographystyle{alpha}
\bibliography{biblio}

\end{document}